\DeclareFontFamily{OMX}{MnSymbolE}{}%
\DeclareSymbolFont{MnLargeSymbols}{OMX}{MnSymbolE}{m}{n}%
\DeclareFontShape{OMX}{MnSymbolE}{m}{n}{%
  <-6>  MnSymbolE5%
  <6-7>  MnSymbolE6%
  <7-8>  MnSymbolE7%
  <8-9>  MnSymbolE8%
  <9-10> MnSymbolE9%
  <10-12> MnSymbolE10%
  <12->   MnSymbolE12%
}{}%
\DeclareFontShape{OMX}{MnSymbolE}{b}{n}{%
  <-6>  MnSymbolE-Bold5%
  <6-7>  MnSymbolE-Bold6%
  <7-8>  MnSymbolE-Bold7%
  <8-9>  MnSymbolE-Bold8%
  <9-10> MnSymbolE-Bold9%
  <10-12> MnSymbolE-Bold10%
  <12->   MnSymbolE-Bold12%
}{}%
\let\llangle\@undefined%
\let\rrangle\@undefined%
\DeclareMathDelimiter{\llangle}{\mathopen}
{MnLargeSymbols}{'164}{MnLargeSymbols}{'164}%
\DeclareMathDelimiter{\rrangle}{\mathclose}
{MnLargeSymbols}{'171}{MnLargeSymbols}{'171}%
\newcommand{\op}{\mathrm{op}}
\newcommand{\CC}{\mathbb{C}}
\newcommand{\ZZ}{\mathbb{Z}}
\NewDocumentCommand{\FF}{O{p}}{\mathbb{F}_{#1}}
\newcommand{\category}[1]{\mathcal{#1}}
\newcommand{\A}{\category{A}}%
\newcommand{\C}{\category{C}}%
\newcommand{\T}{\category{T}}%
\NewDocumentCommand{\ob}{s m}{\operatorname{ob}\left(#2\right)}
\NewDocumentCommand{\set}{m o}{\left\{#1\right\IfValueT{#2}{\vert #2}\}}
\NewDocumentCommand{\Out}{s m}{\operatorname{Out}\IfBooleanTF{#1}{(#2)}{#2}}
\NewDocumentCommand{\Aut}{s m}{\operatorname{Aut}\IfBooleanTF{#1}{(#2)}{#2}}
\NewDocumentCommand{\Pic}{s m}{\operatorname{Pic}\IfBooleanTF{#1}{(#2)}{#2}}
\NewDocumentCommand{\Yoneda}{o o}{\mathbf{h}\IfValueT{#1}{_{#1}}}
\NewDocumentCommand{\add}{s m}{\operatorname{add}\IfBooleanTF{#1}{(#2)}{#2}}
\NewDocumentCommand{\smd}{s m}{\operatorname{smd}\IfBooleanTF{#1}{(#2)}{#2}}
\NewDocumentCommand{\thick}{s m}{\operatorname{thick}\IfBooleanTF{#1}{(#2)}{#2}}
\NewDocumentCommand{\proj}{s m}{\operatorname{proj}\IfBooleanTF{#1}{(#2)}{#2}}
\NewDocumentCommand{\mmod}{s m}{\operatorname{mod}\IfBooleanTF{#1}{(#2)}{#2}}
\NewDocumentCommand{\smmod}{s m}{\operatorname{\underline{mod}}\IfBooleanTF{#1}{(#2)}{#2}}
\NewDocumentCommand{\rad}{s m}{\operatorname{rad}\IfBooleanTF{#1}{(#2)}{#2}}
\NewDocumentCommand{\Mod}{s m}{\operatorname{Mod}\IfBooleanTF{#1}{(#2)}{#2}}
\NewDocumentCommand{\Spec}{s m}{\operatorname{Spec}\IfBooleanTF{#1}{(#2)}{#2}}
\NewDocumentCommand{\twBim}{O{1} m O{1}}{{}_{#1}{#2}_{#3}}
\NewDocumentCommand{\Gproj}{s m}{\operatorname{Gproj}\IfBooleanTF{#1}{(#2)}{#2}}
\NewDocumentCommand{\stableGproj}{s m}{\underline{\operatorname{Gproj}}\IfBooleanTF{#1}{(#2)}{#2}}
\NewDocumentCommand{\CM}{s m}{\operatorname{CM}\IfBooleanTF{#1}{(#2)}{#2}}
\NewDocumentCommand{\stableCM}{s m}{\underline{\operatorname{CM}}\IfBooleanTF{#1}{(#2)}{#2}}
\NewDocumentCommand{\dgSingCat}{o o m}{\operatorname{D}\IfValueTF{#1}{_{\mathrm{#1}}}{_{\mathrm{sg}}}\IfValueT{#2}{_{\mathrm{#2}}}(#3)_{\mathrm{dg}}}
\NewDocumentCommand{\DerCat}{o o m}{\operatorname{D}\IfValueT{#1}{^{\mathrm{#1}}}\IfValueT{#2}{_{\mathrm{#2}}}(#3)}
\NewDocumentCommand{\SingCat}{o o m}{\operatorname{D}\IfValueTF{#1}{_{\mathrm{#1}}}{_{\mathrm{sg}}}\IfValueT{#2}{_{\mathrm{#2}}}(#3)}
\NewDocumentCommand{\Ch}{o m}{\operatorname{C}\IfValueT{#1}{^{\mathrm{#1}}}(#2)}
\NewDocumentCommand{\KCh}{o
  m}{\operatorname{K}\IfValueT{#1}{^{\mathrm{#1}}}(#2)}
\NewDocumentCommand{\dgZ}{o m}{\operatorname{Z}\IfValueT{#1}{^{#1}}(#2)}
\NewDocumentCommand{\dgH}{O{\bullet} m}{H^{#1}(#2)}
\NewDocumentCommand{\dgCh}{o m}{\operatorname{C}_{\mathrm{dg}}\IfValueT{#1}{^{\mathrm{#1}}}(#2)}
\NewDocumentCommand{\AiMod}{s m}{A_\infty\operatorname{-Mod}\IfBooleanTF{#1}{(#2)}{#2}}
\NewDocumentCommand{\dgModdg}{s m}{\operatorname{dgMod}_{\mathrm{dg}}\IfBooleanTF{#1}{(#2)}{#2}}
\NewDocumentCommand{\dgMod}{s m}{\operatorname{dgMod}\IfBooleanTF{#1}{(#2)}{#2}}
\NewDocumentCommand{\coh}{s m}{\operatorname{coh}\IfBooleanTF{#1}{(#2)}{#2}}
\NewDocumentCommand{\depth}{s m}{\operatorname{depth}\IfBooleanTF{#1}{(#2)}{#2}}
\NewDocumentCommand{\Ho}{s m}{\operatorname{Ho}\left(#2\right)}
\NewDocumentCommand{\cone}{s m}{\operatorname{cone}\left(#2\right)}
\newcommand{\Hmo}{\operatorname{Hmo}}
\NewDocumentCommand{\id}{s o}{\mathbf{1}\IfValueT{#2}{_{#2}}}
\NewDocumentCommand{\Ext}{s o m m
  o}{\operatorname{Ext}\IfValueT{#2}{_{#2}}\IfValueT{#5}{^{#5}}\!\left(#3,#4\right)}
\NewDocumentCommand{\sExt}{s o m m o}{\underline{\operatorname{Ext}}\IfValueT{#2}{_{#2}}\IfValueT{#5}{^{#5}}\left(#3,#4\right)}
\NewDocumentCommand{\Hom}{s o m m o}{\operatorname{Hom}\IfValueT{#2}{_{#2}}\IfValueT{#5}{^{#5}}\!\left(#3,#4\right)}
\NewDocumentCommand{\Map}{s o m m o}{\operatorname{Map}\IfValueT{#2}{_{#2}}\IfValueT{#5}{^{#5}}\!\left(#3,#4\right)}
\NewDocumentCommand{\sHom}{s o m m o}{\underline{\operatorname{Hom}}\IfValueT{#2}{_{#2}}\IfValueT{#5}{^{#5}}\!\left(#3,#4\right)}
\NewDocumentCommand{\dgHom}{s o m m o}{\operatorname{hom}\IfValueT{#2}{_{#2}}\IfValueT{#5}{^{#5}}\!\left(#3,#4\right)}
\NewDocumentCommand{\End}{s o m
  o}{\operatorname{End}\IfValueT{#2}{_{#2}}\IfValueT{#4}{^{#4}}\!\left(#3\right)}
\NewDocumentCommand{\sEnd}{s o m o}{\underline{\operatorname{End}}\IfValueT{#2}{_{#2}}\IfValueT{#4}{^{#4}}\!\left(#3\right)}
\NewDocumentCommand{\RHom}{s o m m
  o}{\operatorname{RHom}\IfValueT{#2}{_{#2}}\IfValueT{#5}{^{#5}}\!\left(#3,#4\right)}
\NewDocumentCommand{\REnd}{s o m o}{\mathbf{R}\!\operatorname{End}\IfValueT{#2}{_{#2}}\IfValueT{#4}{^{#4}}\!\left(#3\right)}
\NewDocumentCommand{\dgFun}{s o m m o}{\operatorname{Fun}_{\mathrm{dg}}\IfValueT{#5}{^{#5}}\!\left(#3,#4\right)}
\NewDocumentCommand{\Sq}{o}{\operatorname{Sq}\IfValueT{#1}{\!\left(#1\right)}}
\NewDocumentCommand{\BK}{O{r} O{\bullet} O{*}}{E_{#1}^{#2,#3}}
\NewDocumentCommand{\HC}{s O{\bullet} O{*} m o}{\operatorname{C}^{#2\IfBooleanF{#1}{,#3}}\!\left(#4\IfValueT{#5}{,#5}\right)}
\NewDocumentCommand{\NHC}{s O{\bullet} O{*} m o}{\bar{\operatorname{C}}^{#2\IfBooleanF{#1}{,#3}}\!\left(#4\IfValueT{#5}{,#5}\right)}
\NewDocumentCommand{\HH}{s O{\bullet} O{*} m o}{\operatorname{HH}^{#2\IfBooleanF{#1}{,#3}}\!\left(#4\IfValueT{#5}{,#5}\right)}
\NewDocumentCommand{\SHC}{s O{\bullet} O{*} m o}{\operatorname{C}^{#2\IfBooleanF{#1}{,#3}}\!\left(#4\IfValueT{#5}{,#5}\right)}
\NewDocumentCommand{\SHH}{s O{\bullet} O{*} m o}{\operatorname{HH}^{#2\IfBooleanF{#1}{,#3}}\!\left(#4\IfValueT{#5}{,#5}\right)}
\NewDocumentCommand{\TateHH}{s O{\bullet} O{*} m o}{\underline{\operatorname{HH}}^{#2\IfBooleanF{#1}{,#3}}\!\left(#4\IfValueT{#5}{,#5}\right)}
\NewDocumentCommand{\TateExt}{s o m m o}{\underline{\operatorname{Ext}}\IfValueT{#2}{_{#2}}\IfValueT{#5}{^{#5}}\left(#3,#4\right)}
\NewDocumentCommand{\class}{s m}{\set{#2}}
\NewDocumentCommand{\simto}{s}{\IfBooleanTF{#1}{\stackrel{\sim}{\longrightarrow}}{\stackrel{\sim}{\to}}}
\NewDocumentCommand{\gLambda}{O{}}{\Lambda(\sigma,d)^{#1}}
\NewDocumentCommand{\AS}{s O{d} m m}{\operatorname{S}\IfBooleanT{#1}{^{\simeq}}_{#2}(#3,#4)}
\NewDocumentCommand{\opA}{O{\infty}}{\mathtt{A}_{#1}}
\newcommand{\abs}[1]{|#1|} 
\NewDocumentCommand{\uHH}{O{\bullet} m o}{\operatorname{HH}^{#1}\!\left(#2\IfValueT{#3}{,#3}\right)} 
\NewDocumentCommand{\uTateHH}{O{\bullet} m o}{\underline{\operatorname{HH}}^{#1}\!\left(#2\IfValueT{#3}{,#3}\right)} 
\NewDocumentCommand{\uHC}{O{\bullet} m o}{\operatorname{C}^{#1}\!\left(#2\IfValueT{#3}{,#3}\right)} 
\NewDocumentCommand{\RHC}{s O{\bullet} O{*} m o}{\operatorname{C}_{\CC[\imath^{\pm1}]}^{#2\IfBooleanF{#1}{,#3}}\!\left(#4\IfValueT{#5}{,#5}\right)} 
\DeclareRobustCommand\alto{n} 
\DeclareRobustCommand\bajo{} 
\newcommand*\circled[1]{\tikz[baseline=(char.base)]{
            \node[shape=circle,draw,inner sep=1pt] (char) {\scriptsize #1};}}
\newcounter{term}
  \let\mylabel\label
\newcommand{\mytag}[1]{%
  \begingroup 
    \refstepcounter{term}%
    \mylabel{#1}%
    \text{\circled{\theterm}}%
  \endgroup
}
\newcommand{\refcirc}[1]{\circled{\ref{#1}}}
\newcommand{\EulerDer}[1]{\bar{\delta}}
\newcommand{\EulerClass}[1]{\delta}
\numberwithin{equation}{subsection}
\newcounter{thmintro}%
\declaretheorem[sibling=thmintro,name=Theorem]{theorem intro}%
\declaretheorem[sibling=equation]{theorem}%
\declaretheorem[numbered=no,name=Theorem]{theorem*}%
\declaretheorem[sibling=equation]{proposition}%
\declaretheorem[sibling=equation]{lemma}%
\declaretheorem[sibling=equation]{corollary}%
\declaretheorem[style=definition,sibling=equation]{definition}%
\declaretheorem[style=definition,sibling=thmintro]{conjecture}%
\declaretheorem[style=remark,sibling=equation]{setting}%
\declaretheorem[style=remark,sibling=equation]{remark}%
\declaretheorem[style=remark,numbered=no,name=Remark]{remark*}%
\declaretheorem[style=remark,numbered=no,name=Example]{example*}%
\newcounter{rpage} \setcounter{rpage}{5}
\newcounter{trunco}
\DeclareSymbolFont{sfoperators}{OT1}{cmss}{m}{n}%
\DeclareSymbolFontAlphabet{\mathsf}{sfoperators}%
\def\operator@font{\mathgroup\symsfoperators}%
\let\oldpentagon\pentagon%
\renewcommand{\pentagon}{{\normalfont\oldpentagon}}
\title[The Donovan--Wemyss Conjecture]{The Donovan--Wemyss Conjecture via the
  Derived Auslander--Iyama Correspondence}
\subjclass[2020]{Primary 14E30; Secondary 13D03.}
\keywords{%
  Minimal model program; compound Du Val singularity; crepant resolution;
  contraction algebra; Hochschild cohomology; Auslander correspondence. }
\author[G.~Jasso]{Gustavo Jasso}%
\address[G.~Jasso]{%
  Lund University, %
  Centre for Mathematical Sciences, %
  Sölvegatan 18A, %
  22100 Lund, %
  Sweden%
}%
\email{gustavo.jasso@math.lu.se}%
\urladdr{https://gustavo.jasso.info}
\author[B.~Keller]{Bernhard Keller}%
\address[B.~Keller]{%
Université Paris Cité and Sorbonne Université, CNRS, IMJ-PRG, F-75013, France }%
\email{bernhard.keller@imj-prg.fr }%
\urladdr{https://webusers.imj-prg.fr/~bernhard.keller/}
\author[F.~Muro]{Fernando Muro}%
\address[F.~Muro]{%
  Universidad de Sevilla, %
  Facultad de Matemáticas, %
  Departamento de Álgebra, %
  Calle Tarfia s/n, %
  41012 Sevilla, %
  Spain%
}%
\email{fmuro@us.es}%
\urladdr{https://personal.us.es/fmuro/}
\NewDocumentCommand{\Con}{s}{\Lambda\IfBooleanT{#1}{_{\mathrm{con}}}}
\NewDocumentCommand{\DerCon}{s}{\mathbf{\Lambda}\IfBooleanT{#1}{_{\mathrm{con}}}}
\newcommand{\OO}{\mathcal{O}}
\newcommand{\NN}{\mathcal{N}}
\begin{document}

\sloppy

\begin{abstract}
  We provide an outline of the proof of the Donovan--Wemyss Conjecture in the
  context of the Homological Minimal Model Program for threefolds. The proof
  relies on results of August, of Hua and the second-named author, Wemyss, and
  on the Derived Auslander--Iyama Correspondence---a recent result by the first-
  and third-named authors.
\end{abstract}

\maketitle

\setcounter{tocdepth}{1}
\tableofcontents

\crefname{question}{Question}{Questions}
\Crefname{question}{Question}{Questions}
\crefname{conjecture}{Conjecture}{Conjectures}
\Crefname{conjecture}{Conjecture}{Conjectures}

\section*{Introduction}

We work over the field $\CC$ of complex numbers. A \emph{compound Du Val (=cDV)
  singularity} is a complete local hypersurface
\[
  R\cong\frac{\CC\llbracket x,y,z,t\rrbracket}{(f+tg)},
\]
where $\CC\llbracket x,y,z\rrbracket/(f)$ is a Kleinian surface singularity and
$g\in\CC\llbracket x,y,z,t\rrbracket$ is arbitrary. Introduced by Reid in the
early 1980s~\cite{Rei83}, cDV singularities form an important class of
three-dimensional singularities in birational geometry and play a significant
role in the Minimal Model Program (MMP) for threefolds~\cite[Sec.~5.3]{KM98} as
well as in the Homological MMP~\cite{Wem18}. We refer the reader
to~\cite[Ch.~1]{August19} and~\cite{Wem23} for introductions to the subject.

This note is concerned with the following geometric situation: Let $R$ be an
isolated cDV singularity and $p\colon X\to\Spec*{R}$ a crepant resolution, that
is $p$ is a proper birational map with smooth source such that the pullback of
the dualising sheaf of $\Spec*{R}$ along $p$ is the dualising sheaf of $X$. It
follows that $\Spec*{R}$ has a unique singular point $\mathfrak{m}$ and the
(reduced) exceptional fibre $p^{-1}(\mathfrak{m})=\bigcup_{i=1}^{n}C_i$ is a
union of curves, with $C_i\cong\mathbb{P}_\CC^1$~\cite[Lemma~3.4.1]{VdBer04}. To
these data, Donovan and Wemyss~\cite{DW16,DW19a} associate a (basic, connected)
finite-dimensional algebra $\Con*=\Con*(p)$, the \emph{contraction algebra of
  $p$}, which represents the functor of `simultaneous non-commutative
deformations' of the reduced exceptional fibre. By construction, $\Con*$ is a
$\CC^n$-augmented algebra, and hence in particular determines the number $n$ of
irreducible components of the exceptional fibre. The contraction algebra encodes
a surprising amount of information stemming from the given geometric setup. For
example, when $p$ contracts a single curve, the contraction algebra recovers
known invariants such as Reid's width~\cite{Rei83} and the Gopakumar--Vafa
invariants~\cite{Kat08}, see~\cite{Tod15}. Neither the dimension nor the Gabriel
quiver of contraction algebras suffice for differentiating cDV
singularities~\cite[Table~2]{DW16}. In fact, it is well-known that there are
continuous families of pairwise non isomorphic cDV singularities (that is `cDV
singularities have moduli'). Notwithstanding, at the risk of stating the
obvious, let us point out that the contraction algebra is equiped with crucial
data in the form of the multiplication law and that this law is essential in
recovering the above mentioned invariants. Equipped with their algebra
structure, contraction algebras distinguish between non-isomorphic isolated cDV
singularities that admit a crepant resolution in all known examples. These
considerations motivate the following remarkable conjecture.

\begin{conjecture}[Donovan and Wemyss {\cite{DW16}}]
  \label{conj:DW}
  Let $R_1$ and $R_2$ be isolated cDV singularities with crepant resolutions
  \[
    p_1\colon X_1\to \Spec*{R_1}\qquad\text{and}\qquad p_2\colon X_2\to
    \Spec*{R_2}.
  \]
  Then, the contraction algebras $\Con*(p_1)$ and $\Con*(p_2)$ are derived
  equivalent if and only if there is an isomorphism of algebras $R_1\cong R_2$.
\end{conjecture}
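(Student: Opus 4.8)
The plan is to prove the two implications of \cref{conj:DW} separately, with essentially all of the difficulty located in recovering the singularity from its contraction algebra. For the \emph{backward implication}, suppose there is an isomorphism $R_1\cong R_2$; transporting $p_1$ along it produces a crepant resolution $p_1'$ of $\Spec*{R_2}$ with $\Con*(p_1')\cong\Con*(p_1)$, so it suffices to treat the case $R_1=R_2=:R$. By the (homological) Minimal Model Program for threefolds, as developed by Wemyss and others, any two crepant resolutions of a fixed isolated cDV singularity are linked by a finite sequence of flops, and by a theorem of August each such flop induces a derived equivalence between the associated contraction algebras; composing these equivalences shows that $\Con*(p_1)$ and $\Con*(p_2)$ are derived equivalent.

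For the \emph{forward implication} we must produce an isomorphism $R_1\cong R_2$ from a derived equivalence $\Con*(p_1)\simeq\Con*(p_2)$, and the strategy is to route everything through the singularity category. Since $p_i$ is crepant, $R_i$ admits a noncommutative crepant resolution $\operatorname{End}_{R_i}(R_i\oplus N_i)$ in which $R_i\oplus N_i$ is a cluster-tilting object of $\CM*{R_i}$; passing to the stable category, $N_i$ becomes a cluster-tilting object of the Hom-finite $2$-Calabi--Yau triangulated category $\stableCM*{R_i}=\SingCat{R_i}$, with $\underline{\operatorname{End}}_{R_i}(N_i)\cong\Con*(p_i)$. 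Because $R_i$ is a hypersurface, $\SingCat{R_i}$ is $2$-periodic, so the endomorphism dg algebra of $N_i$ in the canonical dg enhancement $\dgSingCat{R_i}$ has cohomology concentrated in even degrees, where it is $\Con*(p_i)$; in particular $N_i$ is a $2\ZZ$-cluster-tilting object and $\Con*(p_i)$ is a finite-dimensional twisted-periodic algebra. Since $N_i$ is moreover a classical generator, $\dgSingCat{R_i}$ is quasi-equivalent to the perfect derived category of $\DerCon*(p_i)$, the derived contraction algebra.

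Two cited facts now enter. First, the Hua--Keller reconstruction theorem recovers the complete local ring $R_i$ from the enhanced category $\dgSingCat{R_i}$, equivalently from $\DerCon*(p_i)$. Second, the Derived Auslander--Iyama Correspondence asserts that an algebraic triangulated category equipped with a $2\ZZ$-cluster-tilting object is, together with its canonical enhancement, rigidly and functorially determined by the twisted-periodic endomorphism algebra of that object---and since we are over $\CC$, the exceptional low-characteristic cases of the correspondence are excluded. Now the derived equivalence $\Con*(p_1)\simeq\Con*(p_2)$ sends the free module to a tilting complex, and by the mutation theory of Iyama--Wemyss and August such tilting complexes are realised by other crepant resolutions; the Derived Auslander--Iyama Correspondence then shows that each such tilting complex corresponds to a quasi-equivalence between the associated enhanced singularity categories, so that $\dgSingCat{R_1}\simeq\dgSingCat{R_2}$. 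Applying the Hua--Keller reconstruction on both sides yields $R_1\cong R_2$, and together with the backward implication this proves \cref{conj:DW}.

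The results of Wemyss, August, and Hua--Keller are used as essentially black boxes, so the real content is the deployment of the Derived Auslander--Iyama Correspondence in this geometric setting, and the main obstacles are concentrated there. One must identify $\Con*(p)$ precisely as a $2\ZZ$-cluster-tilting endomorphism algebra of the twisted-periodic type to which the correspondence applies and verify that $\SingCat{R}$ lies within its scope; one must ensure that the rigidity statement is functorial enough to react to \emph{derived} equivalences of contraction algebras, and not merely to isomorphisms, so that it absorbs the mutation combinatorics furnished by August's tilting theory; and one must match the resulting quasi-equivalence of dg singularity categories with the precise form of the reconstruction proved by Hua and Keller. Threading the Derived Auslander--Iyama Correspondence between August's tilting theory on one side and Hua--Keller's reconstruction on the other is the crux, and the main obstacle, of the proof.
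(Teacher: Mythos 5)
Your proposal is correct and follows essentially the same route as the paper: August's theorem that the contraction algebras of a fixed singularity form a complete derived equivalence class reduces the problem to isomorphic contraction algebras, the injectivity of the Derived Auslander--Iyama Correspondence (for $d=2$, applied to the $2\ZZ$-cluster tilting object $N$ in $\SingCat{R}$ with $4$-periodic endomorphism algebra $\Con*$) upgrades this to a quasi-isomorphism of the $2\ZZ$-derived contraction algebras and hence a quasi-equivalence $\dgSingCat{R_1}\simeq\dgSingCat{R_2}$, and the Hua--Keller computation of $\HH*[0]{\dgSingCat{R}}$ as the Tyurina algebra together with Mather--Yau recovers $R_1\cong R_2$. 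The only cosmetic difference is in the converse direction, where the paper credits the flop-induced derived equivalences to Wemyss and Dugas rather than August, but the argument is the same.
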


The original conjecture was formulated only in the case of single-curve
contractions; algebraically, this corresponds to the case where the contraction
algebras are local and thus derived equivalence reduces to mere isomorphism of
algebras since contraction algebras are basic, see~\cite[Prop.~6.7.4]{Zim14} for
example. In the above form, which allows for contracting multiple curves, the
conjecture appeared in print in~\cite[Conj.~1.3]{Aug20a}.

That the contraction algebras of a given isolated cDV singularity are derived
equivalent follows by combining results from Wemyss~\cite{Wem18} and
Dugas~\cite{Dugas15}. In this note we provide an outline of the proof of the
remaining part of \Cref{conj:DW}. This proof first appeared in the appendix
to~\cite{JKM22} written by the second-named author where it is explained how the
conjecture follows by combining previous results of August~\cite{Aug20a}
and~\cite{HK18} with the Derived Auslander--Iyama Co\-rres\-pon\-dence---the
main result in~\cite{JKM22}. For the sake of concreteness, we restrict ourselves
to the specific context of the conjecture, with the understanding that most
concepts and results that are presented here are but special cases of a much
more general theory that the reader can find in the original sources. We hope
that this sacrifice in generality makes the proof of the conjecture accessible
to a broader readership.

The proof of the conjecture makes use of an invariant associated to a
contraction algebra of a resolution of a cDV singularity $R$ that we call the
restricted universal Massey product. This is a certain Hochschild cohomology
class that is induced by the first possibly non-trivial higher operation on a
minimal $A_\infty$-algebra model of the derived endomorphism algebra of a
generator of the singularity category of $R$. As it turns out, this invariant
determines the derived endomorphism algebra of the generator up to
quasi-isomorphism and, combined with the aforementioned results, this is the
final technical ingredient in the proof of~\Cref{conj:DW}.

\subsection*{Acknowledgements}

The authors thank the organisers of the Online Workshop on Higher Dimensional
Homological Algebra at the IPM-Isfahan in May 2022, where the application of the
Derived Auslander--Iyama Correspondence to the Donovan--Wemyss Conjecture was
first found. The authors thank Michael Wemyss and Zheng Hua for interesting
conversations and email exchanges; in addition, they thank M.~W. for suggesting
the terminology `$2\ZZ$-derived contraction algebra' used in these proceedings.
The third-named author is also grateful to Matt Booth for answering questions.

\subsection*{Financial support}

F.~M.~was partially supported by grants PID2020-117971GB-C21
funded by MCIN/AEI/10.13039/501100011033 and
P20\_01109 (JUNTA/FEDER, UE).

\section{Preliminaries}

In this section we collect preliminary definitions and results that are needed in
our proof of~\Cref{conj:DW}. We use freely the theories of differential graded
categories~\cite{Kel94,Kel06} and $A_\infty$-categories~\cite{Lef03}. We denote
the derived category of an algebra or, more generally, a DG algebra $A$ by
$\DerCat{A}$; the perfect derived category of $A$, that is the full subcategory
of $\DerCat{A}$ spanned by its compact objects, is denoted by $\DerCat[c]{A}$.
All (DG) modules are right (DG) modules.

\subsection{$2\ZZ$-cluster tilting objects}

Let $\T$ be a triangulated category whose underlying additive category is
Krull--Schmidt and has finite-dimensional morphism spaces.

\begin{definition}[{\cite{IY08,GKO13}}] A basic\footnote{An object in a
    Krull--Schmidt additive category is basic if all of its indecomposable
    direct summands have multiplicity one.} object $T\in\T$ is \emph{$2$-cluster
    tilting} if the following conditions hold:
  \begin{enumerate}
    \item The object $T$ is \emph{rigid}: $\T(T,T[1])=0$.
    \item For each object $X\in\T$ there exists a triangle $T_1\to T_0\to X\to
            T_1[1]$ with $T_0,T_1\in\add*{T}$, where $\add*{T}$ is the smallest additive
          subcategory of $\T$ containing $T$ that is closed under direct summands.
  \end{enumerate}
  We say that $T$ is \emph{$2\ZZ$-cluster tilting} if it is $2$-cluster tilting
  and $T\cong T[2]$.
\end{definition}

\begin{remark}
  Clearly, if $T\in\T$ is a $2$- or $2\ZZ$-cluster tilting object, then $T$
  generates $\T$ as a triangulated category with split idempotents (which is to
  say that $T$ is a classical generator of $\T$). In particular, if the
  triangulated category $\T$ is algebraic\footnote{A triangulated category is
    \emph{algebraic} if it is equivalent---as a triangulated category---to the
    stable category of a Frobenius exact category~\cite{Kel94}.} then there
  exists a DG algebra $A$ and an equivalence of triangulated categories
  \[
    \T\stackrel{\sim}{\longrightarrow}\DerCat[c]{A},\qquad T\longmapsto A.
  \]
\end{remark}

\begin{remark}
  Given a basic $2$-cluster tilting object $T\in\T$, one can produce a new such
  object by a procedure called \emph{mutation} that, in a nutshell, replaces a
  single indecomposable direct summand of $T$ by a new one, see~\cite{IY08} for
  a precise definition. This process, which can be iterated, is an important
  reason for introducing $2$-cluster tilting objects into the framework of the
  Homological MMP, see~\cite{Wem18} and compare
  with~\Cref{sec:contraction_algebras}.
\end{remark}

\begin{remark}
  In general, $2\ZZ$-cluster tilting objects are \emph{not} invariant under
  mutation (however, see~\cite[Sec.~4]{HI11}). In the context of the Homological
  MMP this problem does not occur since the notions of $2$- and $2\ZZ$-cluster
  tilting object coincide, see~\Cref{sec:MCM_sing}.
\end{remark}

\subsection{Maximal Cohen--Macaulay modules and singularity categories}
\label{sec:MCM_sing}

Let $R$ be an isolated cDV singularity and
\[
  \CM*{R}\coloneqq\set{M\in\mmod*{R}}[\depth*{M}=\dim(R)]
\]
be the category of \emph{maximal Cohen--Macaulay $R$-modules} \cite{Yos90,LW12}.
The category $\CM(R)$ is a Frobenius exact category and, therefore, the stable
category $\stableCM*{R}$ has an induced structure of a triangulated category;
moreover, there is a canonical equivalence of triangulated categories
\[
  \stableCM*{R}\stackrel{\sim}{\longrightarrow}\SingCat{R}\coloneqq\DerCat[b]{\mmod*{R}}/\KCh[b]{\proj*{R}},
\]
where $\SingCat{R}$ is the \emph{singularity category of $R$}, see~\cite{Buc21}
for details. We record the following facts for later use:
\begin{itemize}
  \item \cite[Prop.~1.18]{Yos90} Since $R$ is complete local, $\stableCM*{R}\simeq\SingCat{R}$ is a Krull--Schmidt category
        with finite-dimensional morphism spaces.
  \item \cite{Aus78} Since $R$ is $3$-dimensional,
        $\stableCM*{R}\simeq\SingCat{R}$ is a
        \emph{$2$-Calabi--Yau} triangulated category~\cite{Kon98,Kel08}, that is there is a natural
        isomorphism
        \[
          D\!\Hom{X}{Y}\stackrel{\simeq}{\longrightarrow}\Hom{Y}{X[2]},\qquad
          X,Y\in\stableCM*{R}\simeq\SingCat{R},
        \]
        where $V\mapsto DV$ denotes the passage to the $\CC$-linear dual.
  \item \cite{Eis80} Since $R$ is a hypersurface, $\stableCM*{R}\simeq\SingCat{R}$
        is a $2$-periodic triangulated category, that is there is an isomorphism of exact functors
        $[2]\cong\id$. In particular, the notions of $2$- and $2\ZZ$-cluster tilting
        object coincide in this context.
  \item The endomorphism algebra of any basic object $X$ in $\stableCM*{R}\simeq\SingCat{R}$ is a
        symmetric algebra. This is an immediate consequence of
        the natural isomorphisms
        \[
          \Hom{X}{X}\cong D\Hom{X}{X[2]}\cong D\Hom{X}{X}.
        \]
\end{itemize}
We also consider the DG category $\dgSingCat{R}$, which is defined as the DG
quotient~\cite{Kel99,Dri04} of the canonical DG enhancements of the triangulated
categories $\DerCat[b]{\mmod*{R}}$ and $\KCh[b]{\proj*{R}}$. By construction,
\[
  \dgH[0]{\dgSingCat{R}}=\SingCat{R}.
\]

\subsection{Contraction algebras via $2\ZZ$-cluster tilting objects}
\label{sec:contraction_algebras}

Let $R$ be an isolated cDV singularity and ${p\colon X\to \Spec*{R}}$ a crepant
resolution. As explained in the introduction, to this geometric setup Donovan
and Wemyss associate a basic finite-dimensional algebra $\Con*=\Con*(p)$. We
recall an alternative construction of the algebra $\Con*$ that is more adapted
to the methods we utilise in this note. Given $p$ as above, a theorem of Van den
Bergh~\cite{VdBer04} furnishes a tilting bundle
\[
  \OO_X\oplus \NN=\OO_X\oplus\NN(p)\in\coh*{X}
\]
and Wemyss proves~\cite{Wem18} that there is an isomorphism of algebras
\[
  \Con*\cong\sEnd[R]{N}
\]
between the contraction algebra of $p$ and the stable endomorphism algebra of
${N\coloneqq H^0(\NN)\in\CM*{R}}$. Remarkably, when viewed as an object of the
triangulated category $\stableCM*{R}$, the $R$-module $N$ is a $2\ZZ$-cluster
tilting object. Conversely, given a $2\ZZ$-cluster tilting object
$T\in\stableCM*{R}$, there exists a crepant resolution of $\Spec*{R}$ whose
associated contraction algebra is isomorphic to $\sEnd[R]{T}$. We summarise the
previous discussion in the following theorem.\footnote{Wemyss proves even more:
  Up to isomorphism on both sides, crepant resolutions of $R$ correspond
  bijectively to (basic) $2\ZZ$-cluster tilting objects in $\SingCat{R}$. In
  particular, the number of isomorphism classes of $2\ZZ$-cluster tilting
  objects in $\SingCat{R}$ if finite, for the number of minimal models of
  $\Spec*{R}$ is finite~\cite{KM87}.}

\begin{theorem}[{\cite{Wem18}}]
  \label{thm:Wemyss}
  Let $R$ be an isolated cDV singularity and assume that $\Spec*{R}$ admits a
  crepant resolution. Then, the contraction algebras of $R$ are precisely the
  endomorphism algebras of $2\ZZ$-cluster tilting objects in the triangulated
  category $\stableCM*{R}\simeq\SingCat{R}$.
\end{theorem}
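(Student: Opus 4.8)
The plan is to read the statement as a summary of the chain of equivalences recalled just above, so that the task reduces to tracing through each link; throughout, $R$ is an isolated cDV singularity such that $\Spec*{R}$ admits a crepant resolution.

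In one direction, let $p\colon X\to\Spec*{R}$ be a crepant resolution. First I would invoke Van den Bergh's theorem~\cite{VdBer04}: $\OO_X\oplus\NN$ is a tilting bundle on $X$, so that $\Lambda\coloneqq\End[X]{\OO_X\oplus\NN}\cong\End[R]{R\oplus N}$, with $N=H^0(\NN)\in\CM*{R}$, is a noncommutative crepant resolution (NCCR) of $R$ and $\DerCat[b]{\coh*{X}}\simeq\DerCat[b]{\mmod*{\Lambda}}$. Writing $e$ for the idempotent of $\Lambda$ cutting out the $\OO_X$-summand and using that $R$ is the projective generator of $\CM*{R}$, Wemyss's computation~\cite{Wem18} identifies the contraction algebra $\Con*(p)=\Lambda/\Lambda e\Lambda$ with the stable endomorphism ring $\sEnd[R]{N}$, i.e.\ the endomorphism ring of $N$ computed in $\stableCM*{R}$. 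It therefore remains to check that $N$, viewed in $\stableCM*{R}$, is $2\ZZ$-cluster tilting. Here the NCCR property makes $R\oplus N$ a maximal modifying $R$-module, and since $R$ is an isolated singularity these are exactly the cluster tilting objects of the Frobenius category $\CM*{R}$ (see~\cite{Wem18}); passing to the stable category---in which $R$ becomes zero---and converting the vanishing of $\stableCM*{R}(N,N[1])$ into the approximation axiom by means of the $2$-Calabi--Yau duality recorded above shows that $N$ is $2$-cluster tilting in $\stableCM*{R}$. Finally, since $R$ is a hypersurface, $\stableCM*{R}\simeq\SingCat{R}$ is $2$-periodic, so $N\cong N[2]$ and $N$ is in fact $2\ZZ$-cluster tilting, as needed.

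Conversely, let $T\in\stableCM*{R}$ be a $2\ZZ$-cluster tilting object, hence in particular $2$-cluster tilting. Since $R$ is projective-injective in $\CM*{R}$, the cluster tilting subcategory $\add*{T}$ lifts to a cluster tilting subcategory $\add*{R\oplus T'}$ of $\CM*{R}$ with $T'$ a basic module mapping to $T$; equivalently, $M\coloneqq R\oplus T'$ is a maximal modifying module and $\End[R]{M}$ is an NCCR of $R$. One must then exhibit a crepant resolution realising this NCCR, and for this I would appeal to Wemyss's homological MMP~\cite{Wem18}: all NCCRs of the isolated cDV singularity $R$ are obtained from one another by iterated mutation, mutation of NCCRs mirrors the flops linking the (finitely many, by~\cite{KM87}) crepant resolutions of $\Spec*{R}$, and each NCCR is the endomorphism algebra of the tilting bundle on the corresponding resolution. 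Granting this, there is a crepant resolution $p'\colon X'\to\Spec*{R}$ with $\End[X']{\OO_{X'}\oplus\NN'}\cong\End[R]{M}$, and the idempotent description of the contraction algebra then gives $\Con*(p')\cong\sEnd[R]{T'}$, the endomorphism ring of $T$ in $\stableCM*{R}$.

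The step I expect to be the main obstacle is precisely this realisation statement used in the converse direction: that every NCCR---equivalently, by the above, every $2\ZZ$-cluster tilting object of $\stableCM*{R}$---actually arises from an honest crepant resolution, rather than only from a formally defined algebraic partner. By contrast, the dictionary between NCCRs, maximal modifying modules and cluster tilting subcategories, and the transfer of the cluster tilting property from $\CM*{R}$ to $\stableCM*{R}$, are by now routine given the $2$-Calabi--Yau and $2$-periodic structure established above. The realisation rests on Van den Bergh's construction~\cite{VdBer04} together with Wemyss's comparison of mutation with flops~\cite{Wem18} and the finiteness of minimal models~\cite{KM87}; one should also verify that the basic and connected normalisations, as well as the $\CC^n$-augmentation data, match on the two sides of the correspondence.
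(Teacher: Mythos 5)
Your proposal is correct and follows essentially the same route as the paper, which does not prove this theorem itself but assembles it from Van den Bergh's tilting bundle~\cite{VdBer04}, Wemyss's identification $\Con*\cong\sEnd[R]{N}$ and the NCCR/cluster-tilting dictionary, the $2$-periodicity coming from the hypersurface property, and the mutation--flop correspondence of the Homological MMP for the converse realisation~\cite{Wem18}. You also correctly isolate the genuinely hard step, namely that every $2\ZZ$-cluster tilting object is realised by an honest crepant resolution, which is exactly the content the paper delegates to~\cite{Wem18}.
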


The following theorem of August reduces \Cref{conj:DW} from a derived
equivalence to an isomorphism problem. The proof leverages the characterisation
of contraction algebras provided by \Cref{thm:Wemyss}.

\begin{theorem}[{\cite[Thm.~1.4]{Aug20a}}]
  \label{thm:August}
  Let $R$ be an isolated cDV singularity and assume that $\Spec*{R}$ admits a
  crepant resolution. The contraction algebras of $R$ form a single and complete
  derived equivalence class of basic algebras.
\end{theorem}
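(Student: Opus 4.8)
The statement has two halves, and only the second needs a genuinely new argument. For the first half---that the contraction algebras of $R$ are pairwise derived equivalent---I would assemble the results recalled above: by \Cref{thm:Wemyss} every contraction algebra of $R$ has the form $\sEnd[R]{T}$ for a $2\ZZ$-cluster tilting object $T\in\stableCM*{R}$; any two such objects are linked by a finite chain of cluster-tilting mutations (geometrically, by flops: $\Spec*{R}$ has finitely many crepant resolutions, all connected by flops \cite{KM87,Wem18}); and each single mutation induces a derived equivalence of the associated stable endomorphism algebras \cite{Dugas15}. Thus the contraction algebras of $R$ form \emph{a single} derived equivalence class, and the remaining task is to prove this class \emph{complete}: any basic finite-dimensional algebra $\Gamma$ with $\DerCat{\Gamma}\simeq\DerCat{\Lambda}$, where $\Lambda\coloneqq\Con*(p)$ for a fixed crepant resolution $p$, is again a contraction algebra of $R$.

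The plan for completeness is to realise such a derived equivalence at the level of the ambient noncommutative crepant resolution and then invoke the geometric classification of the latter. Let $A=\End[R]{R\oplus N}$ be the NCCR attached to Van den Bergh's tilting bundle $\OO_X\oplus\NN$ \cite{VdBer04}, and let $e\in A$ be the primitive idempotent with $eAe=R$ and $A/AeA\cong\sEnd[R]{N}=\Lambda$; recall that $A$ is module-finite over $R$ of finite global dimension and that $\Lambda$ is symmetric. Since $\Lambda$ is symmetric, $\Gamma$ is symmetric too and the equivalence is standard, hence induced by a tilting complex $P\in\DerCat[c]{\Lambda}$ with $\End[\DerCat{\Lambda}]{P}\cong\Gamma$. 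The decisive step is to \emph{lift} $P$ through the recollement determined by $e$: I would construct a complement $Q\in\DerCat[c]{A}$ so that $eA\oplus Q$ is a tilting complex over $A$ with $Q\otimes^{\mathbf L}_A\Lambda\simeq P$. The vanishing of $\RHom[A]{eA}{Q}$ in all nonzero degrees holds by construction, but the correct $Q$ is \emph{not} the image of $P$ under the embedding $\DerCat{\Lambda}\hookrightarrow\DerCat{A}$; it must be assembled from $P$ together with the relevant $eA$-approximation data, in the spirit of the two-term mutation theory underlying the Homological MMP \cite{Wem18}. Granting the lift, $B\coloneqq\End[\DerCat{A}]{eA\oplus Q}$ is a basic algebra, module-finite over $R$ of finite global dimension, derived equivalent to $A$, carrying an idempotent $f$ with $fBf=R$ and $B/BfB\cong\Gamma$; that is, $B$ is again an NCCR of $R$.

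To conclude, I would use that $R$ is an isolated cDV singularity admitting a crepant resolution, so by the Homological MMP \cite{Wem18} (mutation of modifying modules realised geometrically by flops, together with the finiteness of minimal models \cite{KM87}) every NCCR of $R$ that is derived equivalent to $A$---in particular $B$---is the endomorphism algebra of a tilting bundle on some crepant resolution $p'\colon X'\to\Spec*{R}$; hence $\Gamma\cong B/BfB\cong\Con*(p')$ is a contraction algebra of $R$, which together with the first half proves the theorem. The main obstacle is the lifting step: one must determine exactly which tilting complexes over the quotient $A/AeA$ arise from tilting complexes over $A$ and construct the twisted complement $Q$. An alternative route, meeting the same essential difficulty, is to show directly that $\Lambda$ is silting-discrete---it has only finitely many two-term silting complexes, again by the finiteness of minimal models---so that, $\Lambda$ being symmetric and $\stableCM*{R}$ being $2$-periodic, every tilting complex over $\Lambda$ is reached from the regular module by iterated irreducible silting mutation, each step matching a cluster-tilting mutation in $\stableCM*{R}$ and so staying within the class of contraction algebras; consequently $\Gamma$ is a contraction algebra.
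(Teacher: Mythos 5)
A preliminary point: the paper does not prove \Cref{thm:August} at all---it is imported wholesale from August's work \cite{Aug20a}, with only the remark that her argument leverages the characterisation of contraction algebras in \Cref{thm:Wemyss}. So there is no in-paper proof to compare yours against; the comparison below is with August's published argument.

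Your first half is correct and is exactly how the ``single class'' statement is obtained in the literature: Wemyss's bijection between crepant resolutions and $2\ZZ$-cluster tilting objects, connectedness under mutation/flops, and Dugas's derived equivalences for each mutation step (the paper itself attributes this combination to \cite{Wem18} and \cite{Dugas15}). For completeness, however, both of your routes stop precisely where the real work begins. In the recollement route, the sentence ``Granting the lift\dots'' concedes the entire difficulty: determining which tilting complexes over $\Lambda=A/AeA$ lift to tilting complexes over the NCCR $A$, and constructing the twisted complement $Q$, is not a routine approximation argument---it is the substance of August's tilting theory of contraction algebras \cite{August19}, where the two-term tilting complexes of $\Lambda$ are put in bijection with modifying modules on the $A$-side before anything can be lifted. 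In the alternative route, ``show directly that $\Lambda$ is silting-discrete'' is likewise the crux rather than a lemma: discreteness does not follow formally from the finiteness of minimal models; one must first prove that two-term tilting complexes of $\Lambda$ correspond to the (finitely many) $2\ZZ$-cluster tilting objects of $\stableCM*{R}$ and that irreducible tilting mutation matches cluster-tilting mutation, and only then does a discreteness argument show that \emph{every} tilting complex is reached from $\Lambda$ by iterated irreducible mutation. That second route is in fact the one August takes, so your instinct about the shape of the proof is right; but as written the proposal replaces the two decisive steps by assertions, so it does not yet constitute a proof.
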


\begin{corollary}
  \label{cor:August}
  Let $R_1$ and $R_2$ be isolated cDV singularities with crepant resolutions
  \[
    p_1\colon X_1\to \Spec*{R_1}\qquad\text{and}\qquad p_2\colon X_2\to
    \Spec*{R_2}
  \]
  and corresponding contraction algebras
  $\Con_1=\Con*(p_1)$ and $\Con_2=\Con*(p_2)$. If the algebras $\Lambda_1$ and
  $\Lambda_2$ are derived equivalent, then there exists a contraction algebra
  $\Lambda$ of $R_2$ such that $\Lambda\cong\Lambda_1$.
\end{corollary}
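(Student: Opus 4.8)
The plan is to obtain the statement as a purely formal consequence of \Cref{thm:August}, the only additional input being that contraction algebras are basic by construction.

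First I would record that $\Lambda_1=\Con*(p_1)$ is a basic finite-dimensional $\CC$-algebra; this is part of the definition of a contraction algebra recalled in \Cref{sec:contraction_algebras} (and equally visible through the description $\Lambda_1\cong\sEnd[R_1]{N}$ coming from \Cref{thm:Wemyss}, since the stable endomorphism algebra of a basic object is basic). Next I would apply \Cref{thm:August} to the singularity $R_2$: the contraction algebras of $R_2$ constitute a \emph{single and complete} derived equivalence class of basic algebras. Here \textbf{single} means that any two contraction algebras of $R_2$ are derived equivalent, while \textbf{complete} means that this class is saturated among basic algebras, in the sense that every basic algebra that is derived equivalent to some contraction algebra of $R_2$ is itself isomorphic to a contraction algebra of $R_2$.

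With these two observations in hand the argument is immediate. By hypothesis $\Lambda_1$ is derived equivalent to $\Lambda_2=\Con*(p_2)$, and $\Lambda_2$ is a contraction algebra of $R_2$. Since $\Lambda_1$ is basic, completeness of the derived equivalence class furnished by \Cref{thm:August} forces $\Lambda_1$ to be isomorphic to some contraction algebra $\Lambda$ of $R_2$, which is precisely the assertion of the corollary.

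There is essentially no obstacle here; the only point requiring (minor) care is the correct reading of the word \emph{complete} in \Cref{thm:August}, namely that the derived equivalence class appearing there is saturated \emph{within} the class of basic algebras. Combined with the fact that contraction algebras, and in particular $\Lambda_1$, are basic, this is exactly what allows one to promote the given derived equivalence $\Lambda_1\sim\Lambda_2$ to an honest algebra isomorphism $\Lambda_1\cong\Lambda$ with a member $\Lambda$ of the class.
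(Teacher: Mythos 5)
Your argument is correct and is exactly the intended one: the paper states \Cref{cor:August} without proof, treating it as an immediate consequence of \Cref{thm:August}, and your reading of \emph{complete} (saturation of the class within basic algebras) together with the observation that $\Lambda_1$ is basic is precisely the implicit reasoning. Nothing is missing.
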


\subsection{Hochschild cohomology}\label{subsec:HH}

Let $A$ be a graded algebra. The bigraded \emph{Hochschild cochain complex} has components
\[
  \HC[p][q]{A}[A]\coloneqq\Hom[\CC]{A^{\otimes
        p}}{A(q)},\qquad p\geq 0,\ q\in\ZZ,
\]
where $V\mapsto V(1)$ is the (vertical) degree shift of graded vector spaces,
equipped with the Hochschild differential $x\mapsto \partial(x)$ of bidegree
$(1,0)$, see~\cite{Mur20b} for the precise definition, related structure
(described below) and sign conventions. The first degree is called
\emph{horizontal} or \emph{Hochschild degree} and the second is the
\emph{vertical} or \emph{internal degree}; the sum of both is the \emph{total
  degree} and we denote it by $\abs{x}$ (we also use this notation for the
degree of an element in a singly-graded vector space). The component of total degree $n$
of the Hochschild complex is
\[
  \prod_{p+q=n}\HC[p][q]{A}[A].
\]

The Hochschild complex is equipped with a \emph{brace algebra} structure,
consisting of operations called \emph{braces} (which we do not describe
explicitly here)
\begin{align*}
  \HC{A}[A]^{\otimes^{n+1}}              & \longrightarrow \HC{A}[A],         \\
  x_0\otimes x_1\otimes\cdots\otimes x_n & \longmapsto x_0	\{x_1,\dots,x_n\},
\end{align*}
that are defined for all $n\geq 1$, and satisfy the \emph{brace relation}
\begin{multline*}
  x\{y_1,\dots,y_p\}\{z_1,\dots,z_q\}\\
  =\sum_{0\leq i_1\leq j_1\leq\cdots\leq i_p\leq j_p\leq q}(-1)^\epsilon x\{z_1,\dots,z_{i_1},y_1\{z_{i_1+1},\dots,z_{j_1}\},z_{j_1+1},\dots\\
  \dots, z_{i_p},y_q\{z_{i_p+1},\dots,z_{j_p}\},z_{j_p+1},\dots,z_q\}.
\end{multline*}
Above, $\epsilon$ reflects the Koszul sign rule with respect to the total degree
shifted by $-1$. Brace operations have horizontal degree $-n$ and vertical
degree $0$ and the $n$-th brace operation vanishes when $x_0$ has
horizontal degree $<n$.
The Hochschild complex is a bigraded
associative algebra equipped with the \emph{cup-product}
\[x\cdot y=(-1)^{\abs{x}-1}m_2\{x,y\},\]
where $m_2\in\HC[2][0]{A}[A]$ is, up to a sign, the multiplication in the graded algebra $A$.
The Hochschild complex also has the structure of a (horizontally-shifted)
graded Lie algebra, with Lie bracket
\[[x,y]=x\{y\}-(-1)^{(\abs{x}-1)(\abs{y}-1)}y\{x\}\] of horizontal degree $-1$
and vertical degree $0$. The Hochschild differential is given by
\[\partial(x)=[m_2,x]\]
and satisfies the corresponding Leibniz rules with respect to the previous
associative product and Lie bracket.

The relation between brace operations, the Hochschild differential and the cup
product are encoded in the following straightforward consequences of the brace
relation.

\begin{lemma}\label{braces_and_differential}
  The following formula holds for all $n\geq 1$:
  \begin{align*}
    \partial(x_0\{x_1,\dots,x_n\}) & =\partial(x_0) \{x_1,\dots,x_n\}                                                                          \\
                                   & \phantom{=}+\sum_{i=1}^n (-1)^{\sum_{j=0}^{i-1}\abs{x_j}-i}x_0\{x_1,\dots,\partial(x_i),\dots,x_n\}       \\
                                   & \phantom{=}+(-1)^{\abs{x_0}-1+\abs{x_0}\abs{x_1}}x_1\cdot x_0\{x_2,\dots,x_{n-1}\}                        \\
                                   & \phantom{=}+\sum_{i=1}^{n-1}(-1)^{\sum_{j=0}^i\abs{x_i}-i-1} x_0\{x_1,\dots, x_i\cdot x_{i+1},\dots,x_n\} \\
                                   & \phantom{=}+(-1)^{\sum_{j=0}^{n-1}\abs{x_j}-n-1}x_0\{x_1,\dots,x_{n-1}\}\cdot x_n.
  \end{align*}
\end{lemma}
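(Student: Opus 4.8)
The plan is to expand $\partial(z)=[m_2,z]=m_2\{z\}-(-1)^{\abs{z}-1}z\{m_2\}$ for $z=x_0\{x_1,\dots,x_n\}$ and to rewrite each of the two summands by repeated use of the brace relation until only the operations appearing on the right-hand side of the claimed formula remain. The whole computation is organised around the fact that $m_2\in\HC[2][0]{A}[A]$ has Hochschild degree $2$: accordingly a brace $m_2\{u_1,\dots,u_k\}$ vanishes as soon as $k\geq 3$, it equals $\pm u_1\cdot u_2$ when $k=2$, and for $k\leq 1$ it is governed by the identity $\partial(u)=m_2\{u\}-(-1)^{\abs{u}-1}u\{m_2\}$.

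First I would handle the term $z\{m_2\}=(x_0\{x_1,\dots,x_n\})\{m_2\}$ by a single application of the brace relation, with $p=n$, $q=1$ and $z_1=m_2$. Since the one extra argument $m_2$ can only be inserted into a single $x_i$ or dropped between two consecutive arguments of $x_0$, this produces exactly the terms $x_0\{x_1,\dots,x_i\{m_2\},\dots,x_n\}$ for $1\leq i\leq n$, together with the terms $x_0\{x_1,\dots,x_i,m_2,x_{i+1},\dots,x_n\}$ for $0\leq i\leq n$, each with an explicit sign dictated by the Koszul rule for the total degree shifted by $-1$.

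The term $m_2\{z\}$ I would treat in three steps. Applying the brace relation to $(m_2\{x_0\})\{x_1,\dots,x_n\}$ with $p=1$, $q=n$ and discarding every summand in which $m_2$ is fed three or more arguments, the only survivors are $m_2\{z\}$ itself (the summand in which $x_0$ absorbs all of $x_1,\dots,x_n$) and the two boundary cup products $x_1\cdot x_0\{x_2,\dots,x_n\}$ and $x_0\{x_1,\dots,x_{n-1}\}\cdot x_n$ (the summands in which $m_2$ retains exactly two arguments); solving this relation expresses $m_2\{z\}$ in terms of $(m_2\{x_0\})\{x_1,\dots,x_n\}$ and those two cup products. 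Next, the identity $m_2\{x_0\}=\partial(x_0)+(-1)^{\abs{x_0}-1}x_0\{m_2\}$ turns $(m_2\{x_0\})\{x_1,\dots,x_n\}$ into $(\partial x_0)\{x_1,\dots,x_n\}+(-1)^{\abs{x_0}-1}(x_0\{m_2\})\{x_1,\dots,x_n\}$. Finally, a third application of the brace relation, now to $(x_0\{m_2\})\{x_1,\dots,x_n\}$ with $p=1$, $q=n$ and again discarding the summands in which $m_2$ receives three or more arguments, splits this expression into the terms $x_0\{x_1,\dots,x_i,m_2,x_{i+1},\dots,x_n\}$ (where $m_2$ keeps no argument), the terms $x_0\{x_1,\dots,m_2\{x_i\},\dots,x_n\}$ (where it keeps one), and the interior cup products $x_0\{x_1,\dots,x_i\cdot x_{i+1},\dots,x_n\}$ (where it keeps two).

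Substituting everything into $\partial(z)=m_2\{z\}-(-1)^{\abs{z}-1}z\{m_2\}$, the summands carrying a bare $m_2$ among the arguments of $x_0$ cancel between the two contributions; each pair $x_0\{x_1,\dots,m_2\{x_i\},\dots,x_n\}$ and $x_0\{x_1,\dots,x_i\{m_2\},\dots,x_n\}$ recombines, through $\partial(x_i)=m_2\{x_i\}-(-1)^{\abs{x_i}-1}x_i\{m_2\}$, into $x_0\{x_1,\dots,\partial(x_i),\dots,x_n\}$; and what is left over is $(\partial x_0)\{x_1,\dots,x_n\}$, the interior cup products, and the two boundary cup products---exactly the asserted identity. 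The one genuinely laborious point is to check that all of these Koszul signs, accumulated over the three invocations of the brace relation, line up so that the bare-$m_2$ terms cancel and the $m_2\{x_i\}$ and $x_i\{m_2\}$ contributions recombine into $\partial(x_i)$ with precisely the signs displayed in the statement; this bookkeeping is where I expect all the friction to lie, though it is entirely mechanical.
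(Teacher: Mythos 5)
Your proposal is correct and takes essentially the same route the paper intends: the paper offers no written proof beyond declaring the lemma a straightforward consequence of the brace relation, and your expansion of $\partial(z)=[m_2,z]=m_2\{z\}-(-1)^{\abs{z}-1}z\{m_2\}$ via three applications of the brace relation, using that $m_2\{u_1,\dots,u_k\}=0$ for $k\geq 3$ because $m_2$ has horizontal degree $2$, is precisely that derivation, with the bare-$m_2$ insertions cancelling and the pairs $m_2\{x_i\}$, $x_i\{m_2\}$ recombining into $\partial(x_i)$ exactly as you describe. The only outstanding item is the Koszul sign bookkeeping, which you rightly flag as mechanical.
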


\begin{lemma}\label{braces_and_cup_product}
  The following formula holds for all $n\geq 1$:
  \begin{align*}
    (x\cdot y)\{z_1,\dots,z_n\}=\sum_{i=0}^{n}(-1)^{\abs{y}\sum_{j=1}^i(\abs{z_i}-1)} x\{z_1,\dots,z_i\}\cdot y\{z_{i+1},\dots,z_n\}.
  \end{align*}
\end{lemma}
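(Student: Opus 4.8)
The plan is to deduce the identity directly from the brace relation, parallel to \Cref{braces_and_differential}. Recall that, by definition, $x\cdot y=(-1)^{\abs{x}-1}m_2\{x,y\}$ with $m_2\in\HC[2][0]{A}[A]$; hence
\[
  (x\cdot y)\{z_1,\dots,z_n\}=(-1)^{\abs{x}-1}\,m_2\{x,y\}\{z_1,\dots,z_n\},
\]
and the first step is to expand the right-hand side with the brace relation, taking there $p=2$, $(y_1,y_2)=(x,y)$, and $q=n$.

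The second step is to throw away the vanishing summands. In that expansion the outer braces act on $m_2$, which has horizontal degree $2$, and the $k$-th brace operation kills elements of horizontal degree $<k$; so only the summands in which $m_2$ is fed exactly two arguments survive. Since the argument list of $m_2$ always contains the two entries $x\{\cdots\}$ and $y\{\cdots\}$, this forces every ``loose'' $z_j$ to disappear, that is $i_1=0$, $i_2=j_1$ and $j_2=n$ in the notation of the brace relation. Writing $i\coloneqq j_1=i_2$, the expansion collapses to
\[
  m_2\{x,y\}\{z_1,\dots,z_n\}=\sum_{i=0}^{n}(-1)^{\epsilon_i}\,m_2\bigl\{\,x\{z_1,\dots,z_i\},\ y\{z_{i+1},\dots,z_n\}\,\bigr\}.
\]

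The third step is to go back to cup products and reconcile signs. Applying $m_2\{u,v\}=(-1)^{\abs{u}-1}u\cdot v$ with $u=x\{z_1,\dots,z_i\}$, and using that a brace lowers the total degree by the number of braced inputs, so $\abs{u}-1\equiv\abs{x}-1+\sum_{j=1}^{i}(\abs{z_j}-1)\pmod 2$, the two factors $(-1)^{\abs{x}-1}$ cancel and one is left with $(-1)^{\,\epsilon_i+\sum_{j=1}^{i}(\abs{z_j}-1)}$ multiplying $x\{z_1,\dots,z_i\}\cdot y\{z_{i+1},\dots,z_n\}$. So it remains to verify that $\epsilon_i\equiv(\abs{y}-1)\sum_{j=1}^{i}(\abs{z_j}-1)\pmod 2$, and then $\epsilon_i+\sum_{j=1}^{i}(\abs{z_j}-1)\equiv\abs{y}\sum_{j=1}^{i}(\abs{z_j}-1)$ is exactly the exponent in the statement (reading $\abs{z_j}$ for $\abs{z_i}$ in the summand).

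I expect this last sign computation to be the only real obstacle, since it forces one to unwind the Koszul sign attached to the surviving term in the precise conventions of \cite{Mur20b}. Concretely: comparing the order $x,y,z_1,\dots,z_n$ of the braced entries with the order $x,z_1,\dots,z_i,y,z_{i+1},\dots,z_n$ in which they occur in the surviving term---with all total degrees shifted by $-1$---the only transposition is that $y$ is moved past $z_1,\dots,z_i$, which contributes precisely $(-1)^{(\abs{y}-1)\sum_{j=1}^{i}(\abs{z_j}-1)}$, as needed. Everything else is a mechanical specialisation of the brace relation.
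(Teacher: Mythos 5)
Your proposal is correct and takes exactly the route the paper intends: the paper gives no written proof, merely asserting that the lemma is a ``straightforward consequence of the brace relation'', and your computation---specialising the brace relation to $m_2\{x,y\}\{z_1,\dots,z_n\}$ with $p=2$, discarding all summands killed by the fact that the $k$-th brace vanishes on an element of horizontal degree $<k$, and then unwinding the Koszul signs (including the correct observation that the exponent in the statement should read $\abs{z_j}$ rather than $\abs{z_i}$)---is precisely that argument carried out in detail.
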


\Cref{braces_and_differential} for $n=1$ proves that the induced associative
product in \emph{Hochschild cohomology}
\[
  \HH{A}[A]=\dgH[\bullet,*]{\HC{A}[A]}
\]
(the cohomology of the
Hochschild complex) is graded commutative with respect to the total degree. This
products satisfies the following compatibility relation with the (horizontally
shifted) Lie algebra structure,
\[ [x,y\cdot z]=[x,y]\cdot z+(-1)^{(\abs{x}-1)\abs{y}}y\cdot[x,z],
\]
and hence Hochschild cohomology is a \emph{Gernstenhaber algebra}. For this we use both \Cref{braces_and_differential,braces_and_cup_product}, for $n=2$ and $n=1$ respectively. The
Hochschild complex $\HC{A}[M]$ and Hochschild cohomology $\HH{A}[M]$ are defined, more
generally, for $M$ an $A$-bimodule, but it does not have any
multiplicative structure in this general case. For the existence of a graded
associative algebra structure it suffices that $M$ be an associative algebra in
$A$-bimodules, see also \cite[Sec.~1]{Mur22}.

\subsection{Minimal $A_\infty$-algebras}

We now describe minimal $A_\infty$-algebras and their morphisms in terms of
the Hochschild complex. A \emph{minimal $A_\infty$-algebra structure} on a
graded algebra $A$ is a Hochschild cochain
\[m=(0,0,0,m_3,\dots,m_n,\dots)\in \HC{A}[A]\] of total degree $2$ such that the
Maurer--Cartan equation
\begin{equation}
  \label{eq:MC}
  \partial(m)+m\{m\}= \partial(m)+\tfrac{1}{2}[m,m]=0
\end{equation}
is satisfied. The pair $(A,m)$ is also denoted
\[(A,m_3,\dots,m_n,\dots).\] If $g\colon A'\to A$ is a graded algebra
isomorphism, then
\[m*g= (0,0,0,g^{-1}m_3g^{\otimes^3},\dots, g^{-1} m_n g^{\otimes^n},\dots)\] is
a minimal $A_\infty$-algebra structure on $A'$. If
\[m'=(0,0,0,m'_3,\dots,m_n',\dots)\in \HC{A}[A]\] is another minimal
$A_\infty$-algebra structure on $A$, an \emph{$A_\infty$-isomorphism with
  identity linear part}
\[f\colon (A,m)\longrightarrow (A,m')\] is a Hochschild cochain
\[f=(0,0,f_2,f_3,\dots,f_n,\dots)\in \HC{A}[A]\] of total degree $1$ such that
the following Hochschild cochain vanishes
\begin{equation}\label{A-map}
  \partial(f)+f\cdot f+\sum_{r\geq 0}m'\{f,\stackrel{r}{\dots},f\}-	m-f\{m\}.
\end{equation}
More generally, an \emph{$A_\infty$-isomorphism} between minimal
$A_\infty$-algebras
\[f\colon (A,m)\longrightarrow (A',m')\] consists of an isomorphism of graded
algebras
\[f_1\colon A\longrightarrow A'\] and a Hochschild cochain
\[(0,0,f_2,f_3,\dots,f_n,\dots)\in \HC{A}[A']\] of total degree $1$ such that
\[(0,0,f_1^{-1}f_2, f_1^{-1} f_3,\dots, f_1^{-1} f_n,\dots)\colon
  (A,m)\longrightarrow (A,m'*f_1)\] is an $A_\infty$-isomorphism with
identity linear part.

\section{The Derived Donovan--Wemyss Conjecture}

In this section we discuss one of the main results in~\cite{HK18}---crucial to
our proof of the Dononvan--Wemyss conjecture---and explain how it implies a
\emph{derived version} of the conjecture (\Cref{cor:HK}).

\subsection{$2\ZZ$-derived contraction algebras}

By means of the equivalence of triangulated categories
$\stableCM*{R}\simeq\SingCat{R}$, the contraction algebra associated to a
crepant resolution $p$ of an isolated cDV singularity can be promoted to the DG
algebra
\[
  \DerCon*=\DerCon*(p)\coloneqq\REnd{N}
\]
given by the derived endomorphism algebra of the corresponding $2\ZZ$-cluster
tilting object $N=N(p)\in\dgSingCat{R}$. By construction
$\dgH[0]{\DerCon*}\cong\Con*$ and, as a consequence of the $2$-periodicity of
the singularity category of $R$,
\[
  \dgH{\DerCon*}\cong\Con*[\imath^{\pm1}]=\Con*\otimes_\CC\CC[\imath^{\pm1}],\qquad
  |\imath|=-2.
\]
We refer to the DG algebra $\DerCon*$ as the \emph{$2\ZZ$-derived contraction algebra
  of $p$}. The (soft) non-positive truncation
$\DerCon*^{\leq0}=\tau^{\leq0}\DerCon*$ of $\DerCon*$ is quasi-isomorphic to the
\emph{derived contraction algebra of $p$} considered for example in~\cite{Boo19,Boo21,HK18}, and there is an isomorphism of graded algebras
\[
  \dgH{\DerCon*^{\leq0}}\cong\Con*[\imath]=\Con*\otimes_\CC\CC[\imath],\qquad
  |\imath|=-2.
\]
The $2\ZZ$-derived contraction algebra $\DerCon*$ is a
localisation of $\DerCon*^{\leq0}$ (see~\cite[Thms.~6.4.6 and 7.2.3]{Boo21} and
\cite[Thm.~4.17]{HK18}) and $\DerCon*$ can also be interpreted as a
\emph{non-connective} variant of $\DerCon*^{\leq0}$.

Notice also that, since $2\ZZ$-cluster tilting objects are in particular
classical generators, there is a canonical quasi-equivalence of DG categories
\[
  \DerCat[c]{\DerCon*}_{\mathrm{dg}}\stackrel{\sim}{\longrightarrow}\dgSingCat{R}
\]
that induces an equivalence of triangulated categories
\[
  \DerCat[c]{\DerCon*}\stackrel{\sim}{\longrightarrow}\SingCat{R}.
\]
Although we do not need this fact in the sequel, we mention that there is an
equivalence of triangulated categories\footnote{For a DG algebra $A$, we denote
  by $\DerCat[fd]{A}$ the full subcategory of $\DerCat{A}$ spanned by the DG
  $A$-modules with finite-dimensional total cohomology.}~\cite[Theorem~4.17 and Lemma~5.12]{HK18}
\[
  \C(\DerCon*^{\leq 0})\coloneqq\DerCat[c]{\DerCon*^{\leq
      0}}/\DerCat[fd]{\DerCon*^{\leq 0}}\simeq\SingCat{R},
\]
that is compatible with the canonical DG enhancements on either side. The
category $\C(\DerCon*^{\leq 0})$ is known as the \emph{Amiot cluster category of
  $\DerCon*^{\leq 0}$}~\cite{Ami07} and, indeed, establishing a link between
birational geometry and the theory of cluster categories was one of the
objectives in~\cite{HK18}.

\subsection{The Derived Donovan--Wemyss Conjecture}

The following theorem of Hua and the second-named author settles a derived
version of \Cref{conj:DW}.

\begin{theorem}[{\cite[Thm.~5.9]{HK18}}]
  \label{thm:HK}
  Let $R=\CC\llbracket x,y,z,t\rrbracket/(f)$ be an isolated cDV singularity.
  Then, there is an isomorphism of algebras
  \[
    \HH*[0]{\dgSingCat{R}}\cong\frac{\CC\llbracket
      x,y,z,t\rrbracket}{\left(f,\partial_xf,\partial_y f,\partial_z
      f,\partial_tf\right)}
  \]
  between the $0$-th Hochschild cohomology of the DG category $\dgSingCat{R}$
  and the Tyurina algebra of $R$. In particular, if $R'$ is a further isolated
  cDV singularity such that the DG categories $\dgSingCat{R}$ and
  $\dgSingCat{R'}$ are quasi-equivalent, then there is an isomorphism of
  algebras $R\cong R'$.
\end{theorem}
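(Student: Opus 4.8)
The plan is to compute the algebra $\HH*[0]{\dgSingCat{R}}$ directly, by means of matrix factorisations, and then to deduce the reconstruction statement from the Mather--Yau theorem. Write $S=\CC\llbracket x,y,z,t\rrbracket$, so that $R=S/(f)$. By Eisenbud's theorem on periodic resolutions over hypersurfaces, together with the results of Buchweitz and Orlov, $\SingCat{R}$ is equivalent, as a triangulated category, to the homotopy category of $\ZZ/2$-graded matrix factorisations of $f$ over $S$; since $R$ has an isolated singularity the residue field $\CC$ admits a finite-rank stabilisation $G$, which is a classical generator, and the equivalence lifts to a quasi-equivalence of DG categories between $\dgSingCat{R}$ and the DG category of matrix factorisations. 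In particular, by Morita invariance of Hochschild cohomology (with its cup product), $\HH*[0]{\dgSingCat{R}}\cong\HH*[0]{\REnd{G}}$, the degree-zero Hochschild cohomology of the DG algebra $\REnd{G}$.

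\emph{A homomorphism from the Tyurina algebra.} Scalar multiplication by an element $r\in R$ is a natural endotransformation of the identity functor of $\dgSingCat{R}$, hence a $0$-cocycle in its Hochschild complex; this defines a ring homomorphism $R\to\HH*[0]{\dgSingCat{R}}$. On any matrix factorisation $P_0\xrightarrow{\,d_0\,}P_1\xrightarrow{\,d_1\,}P_0$ one has $d_0d_1=d_1d_0=f\cdot\id*$, so multiplication by $f$ is null-homotopic; applying $\partial_i$ to the identity $d_0d_1=f$ yields $(\partial_id_0)d_1+d_0(\partial_id_1)=(\partial_if)\cdot\id*$, which exhibits multiplication by each $\partial_if$ as null-homotopic via an explicit homotopy. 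These homotopies being natural, the homomorphism above annihilates the ideal $(f,\partial_xf,\partial_yf,\partial_zf,\partial_tf)$ and so factors through a ring homomorphism $\varphi$ from the Tyurina algebra of $R$ to $\HH*[0]{\dgSingCat{R}}$.

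\emph{$\varphi$ is an isomorphism.} This is the technical heart of the proof. I would establish it via a Hochschild--Kostant--Rosenberg-type computation: expressed through $\REnd{G}$, the Hochschild complex of $\dgSingCat{R}$ is quasi-isomorphic to an explicit completed Koszul-type complex attached to $f$ and its partial derivatives, and one identifies its degree-zero cohomology with the Tyurina algebra---finite-dimensional precisely because the isolated-singularity hypothesis forces the critical locus of $f$ to be zero-dimensional, hence $\partial_xf,\dots,\partial_tf$ to form a regular sequence in $S$---in a manner compatible with $\varphi$. I expect the main obstacle to lie in the bookkeeping that forces $f$ itself, and not merely its partial derivatives, into the defining ideal: this is exactly the point at which the $2$-periodic (equivalently, curved) structure of $\dgSingCat{R}$ must be handled carefully, and it is what distinguishes the Tyurina algebra from the Milnor algebra $S/(\partial_xf,\dots,\partial_tf)$ when $f$ is not quasi-homogeneous. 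An alternative route is to identify $\HH*{\dgSingCat{R}}$ with the singular (Tate) Hochschild cohomology of $R$, whose degree-zero part is, under our hypotheses, $R$ modulo its Jacobian ideal---again the Tyurina algebra.

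\emph{Conclusion.} A quasi-equivalence $\dgSingCat{R}\simeq\dgSingCat{R'}$ induces an isomorphism of graded algebras $\HH*{\dgSingCat{R}}\cong\HH*{\dgSingCat{R'}}$, and hence in degree zero an isomorphism between the Tyurina algebras of $R$ and $R'$. Since $R$ and $R'$ are both isolated hypersurface singularities presented as quotients of $\CC\llbracket x,y,z,t\rrbracket$, the complete-local form of the Mather--Yau theorem recovers each singularity from its Tyurina algebra, and therefore $R\cong R'$ as $\CC$-algebras.
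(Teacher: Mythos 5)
The paper does not reprove this result: it cites \cite[Thm.~5.9]{HK18} and, in the two remarks that follow the statement, records the intended argument --- Keller's comparison theorem identifying $\HH*{\dgSingCat{R}}$ with the singular (Tate--Hochschild) cohomology of $R$, the computation of the latter for hypersurfaces due to the Buenos Aires Cyclic Homology Group (which yields the Tyurina algebra in degree $0$), and Mather--Yau. Your one-sentence ``alternative route'' at the end of the middle paragraph is therefore precisely the actual proof, and your construction of the homomorphism $\varphi$ from the Tyurina algebra and the concluding Mather--Yau step (which, as the paper notes, also needs the dimension of $R$ --- harmless here, since both singularities are three-dimensional hypersurfaces in four variables) are fine.

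Your primary route, however, has a genuine gap at exactly the decisive point. The DG category of matrix factorisations of $f$ is $\ZZ/2$-graded, so the quasi-equivalence with $\dgSingCat{R}$ that you invoke only exists after folding the $\ZZ$-grading of $\dgSingCat{R}$ down to $\ZZ/2$; Morita invariance applied to a generator $G$ of the matrix-factorisation category then computes the Hochschild cohomology of the \emph{folded} category, whose degree-$0$ part is, by Dyckerhoff's theorem \cite{Dyc11}, the Milnor algebra $\CC\llbracket x,y,z,t\rrbracket/(\partial_xf,\dots,\partial_tf)$ and not the Tyurina algebra. The paper's second remark after the theorem makes precisely this point and stresses that the $\ZZ$-grading is essential, since the Milnor algebra does not determine the singularity. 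Consequently, ``forcing $f$ into the defining ideal'' is not bookkeeping inside the matrix-factorisation model: it is the entire content of the theorem, it is exactly the information destroyed by passing to the $\ZZ/2$-graded category, and your proposed HKR-type computation is left unexecuted there --- indeed, as set up via $\REnd{G}$ for a generator of the matrix-factorisation category, it would converge to the wrong answer. To make the computation land on the Tyurina algebra you must work with the genuinely $\ZZ$-graded enhancement, which is what the singular-Hochschild-cohomology comparison of \cite{Kel18b,Kel19} accomplishes; that is the route you should promote from an aside to the main argument.
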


\begin{remark}
  The proof of \Cref{thm:HK} relies on a comparison result \cite{Kel18b,Kel19}
  between the singular Hochschild cohomology (=Hochschild--Tate cohomology) of
  $R$ and the Hochschild cohomology of the DG category $\dgSingCat{R}$. The
  appearance of the Tyurina algebra stems from an earlier result of the Buenos
  Aires Cyclic Homology Group~\cite[Thm.~3.2.7]{BCHG92}. That the Tyurina
  algebra, together with the dimension of $R$, determines the isomorphism type
  of the singularity is a theorem of Mather and Yau~\cite{MY82}.
\end{remark}

\begin{remark}
  In~\cite{Dyc11}, Dyckerhoff shows that the $0$-th Hochschild cohomology of
  $\dgSingCat{R}$---viewed as a $\ZZ/2$-graded DG category---is isomorphic to
  the Milnor algebra
  \[
    \frac{\CC\llbracket x,y,z,t\rrbracket}{\left(\partial_xf,\partial_y
      f,\partial_z f,\partial_tf\right)}
  \]
  of the singularity (which does not determine the isomorphism type of the
  singularity, even if one knows the dimension). Thus, in~\Cref{thm:HK} it is
  crucial to consider $\dgSingCat{R}$ as a $\ZZ$-graded DG category.
\end{remark}

\begin{corollary}[Derived Donovan--Wemyss Conjecture]
  \label{cor:HK}
  Let $R_1$ and $R_2$ be isolated cDV singularities with crepant resolutions
  \[
    p_1\colon X_1\to \Spec*{R_1}\qquad\text{and}\qquad p_2\colon X_2\to
    \Spec*{R_2}.
  \]
  If the $2\ZZ$-derived contraction algebras $\DerCon*(p_1)$ and $\DerCon*(p_2)$ are
  quasi-isomorphic, then there is an isomorphism of algebras $R_1\cong R_2$.
\end{corollary}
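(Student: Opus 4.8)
The plan is to deduce \Cref{cor:HK} from \Cref{thm:HK} by upgrading a quasi-isomorphism of $2\ZZ$-derived contraction algebras to a quasi-equivalence of the ambient singularity DG categories. First I would recall that, for each $i=1,2$, the $2\ZZ$-derived contraction algebra $\DerCon*(p_i)=\REnd{N_i}$ is by construction the derived endomorphism DG algebra of a $2\ZZ$-cluster tilting object $N_i\in\dgSingCat{R_i}$, and that the paragraph preceding this corollary records a canonical quasi-equivalence of DG categories
\[
  \DerCat[c]{\DerCon*(p_i)}_{\mathrm{dg}}\stackrel{\sim}{\longrightarrow}\dgSingCat{R_i}.
\]
This is the key structural input: it realises $\dgSingCat{R_i}$ as the perfect derived DG category of the DG algebra $\DerCon*(p_i)$, so passing between the DG algebra and the DG category is a formal operation.

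The main step is then entirely formal: a quasi-isomorphism $\DerCon*(p_1)\simeq\DerCon*(p_2)$ of DG algebras induces a quasi-equivalence of their perfect derived DG categories $\DerCat[c]{\DerCon*(p_1)}_{\mathrm{dg}}\stackrel{\sim}{\longrightarrow}\DerCat[c]{\DerCon*(p_2)}_{\mathrm{dg}}$, since derived categories (and their canonical DG enhancements) are invariant under quasi-isomorphism of DG algebras—this is standard Morita theory for DG algebras in the sense of~\cite{Kel94,Kel06}. Composing with the two quasi-equivalences displayed above, one obtains a quasi-equivalence of DG categories
\[
  \dgSingCat{R_1}\stackrel{\sim}{\longrightarrow}\dgSingCat{R_2}.
\]
At this point \Cref{thm:HK} applies verbatim: its final assertion states precisely that a quasi-equivalence $\dgSingCat{R_1}\simeq\dgSingCat{R_2}$ forces an isomorphism of algebras $R_1\cong R_2$, which is the desired conclusion.

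I do not expect a genuine obstacle here, since the corollary is essentially a restatement of \Cref{thm:HK} once one knows how the DG algebra $\DerCon*$ and the DG category $\dgSingCat{R}$ are related. The only point requiring mild care is bookkeeping about DG enhancements: one must make sure the equivalence $\DerCat[c]{\DerCon*(p)}\simeq\SingCat{R}$ is genuinely induced by a quasi-equivalence of the specified DG enhancements rather than merely a triangulated equivalence, so that \Cref{thm:HK}—which is a statement about Hochschild cohomology of DG categories and hence sensitive only to the quasi-equivalence type—can be invoked. This compatibility is exactly what is asserted in the paragraph on $2\ZZ$-derived contraction algebras, so the proof reduces to chaining together the quasi-equivalences above. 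Accordingly, I would present the argument in two or three lines: invoke the canonical quasi-equivalences $\DerCat[c]{\DerCon*(p_i)}_{\mathrm{dg}}\simeq\dgSingCat{R_i}$, transport the hypothesised quasi-isomorphism $\DerCon*(p_1)\simeq\DerCon*(p_2)$ through them to get $\dgSingCat{R_1}\simeq\dgSingCat{R_2}$, and conclude by \Cref{thm:HK}.
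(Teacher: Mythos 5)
Your proposal is correct and follows exactly the same route as the paper's proof: invoke the canonical quasi-equivalences $\DerCat[c]{\DerCon*(p_i)}_{\mathrm{dg}}\simeq\dgSingCat{R_i}$, observe that a quasi-isomorphism of DG algebras induces a quasi-equivalence of their perfect derived DG categories, and conclude via \Cref{thm:HK}. The paper's proof is precisely this two-line argument, so no further comment is needed.
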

\begin{proof}
  Indeed, if the DG algebras $\DerCon*(p_1)$ and $\DerCon*(p_2)$ are
  quasi-isomorphic, then the DG categories
  \[
    \DerCat[c]{\DerCon*(p_1)}_{\mathrm{dg}}\simeq\dgSingCat{R_1}\qquad\text{and}\qquad
    \DerCat[c]{\DerCon*(p_2)}_{\mathrm{dg}}\simeq\dgSingCat{R_2}
  \]
  are quasi-equivalent. \Cref{thm:HK} then implies the existence of an
  isomorphism of algebras $R_1\cong R_2$.
\end{proof}

\section{Uniqueness of the $2\ZZ$-derived contraction algebra}

In this section we prove that $2\ZZ$-derived contraction algebras are determined up to
quasi-isomorphism by their zeroth cohomology plus a minimal amount of additional
algebraic data (see~\Cref{coro:rUMP_determines} for the precise statement).
Before that, we formulate a closely related result (\Cref{thm:JM-con}) that
states that two $2\ZZ$-derived contraction algebras whose zeroth cohomologies are
isomorphic as algebras must be quasi-isomorphic, and use this result to prove
\Cref{conj:DW}.

\subsection{Proof of the Donovan--Wemyss Conjecture}

In view of \Cref{cor:August,cor:HK}, \Cref{conj:DW} is an immediate consequence
of the following theorem, the proof of which is given in \Cref{sec:the_proof}.

\begin{theorem}
  \label{thm:JM-con}
  Let $R_1$ and $R_2$ be isolated cDV singularities with crepant resolutions
  \[
    p_1\colon X_1\to \Spec*{R_1}\qquad\text{and}\qquad p_2\colon X_2\to
    \Spec*{R_2}.
  \]
  If the contraction algebras $\Lambda(p_1)$ and $\Lambda(p_2)$ are isomorphic,
  then the $2\ZZ$-derived contraction algebras $\DerCon*(p_1)$ and $\DerCon*(p_2)$ are
  quasi-isomorphic.
\end{theorem}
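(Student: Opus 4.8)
The plan is to show that the quasi-isomorphism type of a $2\ZZ$-derived contraction algebra $\DerCon*$ is rigidly determined by its zeroth cohomology $\Con*$. Since $\stableCM*{R}\simeq\SingCat{R}$ is $2$-periodic and $2$-Calabi--Yau, and $N$ is a $2\ZZ$-cluster tilting object, the graded algebra $\dgH{\DerCon*}$ is $\Con*[\imath^{\pm1}]$ with $|\imath|=-2$; moreover $\Con*$ is a finite-dimensional symmetric algebra. By the homotopy transfer theorem, $\DerCon*$ is quasi-isomorphic to a minimal $A_\infty$-algebra structure $m=(0,0,0,m_3,m_4,\dots)$ on the graded algebra $A\coloneqq\Con*[\imath^{\pm1}]$, in the sense described in the preliminaries: a Maurer--Cartan element of total degree $2$ in the Hochschild complex $\HC{A}[A]$. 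Two $2\ZZ$-derived contraction algebras are quasi-isomorphic if and only if the corresponding minimal $A_\infty$-structures are related by an $A_\infty$-isomorphism. So the first step is to reduce the theorem to the following assertion: any two minimal $A_\infty$-structures $m,m'$ on $A=\Con*[\imath^{\pm1}]$ that arise as $2\ZZ$-derived contraction algebras of crepant resolutions are $A_\infty$-isomorphic, given a fixed algebra isomorphism $\dgH[0]{\DerCon*(p_1)}\cong\dgH[0]{\DerCon*(p_2)}$.

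The second step is to bring in the Derived Auslander--Iyama Correspondence of \cite{JKM22}. The point is that $\SingCat{R}\simeq\DerCat[c]{\DerCon*}$ is an algebraic, $2$-Calabi--Yau, $2$-periodic triangulated category containing the $2\ZZ$-cluster tilting object $N$ with $\sEnd[R]{N}\cong\Con*$; such an enhanced triangulated category, together with its cluster-tilting object, is the target of the correspondence, whose source is a piece of purely algebraic data attached to $\Con*$ (a ``$(-2)$-Calabi--Yau'' or self-injective enhancement datum). The correspondence asserts that this data recovers the DG enhancement $\dgSingCat{R}$, equivalently $\DerCon*$, up to quasi-equivalence. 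Thus the task becomes: show that the algebraic datum feeding the correspondence is itself determined by the isomorphism type of the algebra $\Con*$. Concretely this means controlling the obstruction theory for the higher products $m_n$ on $A=\Con*[\imath^{\pm1}]$: one shows inductively that, up to $A_\infty$-isomorphism, $m_3$ is pinned down by a distinguished Hochschild class (the ``restricted universal Massey product'' alluded to in the introduction), and that once $m_3$ is fixed all higher $m_n$ are forced. The relevant obstruction and ambiguity groups are the Hochschild cohomology groups $\HH[\bullet][*]{A}[A]$ in the appropriate total degrees; the Calabi--Yau duality $D\!\Hom{X}{Y}\cong\Hom{Y}{X[2]}$ together with $2$-periodicity constrains these groups enough that the inductive step goes through.

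The third step is bookkeeping: assemble the induction into an actual $A_\infty$-isomorphism $f=(f_1,f_2,f_3,\dots)$ with $f_1$ the given isomorphism $\Con*(p_1)\cong\Con*(p_2)$ extended $\imath$-linearly, solving \eqref{A-map} degree by degree using \Cref{braces_and_differential,braces_and_cup_product} to handle the brace/cup-product interactions, and conclude quasi-isomorphism of the DG algebras $\DerCon*(p_1)$ and $\DerCon*(p_2)$. Finally, combine with \Cref{cor:August} and \Cref{cor:HK} to deduce \Cref{conj:DW} as already indicated in the text.

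The main obstacle I expect is the second step: proving that the higher $A_\infty$-structure on $\Con*[\imath^{\pm1}]$ is rigid, i.e.\ that no genuine moduli of $A_\infty$-deformations survive beyond the single class $m_3$ can contribute. This is exactly where one must either invoke the Derived Auslander--Iyama Correspondence as a black box---so that the content is shifted into the hypotheses of \cite{JKM22} (separably generated ground field, the Krull--Schmidt and finiteness conditions, the $d=2$ case)---or else reproduce its obstruction-theoretic heart in this special setting. Verifying that the relevant Hochschild cohomology groups vanish (or are absorbed by $A_\infty$-isomorphisms) in all the degrees that matter, uniformly over the family of contraction algebras, is the delicate point; the symmetric-algebra structure on $\Con*$ and the $2$-Calabi--Yau property are the two inputs that make it work, and keeping track of the internal ($\imath$-)grading throughout is what prevents the argument from collapsing to the weaker $\ZZ/2$-graded statement (cf.\ Dyckerhoff's Milnor-algebra computation).
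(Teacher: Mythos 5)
Your outline has the right overall architecture---pass to a minimal $A_\infty$-model of $\DerCon*$ on the graded algebra $\Lambda[\imath^{\pm1}]$, isolate a single distinguished Hochschild class that controls the whole structure, and then build an $A_\infty$-isomorphism with prescribed linear part degree by degree---and this is indeed how the paper proceeds (it even gives a direct proof rather than citing the Derived Auslander--Iyama Correspondence as a black box, although it does import one key input from \cite{JKM22}). However, two load-bearing ideas are missing or misstated, and they are precisely what is needed to overcome the obstacle you yourself flag in your ``second step''. First, the distinguished class is $\class{m_4}$, not $\class{m_3}$: since $\Lambda[\imath^{\pm1}]$ is concentrated in even degrees and $m_n$ has degree $2-n$, all odd operations vanish identically, so $m_3=0$ and the first possibly non-trivial operation is $m_4$, of bidegree $(4,-2)$. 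More importantly, the property of the restricted class $j^*\!\class{m_4}$ that drives the entire argument is not that it is ``pinned down'' abstractly, but that it is a \emph{unit} in the Hochschild--Tate cohomology $\TateHH{\Lambda}[\Lambda[\imath^{\pm1}]]$---equivalently, as a Yoneda $4$-extension of $\Lambda$-bimodules it is represented by an exact sequence with projective middle terms (\Cref{thm:JM-rUMP_unit}, \Cref{cor:JM-rUMP_unit}). This is the genuinely non-trivial input; it is obtained from \cite[Cor.~4.5.17]{JKM22} by identifying the Geiss--Keller--Oppermann $4$-angulation on $\add*{N}$ with the one defined by the extension representing $j^*\!\class{m_4}$ and then applying the Auslander--Reiten projectivity criterion. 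Your proposal never identifies unitality as the property to be established, nor offers an argument for it.

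Second, the mechanism you propose for killing the obstructions---that the relevant Hochschild cohomology groups ``vanish (or are absorbed)'' thanks to Calabi--Yau duality and $2$-periodicity---is not what happens and would not work: $\uHH{\Lambda}[\Lambda]$ is very far from zero for a $4$-periodic symmetric algebra. The actual mechanism (\Cref{lem:contractible}, fed by the computation of $\HH{\Lambda[\imath^{\pm1}]}[\Lambda[\imath^{\pm1}]]$ in terms of $\uHH{\Lambda}[\Lambda]$, the class $\imath$ and the fractional Euler class $\EulerClass{2}$ in \Cref{prop:HH_isos}) is that bracketing with $\class{m_4}=u\cdot\imath$, where $u\in\uTateHH[4]{\Lambda}[\Lambda]$ is a unit, renders every obstruction cocycle in the induction a coboundary of the form $-\partial(c_{2n+1})-[m_4,c_{2n-1}]$; unitality is used in an essential way here. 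Two further steps are also absent from your outline: one must show that the two classes $j^*\!\class{m_4^{\DerCon*(p_1)}}$ and $j^*\!\class{m_4^{\DerCon*(p_2)}}$ can actually be made to agree, which the paper achieves via the transitive action of $Z(\Lambda)^\times$ on the set of units of bidegree $(4,-2)$ in the proof of \Cref{coro:rUMP_determines}; and one must check the uniqueness of the full Maurer--Cartan element with prescribed restriction (\Cref{cor:two_equations_determine}). If instead you insist on invoking the Derived Auslander--Iyama Correspondence wholesale---a legitimate alternative, since \Cref{thm:JM-con} is a special case of \cite[Thm.~5.1.10]{JKM22}---you must still verify its input, namely that $\dgH[-2]{\DerCon*}\cong\Lambda$ as a bimodule and that $\Omega^{4}_{\Lambda^e}(\Lambda)\cong\Lambda$ stably, which is again exactly the unitality statement your proposal does not address.
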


\begin{remark}
  \Cref{thm:JM-con} is a special case of~\cite[Thm.~5.1.10]{JKM22},
  see~\Cref{sec:der_AIC}.
\end{remark}

\begin{proof}[Proof of \Cref{conj:DW} using \Cref{thm:JM-con}] Let $R_1$ and
  $R_2$ be isolated cDV singularities with crepant resolutions
  \[
    p_1\colon X_1\to \Spec*{R_1}\qquad\text{and}\qquad p_2\colon X_2\to
    \Spec*{R_2}
  \]
  whose corresponding contraction algebras $\Con*(p_1)$ and $\Con*(p_2)$ are
  derived equivalent. In view of~\Cref{cor:August,cor:HK}, we may and we will
  assume that $\Con*(p_1)$ and $\Con*(p_2)$ are isomorphic and hence,
  by~\Cref{thm:JM-con}, the $2\ZZ$-derived contraction algebras~$\DerCon*(p_1)$ and
  $\DerCon*(p_2)$ are quasi-isomorphic. Finally, \Cref{cor:HK} yields the
  desired algebra isomorphism $R_1\cong R_2$.
\end{proof}

\subsection{The restricted universal Massey product}

The proof of \Cref{thm:JM-con} makes use of an invariant of the $2\ZZ$-derived
contraction algebra, a certain Hochschild cohomology class of bidegree $(4,-2)$
that we call the restricted universal Massey product. As we explain below, this
invariant is induced by the first possibly non-trivial higher operation on a
minimal $A_\infty$-algebra model of the $2\ZZ$-derived contraction algebra.

\begin{setting}
  Fix an isolated cDV singularity $R$ that admits a crepant resolution ${p\colon
        X\to\Spec*{R}}$, and let $\DerCon=\DerCon*(p)$ be the corresponding $2\ZZ$-derived
  contraction algebra so that $\dgH[0]{\DerCon}\cong\Con=\Con*(p)$ is the
  contraction algebra defined by Donovan and Wemyss. For simplicity, we treat
  the isomorphism of graded algebras
  \[
    \dgH{\DerCon}\cong\Con[\imath^{\pm1}]=\Con\otimes_\CC\CC[\imath^{\pm1}],\qquad|\imath|=-2,
  \]
  as an identification.
\end{setting}

Kadeishvili's Homotopy Transfer Theorem~\cite{Kad82} provides us with a
minimal $A_\infty$-algebra structure, unique up to
$A_\infty$-isomorphism with identity linear part,
\[
  B=(\Lambda[\imath^{\pm1}],m_3,m_4,m_5,\cdots)
\]
on the cohomology algebra $\Lambda[\imath^{\pm1}]$. Since
$\Lambda[\imath^{\pm1}]$ is concentrated in even degrees and, by definition,
\[
  m_n\colon \Lambda[\imath^{\pm1}]^{\otimes n}\longrightarrow
  \Lambda[\imath^{\pm1}]
\]
is a morphism of degree $2-n$, we conclude that $m_{n}=0$ whenever $n$ is odd.
We write
\[
  B=(\Lambda[\imath^{\pm1}],m_4,m_6,m_8,\cdots)
\]
as a way to record this observation. We refer to $B$ as a \emph{minimal
  ($A_\infty$-algebra) model} of the DG algebra $\DerCon$ and fix it for the
rest of the section.

\begin{remark}
  \label{rmk:DG_vs_A-infinity}
  The passage from DG to $A_\infty$-algebras is a matter of technical
  convenience: The homotopy theories of \emph{non-unital} DG and of $A_\infty$-algebras are
  equivalent, \cite[Thm.~11.4.8]{LV12}. In particular, two \emph{non-unital} DG algebras are quasi-isomorphic if and only if their minimal models are $A_\infty$-isomorphic \cite[Thms.~11.4.9 and 10.3.10]{LV12}. Here, we are exclusively interested in \emph{unital} DG algebras, but this is not a problem since by \cite[Prop.~6.2]{Mur14} two \emph{unital} DG algebras are quasi-isomorphic as \emph{non-unital} DG algebras if and only if they are quasi-isomorphic as \emph{unital} DG algebras.
\end{remark}

Consider now the bigraded Hochschild cochain complex
\[
  \HC[p][q]{\Lambda[\imath^{\pm1}]}[\Lambda[\imath^{\pm1}]]\coloneqq\Hom[\CC]{\Lambda[\imath^{\pm1}]^{\otimes
      p}}{\Lambda[\imath^{\pm1}](q)},\qquad p\geq 0,\ q\in\ZZ,
\]
recalled in \Cref{subsec:HH}. Since $m_3=0$, the $A_\infty$-equations imply that $\partial(m_4)=0$
(\cite[Lemme~B.4.1]{Lef03}); hence we obtain a class
\begin{equation}
  \class{m_4}=\class{m_4^{\DerCon}}\in\HH[4][-2]{\Lambda[\imath^{\pm1}]}[\Lambda[\imath^{\pm1}]]
\end{equation}
that we call the \emph{universal Massey product (of length $4$)}. It follows from
the definition of $A_\infty$-morphism (\cite[Lemme B.4.2]{Lef03}) that the class
$\class{m_4}$ does not depend on the choice of minimal model for $\DerCon$ and
hence the universal Massey product can and will be regarded as an invariant of
the latter DG algebra.

Consider now the graded-algebra morphism
$j\colon\Lambda\hookrightarrow\Lambda[\imath^{\pm1}]$ given by the inclusion of
the degree $0$ part. The morphism $j$ induces a restriction morphism on
Hochschild cohomology\footnote{In fact, the morphism $j^*$ is surjective,
  see~\cite[Prop.~4.6.9]{JKM22} and take $\sigma=\id$ and $d=2$ (which is even
  and hence no signs occur in the formulas therein).}
\[
  j^*\!\colon\HH{\Lambda[\imath^{\pm1}]}[\Lambda[\imath^{\pm1}]]\longrightarrow\HH{\Lambda}[\Lambda[\imath^{\pm1}]],
\]
where the space on the right is the Hochschild cohomology of $\Lambda$, viewed
as a graded algebra concentrated in degree $0$, with coefficients in the graded
$\Lambda$-bimodule $\Lambda[\imath^{\pm1}]$. In particular, since the degree
$-2$ component $\Lambda\cdot\imath$ of $\Lambda[\imath^{\pm1}]$ is isomorphic to
the diagonal $\Lambda$-bimodule, we obtain a class
\begin{equation}
  j^*\!\class{m_4}=j^*\!\class{m_4^{\DerCon}}\in\HH[4][-2]{\Lambda}[\Lambda[\imath^{\pm1}]]=\HH*[4]{\Lambda}[\Lambda\cdot\imath]=\Ext[\Lambda^e]{\Lambda}{\Lambda}[4],
\end{equation}
where $\Lambda^e=\Lambda\otimes_\CC\Lambda^\op$ is the eveloping algebra of
$\Lambda$; we call the class $j^*\!\class{m_4}$ the \emph{restricted universal
  Massey product (of length $4$)} and, as with the unrestricted version, we regard
it as an invariant of the $2\ZZ$-derived contraction algebra $\DerCon$. Notice also
that the previous discussion applies verbatim to any DG algebra $A$ whose
cohomology is isomorphic to the graded algebra $\Lambda[\imath^{\pm1}]$, so that
we may associate to $A$ its restricted universal Massey product
$j^*\!\class{m_4^A}$.

The following theorem is the first main step towards the proof
of~\Cref{thm:JM-con}.

\begin{theorem}
  \label{thm:JM-rUMP_unit}
  The restricted universal Massey product $j^*\!\class{m_4}$, when viewed as an
  element of the space $\Ext[\Lambda^e]{\Lambda}{\Lambda}[4]$ of Yoneda
  extensions of $\Lambda$-bimodules, can be represented by an exact sequence
  \[
    0\to\Lambda\to P_3\to P_2\to P_1\to P_0\to\Lambda\to 0
  \]
  with projective middle terms. In particular, ${\Omega_{\Lambda^e}^4(\Lambda)\cong\Lambda}$ in the
  stable category of $\Lambda$-bimodules.
\end{theorem}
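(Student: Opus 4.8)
The plan is to exploit the interpretation of $j^*\!\class{m_4}$ as the obstruction class governing the $A_\infty$-deformation of $\Lambda$ inside $B$, together with the $2\ZZ$-periodicity of the singularity category, which forces $\Omega^2$ to be (up to the shift by $\imath$) the identity on $\stableGproj(\Lambda^e)$-type categories. First I would unwind what $j^*\!\class{m_4}$ is as a Yoneda class: by definition $\class{m_4}\in\HH[4][-2]{\Lambda[\imath^{\pm1}]}[\Lambda[\imath^{\pm1}]]$ is the cohomology class of the first higher product $m_4$ of a minimal model $B$ of $\DerCon$, and $j^*$ restricts the coefficients to the sub-bimodule $\Lambda\cdot\imath\cong\Lambda$ (diagonal) along the graded-algebra inclusion $j\colon\Lambda\hookrightarrow\Lambda[\imath^{\pm1}]$. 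Under the standard identification $\HH*[4]{\Lambda}[\Lambda\cdot\imath]\cong\Ext[\Lambda^e]{\Lambda}{\Lambda}[4]$, a $4$-cocycle represents the class of an $n$-fold extension; the point is to produce a \emph{specific} such extension whose end terms are both $\Lambda$ and whose three middle terms are projective bimodules.

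The key input is the equivalence $\DerCat[c]{\DerCon}_{\mathrm{dg}}\simeq\dgSingCat{R}$ together with the $2$-periodicity $[2]\cong\id$ of $\SingCat{R}\simeq\stableCM*{R}$. Concretely, $\dgH{\DerCon}\cong\Lambda[\imath^{\pm1}]$ with $|\imath|=-2$ is $2$-periodic by construction, and the cup product with the invertible element $\imath\in\dgH[-2]{\DerCon}$ realises this periodicity on the level of the bimodule $\Lambda[\imath^{\pm1}]$. I would take a minimal projective bimodule resolution of $\Lambda$ over $\Lambda^e$ and truncate: the class $j^*\!\class{m_4}$, being in $\Ext[\Lambda^e]{\Lambda}{\Lambda}[4]$, corresponds to a map $\Omega_{\Lambda^e}^4(\Lambda)\to\Lambda$ in the stable category of $\Lambda$-bimodules, and the content of the theorem is precisely that this map is an \emph{isomorphism}. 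The natural way to see this is to identify $j^*\!\class{m_4}$ with the periodicity element: since $\Lambda=\dgH[0]{\DerCon}$ and the next nonvanishing cohomology of $\DerCon$ in negative degrees is $\Lambda\cdot\imath$ in internal degree $-2$, the universal Massey product $m_4$ is the obstruction to splitting off this copy of $\Lambda$, and it is forced to be an isomorphism by the invertibility of $\imath$. I would make this precise by using the Hochschild/Massey-product formalism recalled in Section~\ref{subsec:HH}: the relation $\partial(m_4)=0$ (from $m_3=0$) plus the next $A_\infty$-relation $\partial(m_6)+\text{(braces of }m_4\text{)}=0$ control the higher syzygies, and splicing two shifted copies of the relevant short resolutions yields the asserted length-$4$ exact sequence $0\to\Lambda\to P_3\to P_2\to P_1\to P_0\to\Lambda\to 0$.

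The main obstacle will be the identification of $j^*\!\class{m_4}$ with an \emph{invertible} bimodule homomorphism $\Omega_{\Lambda^e}^4(\Lambda)\to\Lambda$, rather than merely a nonzero one. This is where the symmetry of $\Lambda$ (noted in Section~\ref{sec:MCM_sing}: endomorphism algebras of basic objects in $\stableCM*{R}$ are symmetric) and the $2$-Calabi--Yau property must be used: $2$-CY-ness of $\SingCat{R}$ transports, via the equivalence with $\DerCat[c]{\DerCon}$, into a bimodule-level duality that pins down $\Omega^4_{\Lambda^e}(\Lambda)$ up to the twist encoded by $\imath$, and this twist is trivial because $\imath$ is a \emph{unit}. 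I expect the cleanest route is to first establish the periodic bimodule resolution abstractly (the DG algebra $\DerCon$ has $\dgH{\DerCon}$ a $2$-periodic graded algebra, so $\Lambda[\imath^{\pm1}]$ is $\Omega^2$-periodic over its enveloping algebra up to the internal shift), then reconcile the minimal $A_\infty$-model computation with it so that the periodicity isomorphism is \emph{witnessed} by $m_4$; the "in particular" clause $\Omega_{\Lambda^e}^4(\Lambda)\cong\Lambda$ is then immediate from the existence of the displayed exact sequence with projective middle terms.
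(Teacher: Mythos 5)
Your overall framing is right---the content of the theorem is exactly that the stable bimodule map $\Omega_{\Lambda^e}^4(\Lambda)\to\Lambda$ classified by $j^*\!\class{m_4}$ is an isomorphism---but the step you offer to establish this is the one that fails. You argue that the invertibility of $\imath$ in $\dgH{\DerCon}\cong\Lambda[\imath^{\pm1}]$ forces $m_4$ to ``witness'' the periodicity. It does not: the formal DG algebra $\Lambda[\imath^{\pm1}]$ (zero differential, no higher products) has exactly the same $2$-periodic cohomology with $\imath$ invertible, yet its restricted universal Massey product is $0$, which is a stable isomorphism only when $\Lambda$ is semisimple. So no argument using only the graded algebra $\dgH{\DerCon}$, its symmetry, or the $2$-Calabi--Yau duality can prove the theorem; those data at best give the \emph{existence} of some class realising $4$-periodicity of $\Lambda$, not that $j^*\!\class{m_4}$ is such a class. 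The indispensable input, which your proposal never invokes, is that $\DerCon$ is the derived endomorphism algebra of a $2\ZZ$-\emph{cluster tilting} object $T$---in particular that every object of $\SingCat{R}$ admits a two-term $\add*{T}$-resolution.

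The paper itself does not reprove the statement: it quotes \cite[Cor.~4.5.17]{JKM22}, and \Cref{rmk:thm:JM-rUMP_unit} sketches that proof. There the cluster-tilting condition equips $\add*{T}$ with a $4$-angulation $\square_{\mathrm{GKO}}$ in the sense of \cite{GKO13}; one shows, by a delicate comparison of Toda brackets with Massey products, that the class of $4$-angles determined by an extension representing $j^*\!\class{m_4}$ (with $P_0,P_1,P_2$ projective-injective but $P_3$ a priori unknown) coincides with $\square_{\mathrm{GKO}}$; and the projectivity of $P_3$ is then extracted from the resulting exactness properties via the Auslander--Reiten criterion \cite{AR91a} for detecting projective bimodules. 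If you want to salvage your outline, this is the mechanism you would need to substitute for the appeal to the invertibility of $\imath$.
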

\begin{proof}
  The first claim is a special case of~\cite[Cor.~4.5.17]{JKM22}. Indeed, by
  definition, the $2\ZZ$-derived contraction algebra $\DerCon$ is the derived
  endomorphism algebra of a $2\ZZ$-cluster tilting object in
  $\DerCat[c]{\DerCon}\simeq\SingCat{R}$, which is one of the equivalent
  conditions in~\emph{loc.~cit.} The second claim follows immediately from the
  first.
\end{proof}

\begin{remark}
  \label{rmk:thm:JM-rUMP_unit}
  The proof of~\cite[Cor.~4.5.17]{JKM22}, and hence that of
  \Cref{thm:JM-rUMP_unit}, is non-trivial. In the special case of the
  contraction algebra, it is possible that detailed knowledge of the first
  non-trivial higher operation $m_4$ of some minimal model of the $2\ZZ$-derived
  contraction algebra allows for establishig the desired property of the
  restricted universal Massey product $j^*\!\class{m_4}$ directly. The approach
  taken in~\cite{JKM22}, which deals with an abstract and more general
  situation, rather leverages the fact that $\Lambda$ is the endomorphism
  algebra of a $2\ZZ$-cluster tilting object $T\in\SingCat{R}$. The upshot is
  that the additive closure $\add*{T}$ of $T$ has an induced structure of a
  so-called \emph{$4$-angulated category}, that is $\add*{T}$ is equipped with a
  natural class of diagrams $\square_{\mathrm{GKO}}$, called \emph{$4$-angles},
  of the form\footnote{Recall that $[2]\cong\id$ in $\SingCat{R}$.}
  \[
    T_1\to T_2\to T_3\to T_4\to T_1[2]
  \]
  that satisfies axioms analogous to those of triangulated
  categories~\cite{GKO13}. On the other hand, an extension of
  $\Lambda$-bimodules
  \[
    0\to\Lambda\to P_3\to P_2\to P_1\to P_0\to\Lambda\to 0
  \]
  with $P_0,P_1,P_2$ projective-injective (but perhaps not $P_3$) that
  represents the class $j^*\!\class{m_4}\in\Ext[\Lambda^e]{\Lambda}{\Lambda}[4]$
  yields a class of $4$-angles $\square_{j^*\!\class{m_4}}$ defined in terms of
  certain exactness properties~\cite{Ami07,Lin19}; the class
  $\square_{j^*\!\class{m_4}}$ is \emph{a priori} not known to form a
  $4$-angulation of $\add*{T}$. The crux of the argument is then to prove that
  \[
    \square_{\mathrm{GKO}}=\square_{j^*\!\class{m_4}}
  \]
  so that the class $\square_{j^*\!\class{m_4}}$ is indeed a $4$-angulation of
  $\add*{T}$; this agreement relies on a delicate analysis of the relationship
  between Toda brackets, Massey products and the classes
  $\square_{\mathrm{GKO}}$ and $\square_{j^*\!\class{m_4}}$. Finally, in view of
  the exactness properties defining the class $\square_{j^*\!\class{m_4}}$
  (now known to be $4$-angulation), a
  theorem of Auslander and Reiten~\cite{AR91a} for detecting projective
  bimodules implies that $P_3$ must be a projective $\Lambda$-bimodule, which is
  what \Cref{thm:JM-rUMP_unit} claims. The reader is referred to~\cite{JKM22}
  for details.
\end{remark}

Recall that the contraction algebra is Frobenius (in fact, symmetric).
Consequently, its enveloping algebra is also a Frobenius algebra and we may
consider the \emph{Hochschild--Tate cohomology}
\[
  \TateHH{\Lambda}[\Lambda[\imath^{\pm1}]]\coloneqq\sExt[\Lambda^e]{\Lambda}{\Lambda[\imath^{\pm1}]}[\bullet,*]
\]
defined in terms of the extension spaces in the stable category of graded
$\Lambda$-bimodules; thus,
\[
  \HH[>0]{\Lambda}[\Lambda[\imath^{\pm1}]]=\TateHH[>0]{\Lambda}[\Lambda[\imath^{\pm1}]]
\]
and there is a surjection
\[
  \HH[0]{\Lambda}[\Lambda[\imath^{\pm1}]]\twoheadrightarrow\TateHH[0]{\Lambda}[\Lambda[\imath^{\pm1}]].
\]
The multiplication on $\Lambda[\imath^{\pm1}]$ endows
$\TateHH{\Lambda}[\Lambda[\imath^{\pm1}]]$ with the structure of a bigraded
algebra, see~\cite[Sec.~5]{Mur22} for details.

\begin{corollary}
  \label{cor:JM-rUMP_unit}
  The restricted universal Massey product $j^*\!\class{m_4}$, when viewed as an
  element of the Hochschild--Tate cohomology
  $\TateHH{\Lambda}[\Lambda[\imath^{\pm1}]]$, is a unit.
\end{corollary}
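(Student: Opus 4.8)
The plan is to deduce Corollary~\ref{cor:JM-rUMP_unit} directly from Theorem~\ref{thm:JM-rUMP_unit} by unwinding what it means for a class in the Hochschild--Tate cohomology ring $\TateHH{\Lambda}[\Lambda[\imath^{\pm1}]]$ to be a unit. First I would recall that, since $\Lambda$ is symmetric, its enveloping algebra $\Lambda^e$ is a Frobenius algebra, so the stable category of (graded) $\Lambda$-bimodules is triangulated with suspension given by the cosyzygy $\Omega^{-1}_{\Lambda^e}$, and the Hochschild--Tate cohomology ring is precisely the graded endomorphism ring $\bigoplus_{n}\sHom[\Lambda^e]{\Lambda}{\Omega^{-n}_{\Lambda^e}\Lambda[\imath^{\pm1}]}$ of the diagonal bimodule, computed in this stable category; multiplication is composition of stable maps (after using the algebra structure on $\Lambda[\imath^{\pm1}]$ to fold the internal $\imath$-degree back in). Under this identification, $j^*\!\class{m_4}$ lives in the component $\sExt[\Lambda^e]{\Lambda}{\Lambda[\imath^{\pm1}]}[4,-2]$, which by the remark that $\Lambda\cdot\imath\cong\Lambda$ as a bimodule is just $\sExt[\Lambda^e]{\Lambda}{\Lambda}[4]=\sHom[\Lambda^e]{\Lambda}{\Omega^{-4}_{\Lambda^e}\Lambda}$.

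The key step is then the translation of Theorem~\ref{thm:JM-rUMP_unit}: the class $j^*\!\class{m_4}$ is represented by an exact sequence $0\to\Lambda\to P_3\to P_2\to P_1\to P_0\to\Lambda\to 0$ with all middle terms projective, which is exactly the statement that, as a morphism $\Lambda\to\Omega^{-4}_{\Lambda^e}\Lambda$ in the stable category, $j^*\!\class{m_4}$ is the canonical isomorphism $\Omega^4_{\Lambda^e}\Lambda\simto\Lambda$ (equivalently its inverse) induced by this sequence — in other words it is an isomorphism in the stable bimodule category. An invertible homogeneous element of a graded ring whose underlying map is an isomorphism, with inverse the class of the reversed/spliced acyclic complex realising $\Omega^{-4}_{\Lambda^e}\Lambda\simeq\Lambda$, is by definition a unit of that ring; so I would simply observe that the stable isomorphism $\Lambda\simto\Omega^{-4}_{\Lambda^e}\Lambda$ has a two-sided inverse in $\TateHH{\Lambda}[\Lambda[\imath^{\pm1}]]$ given by the class in bidegree $(-4,2)$ obtained from the inverse stable isomorphism, and that the composite in either order is the identity of $\Lambda$, i.e.\ the unit $1\in\TateHH[0,0]{\Lambda}[\Lambda[\imath^{\pm1}]]$. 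This uses the compatibility of the ring structure on Hochschild--Tate cohomology (from the multiplication on $\Lambda[\imath^{\pm1}]$, \cite[Sec.~5]{Mur22}) with composition of stable maps, together with the fact that the Tate cohomology in positive Hochschild degree agrees with ordinary Hochschild cohomology, so that passing $j^*\!\class{m_4}$ from $\HH[4][-2]{\Lambda}[\Lambda[\imath^{\pm1}]]$ to $\TateHH[4][-2]{\Lambda}[\Lambda[\imath^{\pm1}]]$ does not lose information.

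I expect the only real subtlety — hence the step to be spelled out carefully — to be the identification of the \emph{product} structure on $\TateHH{\Lambda}[\Lambda[\imath^{\pm1}]]$ with composition of stable bimodule maps under the cosyzygy shift, so that "represented by an acyclic complex with projective middle terms" genuinely translates into "invertible in the ring". Concretely one must check that the Yoneda/splice product of the $4$-extension representing $j^*\!\class{m_4}$ with the analogously constructed $(-4)$-cosyzygy class collapses, via the projectivity of $P_0,P_1,P_2,P_3$, to the trivial (identity) self-extension of $\Lambda$ in both orders; this is a diagram-chase using the long exact sequence of the Frobenius structure and is where the hypothesis "projective middle terms" from Theorem~\ref{thm:JM-rUMP_unit} is indispensable. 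Everything else is formal once this dictionary between $\TateHH{\Lambda}[\Lambda[\imath^{\pm1}]]$ and the stable endomorphism ring of the diagonal bimodule is in place.
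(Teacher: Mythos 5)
Your argument is correct and follows essentially the same route as the paper: the corollary is deduced from \Cref{thm:JM-rUMP_unit} together with the identification of the units of positive Hochschild degree in $\TateHH{\Lambda}[\Lambda[\imath^{\pm1}]]$ with the classes represented by exact sequences of bimodules with projective middle terms. The only difference is that the paper outsources this identification---including the compatibility of the cup product with composition of stable bimodule maps under cosyzygy shift, which you rightly flag as the one point requiring care---to \cite[Prop.~5.7 and Rmk.~5.8]{Mur22}, whereas you sketch that dictionary by hand.
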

\begin{proof}
  Immediate from~\Cref{thm:JM-rUMP_unit} and~\cite[Prop.~5.7 and
    Rmk.~5.8]{Mur22}, which characterises the units in
  $\TateHH{\Lambda}[\Lambda[\imath^{\pm1}]]$ of positive Hochschild
  (=horizontal) degree.
\end{proof}

\begin{remark}
  In~\Cref{cor:JM-rUMP_unit} it is essential to pass from Hochschild to
  Hochschild--Tate cohomology in order to have units of positive Hochschild
  degree.
\end{remark}

\subsection{Hochschild cohomology of the graded contraction algebra}
\label{subsec:HH_Con}

In this section we compute the Hochschild cohomology of the graded algebra
\[\Con*[\imath^{\pm1}]=\Lambda[\imath^{\pm1}]=\Lambda\otimes_\CC\CC[\imath^{\pm1}],\qquad \abs{\imath}=-2,\]
that we call \emph{graded contraction algebra}, in terms of the Hochschild
cohomology of the Dononvan--Wemyss contraction algebra $\Con*=\Lambda$.

First, notice that $\imath$ lies in the (graded) centre of
$\Lambda[\imath^{\pm1}]$, which is
\[
  Z(\Lambda[\imath^{\pm1}])=\HH[0]{\Lambda[\imath^{\pm1}]}[\Lambda[\imath^{\pm1}]];
\]
hence
\[\imath\in \HH[0][-2]{\Lambda[\imath^{\pm1}]}[\Lambda[\imath^{\pm1}]].\]
We introduce the \emph{fractional Euler derivation}
\[\EulerDer{2}\in\HC[1][0]{\Lambda[\imath^{\pm1}]}[\Lambda[\imath^{\pm1}]],\]
which acts by the formula
\[
  \EulerDer{2}\colon a\longmapsto\tfrac{|a|}{2}a,
\]
where we observe that $\frac{|a|}{2}$ is an integer since $\Lambda[\imath^{\pm1}]$
is concentrated in even degrees. It is a cocycle with cohomology class
\[\EulerClass{2}\in\HH[1][0]{\Lambda[\imath^{\pm1}]}[\Lambda[\imath^{\pm1}]].\]

\begin{proposition}\label{prop:HH_isos}
  The following statements hold:
  \begin{enumerate}
    \item\label{it:bracket} There is an isomorphism of graded commutative algebras
          \[
            \HH{\Lambda[\imath^{\pm1}]}[\Lambda[\imath^{\pm1}]]\cong\uHH{\Lambda}[\Lambda][\imath^{\pm1},\EulerClass{2}].
          \]
          The graded Lie algebra structure on the right hand side is induced by the
          (usual) Lie algebra structure on $\HH{\Lambda}[\Lambda]$ by setting
          \begin{align*}
            [\imath, \uHH{\Lambda}[\Lambda]]         & =0, & [\imath,\imath]         & =0,       \\
            [\EulerClass{2}, \uHH{\Lambda}[\Lambda]] & =0, & [\EulerClass{2},\imath] & =-\imath.
          \end{align*}
    \item\label{it:HH_iso} There is an isomorphism of graded algebras
          \[
            \HH{\Lambda}[\Lambda[\imath^{\pm1}]]\cong \uHH{\Lambda}[\Lambda][\imath^{\pm1}].
          \]
          Moreover, the morphism
          \[
            j^*\colon\HH{\Lambda[\imath^{\pm1}]}[\Lambda[\imath^{\pm1}]]\longrightarrow\HH{\Lambda}[\Lambda[\imath^{\pm1}]]
          \]
          induced by the inclusion $j\colon \Lambda\hookrightarrow
            \Lambda[\imath^{\pm1}]$ of the degree $0$ part is the apparent natural
          projection with kernel the graded ideal generated by $\EulerClass{2}$.
    \item\label{it:TateHH_iso} There is an isomorphism of graded algebras
          \[
            \TateHH{\Lambda}[\Lambda[\imath^{\pm1}]]\cong \uTateHH{\Lambda}[\Lambda]
            [\imath^{\pm1}].
          \]
          Furthermore, the comparison map
          \[
            \HH{\Lambda}[\Lambda[\imath^{\pm1}]]\longrightarrow\TateHH{\Lambda}[\Lambda[\imath^{\pm1}]]
          \]
          is the apparent extension of the comparison map $\uHH{\Lambda}[\Lambda] \to
            \uTateHH{\Lambda}[\Lambda]$.
  \end{enumerate}
\end{proposition}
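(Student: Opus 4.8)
We deduce all three statements from a Künneth decomposition of the Hochschild complex, exploiting the factorisation $\Lambda[\imath^{\pm 1}]=\Lambda\otimes_\CC\CC[\imath^{\pm 1}]$. The key input is a small model for the Hochschild cohomology of the second factor: the diagonal bimodule over $\CC[\imath^{\pm 1}]$ has the two-term Koszul resolution
\[
  0\to \CC[\imath^{\pm 1}]^{e}\xrightarrow{\ \imath\otimes 1-1\otimes\imath\ }\CC[\imath^{\pm 1}]^{e}\to\CC[\imath^{\pm 1}]\to 0
\]
(the left-hand copy shifted in internal degree so the differential is homogeneous of degree $0$). Applying $\operatorname{Hom}_{\CC[\imath^{\pm 1}]^{e}}(-,\CC[\imath^{\pm 1}])$, and noting that $\imath\otimes 1-1\otimes\imath$ acts by $0$ on the diagonal, one sees that $\HH{\CC[\imath^{\pm 1}]}[\CC[\imath^{\pm 1}]]$ is free of rank one over $\CC[\imath^{\pm 1}]$ in each of Hochschild degrees $0$ and $1$, the degree-$1$ generator being the class $\EulerClass{2}$ of bidegree $(1,0)$ represented by the Euler derivation $\imath^{n}\mapsto -n\,\imath^{n}$. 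Its Gerstenhaber bracket is determined by $[\imath,\imath]=0$, $[\EulerClass{2},\EulerClass{2}]=0$ (the cocycle $\EulerDer{2}$ has shifted degree $0$) and $[\EulerClass{2},\imath]=\EulerDer{2}(\imath)=-\imath$, and since $\cchar{\CC}=0$ graded-commutativity forces $\EulerClass{2}^{2}=0$; so, as a Gerstenhaber algebra, $\HH{\CC[\imath^{\pm 1}]}[\CC[\imath^{\pm 1}]]\cong\CC[\imath^{\pm 1}][\EulerClass{2}]$ with $\imath$ of bidegree $(0,-2)$ and $\EulerClass{2}$ of bidegree $(1,0)$.

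For the first statement, choose a resolution $P_{\bullet}\to\Lambda$ of the diagonal bimodule by \emph{finitely generated} projective $\Lambda^{e}$-modules (possible since $\Lambda$ is finite dimensional) and let $Q_{\bullet}\to\CC[\imath^{\pm 1}]$ be the Koszul resolution above. Then $P_{\bullet}\otimes_{\CC}Q_{\bullet}\to\Lambda[\imath^{\pm 1}]$ is a projective resolution over $(\Lambda[\imath^{\pm 1}])^{e}=\Lambda^{e}\otimes_{\CC}\CC[\imath^{\pm 1}]^{e}$, and because each $P_{i}$ and each $Q_{j}$ is finitely generated the canonical comparison map
\[
  \operatorname{Hom}_{\Lambda^{e}}(P_{\bullet},\Lambda)\otimes_{\CC}\operatorname{Hom}_{\CC[\imath^{\pm 1}]^{e}}(Q_{\bullet},\CC[\imath^{\pm 1}])\longrightarrow\operatorname{Hom}_{(\Lambda[\imath^{\pm 1}])^{e}}\bigl(P_{\bullet}\otimes_{\CC}Q_{\bullet},\,\Lambda[\imath^{\pm 1}]\bigr)
\]
is an isomorphism of complexes, compatible with the cup product and the Gerstenhaber bracket (this is the Künneth isomorphism for Hochschild cohomology of a tensor product over a field, applicable here precisely because $\CC[\imath^{\pm 1}]$ admits a finite, degreewise finitely generated resolution of its diagonal). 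Passing to cohomology and using that $\CC$ is a field yields the algebra isomorphism $\HH{\Lambda[\imath^{\pm 1}]}[\Lambda[\imath^{\pm 1}]]\cong\uHH{\Lambda}[\Lambda]\otimes_{\CC}\CC[\imath^{\pm 1}][\EulerClass{2}]=\uHH{\Lambda}[\Lambda][\imath^{\pm 1},\EulerClass{2}]$ (noting $\uHH{\Lambda}[\Lambda]$ is concentrated in internal degree $0$ as $\Lambda$ is), together with the tensor-product Gerstenhaber bracket, i.e.\ the usual one on $\uHH{\Lambda}[\Lambda]$ plus $[\imath,-]=0$, $[\EulerClass{2},\uHH{\Lambda}[\Lambda]]=0$ and $[\EulerClass{2},\imath]=-\imath$. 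That the global Euler cocycle $\EulerDer{2}\colon a\mapsto\tfrac{|a|}{2}a$ on $\Lambda[\imath^{\pm 1}]$ represents $\EulerClass{2}$ is then clear, since $\EulerDer{2}=1\otimes(\imath^{n}\mapsto -n\,\imath^{n})$ under the decomposition.

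For the second and third statements, decompose the coefficient bimodule as $\Lambda[\imath^{\pm 1}]=\bigoplus_{n\in\ZZ}\Lambda\,\imath^{n}$, each summand being the diagonal $\Lambda$-bimodule placed in internal degree $-2n$. Since $\Lambda$ is finite dimensional, its diagonal has a resolution by finitely generated projective bimodules, so the functor $M\mapsto\HH{\Lambda}[M]$ commutes with arbitrary direct sums in the coefficient bimodule; likewise $M\mapsto\TateHH{\Lambda}[M]$, using a complete resolution of $\Lambda$ over $\Lambda^{e}$ by finitely generated projectives, which exists because $\Lambda^{e}$ is a finite-dimensional symmetric (hence self-injective) algebra. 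This gives the graded-algebra isomorphisms $\HH{\Lambda}[\Lambda[\imath^{\pm 1}]]\cong\uHH{\Lambda}[\Lambda][\imath^{\pm 1}]$ and $\TateHH{\Lambda}[\Lambda[\imath^{\pm 1}]]\cong\uTateHH{\Lambda}[\Lambda][\imath^{\pm 1}]$ (multiplicativity being that of the algebra object $\Lambda\otimes_{\CC}\CC[\imath^{\pm 1}]$ in $\Lambda$-bimodules, or read off from the first statement via $j^{*}$), and the comparison map $\HH{\Lambda}[\Lambda[\imath^{\pm 1}]]\to\TateHH{\Lambda}[\Lambda[\imath^{\pm 1}]]$ is the $\imath^{\pm 1}$-linear extension of $\uHH{\Lambda}[\Lambda]\to\uTateHH{\Lambda}[\Lambda]$ by naturality. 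Finally, the restriction $j^{*}$ along $j\colon\Lambda\hookrightarrow\Lambda[\imath^{\pm 1}]$ fixes the $0$-cochain $\imath$ and the classes coming from $\uHH{\Lambda}[\Lambda]$, and annihilates $\EulerDer{2}$ already at the cochain level because $\EulerDer{2}|_{\Lambda}=0$ ($\Lambda$ being in degree $0$); being an algebra homomorphism with these prescribed values on generators, $j^{*}$ is exactly the projection with kernel the ideal $(\EulerClass{2})$.

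The step requiring the most care is the Künneth isomorphism in the first statement: the factor $\CC[\imath^{\pm 1}]$ is infinite dimensional over $\CC$, so one cannot simply tensor the bar complexes — the infinite products defining Hochschild cochains do not commute with $\otimes_{\CC}$ — and this is precisely why one must replace the bar resolution by the degreewise finitely generated Koszul resolution. The remaining verifications (compatibility of the comparison map with the cup product and the brace operations of \Cref{braces_and_differential,braces_and_cup_product}, and matching the Koszul-dual generator with $\EulerDer{2}$) are routine, and characteristic $0$ is used both for $\EulerClass{2}^{2}=0$ and for the integrality of $\tfrac{|a|}{2}$.
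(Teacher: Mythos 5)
Your argument is correct, but it takes a genuinely different route from the paper. The paper works entirely at the cochain level on the Hochschild complex itself: it identifies the subcomplex $\RHC{\Lambda[\imath^{\pm1}]}[\Lambda[\imath^{\pm1}]]$ of $\CC[\imath^{\pm1}]$-linear cochains, shows that $j^*$ restricted to it is an \emph{isomorphism} of DG algebras onto $\uHC{\Lambda}[\Lambda][\imath^{\pm1}]$, and then splits the full complex by the explicit quasi-isomorphism $x\mapsto j^*(x)-\imath^{-1}\cdot j^*([x,\imath])\cdot\EulerDer{2}$; the bracket formulas are then checked directly by degree considerations. You instead invoke the K\"unneth decomposition for $\Lambda\otimes_\CC\CC[\imath^{\pm1}]$ with the two-term Koszul resolution of the diagonal over $\CC[\imath^{\pm1}]^e$. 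What your approach buys is conceptual clarity (the appearance of $\CC[\imath^{\pm1}][\EulerClass{2}]$ is just $\operatorname{HH}$ of the Laurent factor) and an automatic proof of \eqref{it:HH_iso} and \eqref{it:TateHH_iso} by additivity in the coefficients; what the paper's approach buys is explicit cochain-level identifications (e.g.\ that every class in the image of $i$ is represented by a $\CC[\imath^{\pm1}]$-linear cocycle), which are reused as identifications in \Cref{prop:periodicity} and in the inductive construction in the proof of \Cref{thm:rUMP_ThmB}. One caveat: the compatibility of the K\"unneth isomorphism with the Gerstenhaber \emph{bracket} is not among the ``routine verifications''---cup-product compatibility is classical (Eilenberg--Rosenberg--Zelinsky, \cite{ERZ57}), but bracket compatibility for tensor products is a separate, non-trivial theorem (Le--Zhou), since the bracket is defined via braces on the bar complex and does not transport along arbitrary resolution comparisons for free. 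For the specific formulas $[\imath,-]=0$, $[\EulerClass{2},\uHH{\Lambda}[\Lambda]]=0$, $[\EulerClass{2},\imath]=-\imath$ you could instead verify them directly on the explicit cocycles $\imath$ and $\EulerDer{2}$, which is what the paper does; with that substitution (or an explicit citation for the Gerstenhaber K\"unneth theorem) your proof is complete.
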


\begin{proof}
  All of the forthcoming claims follow from the proof of
  \cite[Prop.~4.6.9]{JKM22} for $\sigma=\id[\Lambda]$ and $d=2$.

  \eqref{it:bracket} The Hochschild
  complex $\HC{\Lambda[\imath^{\pm1}]}[\Lambda[\imath^{\pm1}]]$ contains the
  subcomplex
  \[\RHC{\Lambda[\imath^{\pm1}]}[\Lambda[\imath^{\pm1}]]\]
  of $\CC[\imath^{\pm1}]$-linear cochains; this subcomplex is also an
  associative subalgebra and a Lie subalgebra of the $\CC$-linear Hochschild
  complex.

  The composite
  \[\RHC{\Lambda[\imath^{\pm1}]}[\Lambda[\imath^{\pm1}]]\stackrel{i}{\hookrightarrow}
    \HC{\Lambda[\imath^{\pm1}]}[\Lambda[\imath^{\pm1}]]\stackrel{j^*}{\longrightarrow}
    \HC{\Lambda}[\Lambda[\imath^{\pm1}]],\] of the inclusion of the
  $\CC[\imath^{\pm1}]$-linear Hochschild cochains into the $\CC$-linear ones with
  the restriction of scalars along the inclusion $j\colon\Lambda\hookrightarrow
    \Lambda[\imath^{\pm1}]$ of the degree $0$ part is an \emph{isomorphism} of DG
  algebras. The target, unlike the source, does not \emph{a priori} carry any
  Lie algebra structure. Nevertheless, there is an obvious isomorphism of DG
  algebras
  \[
    \HC{\Lambda}[\Lambda[\imath^{\pm1}]]\cong\uHC{\Lambda}[\Lambda][\imath^{\pm1}]
  \]
  that we regard as an identification, and the composite isomorphism
  \[
    \RHC{\Lambda[\imath^{\pm1}]}[\Lambda[\imath^{\pm1}]]\cong\uHC{\Lambda}[\Lambda][\imath^{\pm1}]
  \]
  is also a Lie algebra map when we regard the target as a Lie algebra extension
  of $\uHC{\Lambda}[\Lambda]$ with $\imath$ a central element (in the
  Lie-algebra sense).

  The morphism
  \begin{align*}
    \HC{\Lambda[\imath^{\pm1}]}[\Lambda[\imath^{\pm1}]] & \longrightarrow		\uHC{\Lambda}[\Lambda] [\imath^{\pm1},\EulerDer{2}],  \\
    x                                                   & \longmapsto j^*(x)-\imath^{-1}\cdot j^*([x,\imath])\cdot \EulerDer{2},
  \end{align*}
  is a quasi-isomorphism of DG algebras with quasi-inverse
  \begin{align*}
    \uHC{\Lambda}[\Lambda] [\imath^{\pm1}]\oplus \uHC{\Lambda}[\Lambda] [\imath^{\pm1}]\cdot \EulerDer{2} & \longrightarrow			\HC{\Lambda[\imath^{\pm1}]}[\Lambda[\imath^{\pm1}]], \\
    x+y\cdot\EulerDer{2}                                                                                  & \longmapsto						i(x)+i(y)\cdot\EulerDer{2}.
  \end{align*}
  (The latter is just a morphism of complexes since $\EulerDer{2}^2\neq0$ in the
  target, it only vanishes in cohomology.) In fact, the relevant composite equals the
  identity of $\uHC{\Lambda}[\Lambda] [\imath^{\pm1},\EulerDer{2}]$. The Lie
  bracket formulas in the statement of the proposition follow from the
  definition of the fractional Euler class, $\CC[\imath^{\pm1}]$-linear cochains
  and degree considerations.

  \eqref{it:HH_iso} The statement follows easily from the previous
  computations.

  \eqref{it:TateHH_iso} The statement is consequece of the fact that
  $\TateHH{\Lambda}[\Lambda]$ is obtained from
  $\HH{\Lambda}[\Lambda]$ by inverting any element of
  \[
    \uHH[4]{\Lambda}[\Lambda]=\Ext[\Lambda^e]{\Lambda}{\Lambda}[4]
  \]
  representing the $4$-periodicty of $\Lambda$, and
  similarly when the coefficients lie in $\Lambda[\imath^{\pm1}]$.
\end{proof}

Below, we use the isomorphisms in \Cref{prop:HH_isos} as identifications.

\begin{corollary}\label{cor:two_equations_determine}
  Let $u\in\uTateHH[4]{\Lambda}[\Lambda]$ be a unit. There exists a unique
  Hochschild class
  \[
    m\in\HH[4][-2]{\Lambda[\imath^{\pm1}]}[\Lambda[\imath^{\pm1}]]
  \]
  such that
  \begin{align*}
    j^*(m)            & =u\cdot\imath, &
    \tfrac{1}{2}[m,m] & =0.
  \end{align*}
\end{corollary}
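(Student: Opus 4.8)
The plan is to carry out the entire argument inside the bigraded Gerstenhaber algebra $\HH{\Lambda[\imath^{\pm1}]}[\Lambda[\imath^{\pm1}]]$, which by \Cref{prop:HH_isos}~\eqref{it:bracket} is identified with $\uHH{\Lambda}[\Lambda][\imath^{\pm1},\EulerClass{2}]$. Since $\EulerClass{2}$ has odd total degree $1$ and the cup product on Hochschild cohomology is graded commutative, $\EulerClass{2}^2=0$, so this algebra is free of rank two over $\uHH{\Lambda}[\Lambda][\imath^{\pm1}]$ on $\{1,\EulerClass{2}\}$. Recalling that $\imath$ has bidegree $(0,-2)$, that $\EulerClass{2}$ has bidegree $(1,0)$, and that $\uHH[n]{\Lambda}[\Lambda]$ sits in bidegree $(n,0)$, the component of bidegree $(4,-2)$ is
\[
  \HH[4][-2]{\Lambda[\imath^{\pm1}]}[\Lambda[\imath^{\pm1}]]=\uHH[4]{\Lambda}[\Lambda]\cdot\imath\oplus\uHH[3]{\Lambda}[\Lambda]\cdot\imath\EulerClass{2}.
\]
By \Cref{prop:HH_isos}~\eqref{it:HH_iso} the morphism $j^*$ is the projection with kernel the ideal generated by $\EulerClass{2}$; hence every $m$ with $j^*(m)=u\cdot\imath$ has the form $m=u\imath+\beta\imath\EulerClass{2}$ for a unique $\beta\in\uHH[3]{\Lambda}[\Lambda]$, so that $\beta$ is the sole remaining unknown.

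Next I would expand $\tfrac12[m,m]$ as a function of $\beta$, using bilinearity, the Leibniz rule for the Gerstenhaber bracket over the cup product, the bracket relations of \Cref{prop:HH_isos}~\eqref{it:bracket}, and the auxiliary vanishings $\EulerClass{2}^2=0=\beta\cdot\beta$, $[\EulerClass{2},\EulerClass{2}]=0$ and $[\imath,\EulerClass{2}]=\imath$ (all immediate from graded (anti)commutativity and from $[\EulerClass{2},\imath]=-\imath$). Keeping track of signs, one arrives at
\[
  \tfrac12[m,m]=\bigl(\tfrac12[u,u]-u\cdot\beta\bigr)\cdot\imath^2+[u,\beta]\cdot\imath^2\EulerClass{2}\in\uHH[7]{\Lambda}[\Lambda]\cdot\imath^2\oplus\uHH[6]{\Lambda}[\Lambda]\cdot\imath^2\EulerClass{2}.
\]
Comparing the $\EulerClass{2}$-free and the $\EulerClass{2}$-linear components and using that $\imath$ is invertible, the equation $\tfrac12[m,m]=0$ is equivalent to the pair
\[
  u\cdot\beta=\tfrac12[u,u]\ \text{ in }\uHH[7]{\Lambda}[\Lambda],\qquad [u,\beta]=0\ \text{ in }\uHH[6]{\Lambda}[\Lambda].
\]

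Since $\Lambda^e$ is Frobenius, $\uHH[n]{\Lambda}[\Lambda]=\uTateHH[n]{\Lambda}[\Lambda]$ for all $n\geq1$. As $u$ is a unit of $\uTateHH{\Lambda}[\Lambda]$, the first equation has the unique solution $\beta=\tfrac12\,u^{-1}[u,u]$; a priori an element of $\uTateHH[3]{\Lambda}[\Lambda]$, it lies in $\uHH[3]{\Lambda}[\Lambda]$ because $3\geq1$, hence is a genuine class in the ordinary Hochschild cohomology where $\beta$ must live. This already yields uniqueness and reduces existence to checking that this particular $\beta$ satisfies $[u,\beta]=0$. For that I would apply the Leibniz rule once more:
\[
  \bigl[u,\,u^{-1}[u,u]\bigr]=[u,u^{-1}]\cdot[u,u]+u^{-1}\cdot[u,[u,u]].
\]
The second summand vanishes because $[u,[u,u]]=0$ by the graded Jacobi identity (in characteristic $0$ this gives $3[u,[u,u]]=0$); the first vanishes because differentiating $u\cdot u^{-1}=1$ with the derivation $[u,-]$ gives $[u,u^{-1}]=-u^{-2}[u,u]$, whence $[u,u^{-1}]\cdot[u,u]=-u^{-2}[u,u]^2=0$ by graded commutativity of the cup product, the class $[u,u]$ having odd total degree $7$. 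Therefore $[u,\beta]=\tfrac12\bigl[u,\,u^{-1}[u,u]\bigr]=0$, which completes the argument.

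The crux of the proof---and the only step that is not bookkeeping---is this last verification that the solution $\beta=\tfrac12u^{-1}[u,u]$ of the linear equation $u\beta=\tfrac12[u,u]$ automatically annihilates $[u,\beta]$; it hinges on the two characteristic-$0$ identities $[u,[u,u]]=0$ and $[u,u]\cdot[u,u]=0$ valid for a unit of even total degree in a Gerstenhaber algebra. Everything else---the precise signs in the formula for $\tfrac12[m,m]$, and the fact that no higher powers of $\EulerClass{2}$ and no stray powers of $\imath$ contribute in the bidegrees at issue---follows mechanically from \Cref{prop:HH_isos}.
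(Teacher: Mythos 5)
Your argument is correct and follows essentially the same route as the paper's proof: the first equation forces $m=(u+x\cdot\EulerClass{2})\cdot\imath$ with $x\in\uHH[3]{\Lambda}[\Lambda]$ the only unknown, the bracket equation splits into the pair $u\cdot x=\tfrac12[u,u]$ and $[x,u]=0$, the first is solved uniquely by $x=\tfrac12u^{-1}[u,u]$ using invertibility of $u$ in Hochschild--Tate cohomology, and the second is then verified via the graded Jacobi identity together with $[u,u]^2=0$ (as $[u,u]$ has odd total degree). The only cosmetic difference is that you check $[u,x]=0$ by applying the Leibniz rule to $[u,u^{-1}[u,u]]$ directly, whereas the paper expands $[ux,u]$ starting from $\tfrac12[[u,u],u]=0$; these are the same computation.
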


\begin{proof}
  The first equation in the statement is equivalent to $m$ being of the form
  \begin{equation}
    \label{eq:m_in_terms_of_u}
    m=(u + x\cdot \EulerClass{2})\cdot \imath
  \end{equation}
  for some $x\in\uHH[3]{\Lambda}[\Lambda]$. Using the relations in a Gerstenhaber algebra, the second equation is equivalent to
  \begin{align*}
    0
     & = ([u,u]-2u\cdot x) \cdot \imath^2
    -2[x,u] \cdot \imath^2\cdot \EulerClass{2}.
  \end{align*}
  This means that both summands must vanish. For the first one, this is equivalent to
  \[x=\tfrac{1}{2}u^{-1}[u,u].\]
  This takes place in the piece of $\uHH{\Lambda}[\Lambda]$ that agrees with $\uTateHH{\Lambda}[\Lambda]$, and is compatible with the second summand since
  \begin{align*}
    0 & = \tfrac{1}{2}[[u,u],u]=[u x,u]=u [x,u]-[u,u] x=u [x,u]-u^{-1}[u,u]^2=u [x,u],
  \end{align*}
  so $[x,u]=0$. The first step follows from the graded Jacobi identity and we also
  use that $[u,u]^2=0$ since $[u,u]$ has odd total degree and Hochschild
  cohomology is graded commutative.
\end{proof}

The following result should be compared with equation
\eqref{eq:m_in_terms_of_u}; its proof is similar to that of \Cref{cor:two_equations_determine}.

\begin{corollary}\label{lem:contractible}
  Let $u\in\uTateHH[4]{\Lambda}[\Lambda]$ be a unit such that $[u,u]=0$. Given $$(x+y\cdot\EulerClass{2} )\cdot\imath^q\in\HH[p][-2q]{\Lambda[\imath^{\pm1}]}[\Lambda[\imath^{\pm1}]]$$
  with $p\geq 2$, $x\in\uHH[p]{\Lambda}[\Lambda]$ and $y\in\uHH[p-1]{\Lambda}[\Lambda]$, if $[u\cdot\imath, (x+y\cdot\EulerClass{2})\cdot\imath^q]=0$ then
  $$(x+y\cdot\EulerClass{2})\cdot\imath^q =[u\cdot\imath,u^{-1} \cdot\EulerClass{2} \cdot x \cdot\imath^{q-1}].$$
\end{corollary}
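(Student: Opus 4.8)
The plan is to mimic the structure of the proof of \Cref{cor:two_equations_determine}: work inside the bigraded algebra $\uHH{\Lambda}[\Lambda][\imath^{\pm1},\EulerClass{2}]$ of \Cref{prop:HH_isos}\eqref{it:bracket}, expand the hypothesis $[u\cdot\imath,(x+y\cdot\EulerClass{2})\cdot\imath^q]=0$ using the Gerstenhaber relations and the bracket formulas $[\imath,-]=0$, $[\EulerClass{2},\uHH{\Lambda}[\Lambda]]=0$, $[\EulerClass{2},\imath]=-\imath$, and then solve for $y$ in terms of $x$ and $u$. First I would record that $[u\cdot\imath,\imath]=0$ and $[u\cdot\imath,\EulerClass{2}]=[\EulerClass{2},u\cdot\imath]^{\mathrm{op}}$-type terms reduce, via $[\EulerClass{2},u]=0$ and $[\EulerClass{2},\imath]=-\imath$ together with the Leibniz rule $[z,a\cdot b]=[z,a]\cdot b\pm a\cdot[z,b]$, to $[u\cdot\imath,\EulerClass{2}]=u\cdot[\imath,\EulerClass{2}]=u\cdot\imath$ (up to the sign dictated by the total degrees, which I will fix carefully). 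Hence expanding $[u\cdot\imath,(x+y\cdot\EulerClass{2})\cdot\imath^q]$ by the Leibniz rule gives a sum of a term proportional to $\imath^{q+1}$ (the ``$\uHH{\Lambda}[\Lambda]$-part'') and a term proportional to $\imath^{q+1}\cdot\EulerClass{2}$ (the ``$\EulerClass{2}$-part''), exactly as in \Cref{cor:two_equations_determine}.

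Next I would read off the two resulting equations. The $\EulerClass{2}$-part will say $[u,x]\cdot\imath^{q+1}\cdot\EulerClass{2}=0$, i.e.\ $[u,x]=0$ in $\uHH{\Lambda}[\Lambda]$ (this is the analogue of the vanishing of the second summand in the earlier corollary, and it uses $[u,u]=0$ to kill a $u$-dependent contribution). The $\uHH{\Lambda}[\Lambda]$-part, after collecting the coefficient of $\imath^{q+1}$ and using $[u,x]=0$ from the first equation, will read $(2u\cdot y\pm[\text{something in }u,x])\cdot\imath^{q+1}=0$; since $[u,u]=0$ by hypothesis the only surviving term is $\pm(u\cdot y - [\text{term}])$, and solving gives $y=u^{-1}\cdot x$ up to sign (one must check the sign works out so that $y\cdot\EulerClass{2}\cdot\imath^q$ coincides with the $\EulerClass{2}$-component of $[u\cdot\imath,u^{-1}\cdot\EulerClass{2}\cdot x\cdot\imath^{q-1}]$). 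Finally I would verify directly that $[u\cdot\imath,\,u^{-1}\cdot\EulerClass{2}\cdot x\cdot\imath^{q-1}]$ equals $(x+y\cdot\EulerClass{2})\cdot\imath^q$ with this $y$: expanding the bracket by Leibniz, the $\EulerClass{2}$-free term comes from $[u\cdot\imath,\EulerClass{2}]=u\cdot\imath$ hitting the $\EulerClass{2}$, producing $u\cdot u^{-1}\cdot x\cdot\imath^q=x\cdot\imath^q$, while the $\EulerClass{2}$-term comes from $[u,u^{-1}\cdot x]\cdot\EulerClass{2}\cdot\imath^q$, which using $[u,x]=0$ and $[u,u^{-1}]=-u^{-2}[u,u]=0$ collapses — wait, that would give $0$, so the $y\cdot\EulerClass{2}$ term must instead arise from $[u\cdot\imath,u^{-1}]$ interacting with $x\cdot\EulerClass{2}\cdot\imath^{q-1}$; this bookkeeping of which Leibniz summand produces which piece is the step I expect to be the main obstacle, and it is exactly parallel to (and slightly more intricate than) the computation in \Cref{cor:two_equations_determine}.

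Concretely, the key steps in order are: (i) rewrite everything in the model $\uHH{\Lambda}[\Lambda][\imath^{\pm1},\EulerClass{2}]$ and fix sign conventions from the total-degree Koszul rule; (ii) compute the auxiliary brackets $[u\cdot\imath,\imath]=0$ and $[u\cdot\imath,\EulerClass{2}]=u\cdot\imath$; (iii) expand the hypothesis via the Gerstenhaber Leibniz rule into its $\uHH{\Lambda}[\Lambda]$-component and its $\EulerClass{2}$-component; (iv) extract from the $\EulerClass{2}$-component that $[u,x]=0$, using $[u,u]=0$; (v) extract from the $\uHH{\Lambda}[\Lambda]$-component, again using $[u,u]=0$ and now $[u,x]=0$, that $y$ is forced to equal $u^{-1}\cdot x$ up to the relevant sign; (vi) substitute back and verify the claimed identity $(x+y\cdot\EulerClass{2})\cdot\imath^q=[u\cdot\imath,u^{-1}\cdot\EulerClass{2}\cdot x\cdot\imath^{q-1}]$ by a direct Leibniz expansion. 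The hypotheses $p\geq2$ and $u$ of pure Hochschild degree $4$ ensure all brace/bracket operations involved are defined and that $u^{-1}$ makes sense in $\uTateHH{\Lambda}[\Lambda]$ (the inverse lives in the Tate completion, as in \Cref{cor:two_equations_determine}); I would flag that, just as there, the division by $u$ takes place in the part of $\uHH{\Lambda}[\Lambda]$ agreeing with $\uTateHH{\Lambda}[\Lambda]$. The overall length should be comparable to that of \Cref{cor:two_equations_determine}, since the statement is, as the paper says, ``similar''.
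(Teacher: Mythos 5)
Your overall strategy is the intended one: the paper gives no details beyond saying that the proof is ``similar to that of \Cref{cor:two_equations_determine}'', and passing to the model $\uHH{\Lambda}[\Lambda][\imath^{\pm1},\EulerClass{2}]$ of \Cref{prop:HH_isos}, expanding the hypothesis by the Leibniz rule with $[\imath,-]=0$, $[\EulerClass{2},\uHH{\Lambda}[\Lambda]]=0$, $[\EulerClass{2},\imath]=-\imath$, separating components, solving for $y$, and verifying the displayed identity by a second Leibniz expansion is exactly what is meant. Your auxiliary brackets $[u\cdot\imath,\imath]=0$ and $[u\cdot\imath,\EulerClass{2}]=u\cdot\imath$ are correct, as is the role of $[u,u]=0$ in forcing $[u,u^{-1}]=0$.

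However, the two component equations you extract are wrong, and with them the argument does not close. The only source of an $\EulerClass{2}$-component in $[u\cdot\imath,(x+y\cdot\EulerClass{2})\cdot\imath^q]$ is $[u\cdot\imath,y]\cdot\EulerClass{2}\cdot\imath^{q}=[u,y]\cdot\EulerClass{2}\cdot\imath^{q+1}$, so the $\EulerClass{2}$-part of the hypothesis says $[u,y]=0$, not $[u,x]=0$ (and $[u,x]=0$ is not a consequence of the hypothesis in general). The $\EulerClass{2}$-free part is $\left([u,x]\pm u\cdot y\right)\cdot\imath^{q+1}$, the second summand coming from $y\cdot[u\cdot\imath,\EulerClass{2}]\cdot\imath^{q}$; there is no factor $2$ and no use of $[u,u]=0$ here, and it forces $y=\pm u^{-1}\cdot[u,x]$ --- a Lie-bracket expression, not $y=u^{-1}\cdot x$. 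This is precisely what makes the final verification work: using $[u\cdot\imath,u^{-1}]=[u,u^{-1}]\cdot\imath=0$, the $\EulerClass{2}$-free piece of $[u\cdot\imath,\,u^{-1}\cdot\EulerClass{2}\cdot x\cdot\imath^{q-1}]$ is $u^{-1}\cdot(u\cdot\imath)\cdot x\cdot\imath^{q-1}=x\cdot\imath^{q}$, while its $\EulerClass{2}$-piece comes from the Leibniz summand $\pm u^{-1}\cdot\EulerClass{2}\cdot[u\cdot\imath,x\cdot\imath^{q-1}]=\pm u^{-1}\cdot\EulerClass{2}\cdot[u,x]\cdot\imath^{q}$, which equals $y\cdot\EulerClass{2}\cdot\imath^{q}$ exactly by the corrected first equation. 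The ``wait, that would give $0$'' impasse you run into is a symptom of the mistaken equations $[u,x]=0$ and $y=u^{-1}\cdot x$: under those, the right-hand side of the desired identity has vanishing $\EulerClass{2}$-component while the left-hand side does not. Correct the two component equations and the rest of your outline goes through.
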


We obtain the following more precise information on a minimal $A_\infty$-model
of the $2\ZZ$-derived contraction algebra.

\begin{proposition}\label{prop:periodicity}
  The $2\ZZ$-derived contraction algebra has a minimal $A_\infty$-model
  \[
    (\Lambda[\imath^{\pm1}],m_4,m_6,\cdots)
  \]
  such that $m_n$ is $\CC[\imath^{\pm1}]$-linear for all $n\geq 4$. In
  particular, $\{m_4\}=u\cdot\imath$ for some unit
  $u\in\uTateHH[4]{\Lambda}[\Lambda]$ satisfying $[u,u]=0$.
\end{proposition}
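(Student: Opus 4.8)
The plan is to first identify the class $\{m_4\}$ of an arbitrary minimal model, then to observe that the whole statement rests on the single vanishing $[u,u]=0$, and finally to use that vanishing to upgrade the model one operation at a time. Starting from any minimal model $B=(\Lambda[\imath^{\pm1}],m_4,m_6,\dots)$ supplied by Kadeishvili's theorem, \Cref{cor:JM-rUMP_unit} together with \Cref{prop:HH_isos}, used as an identification, gives $j^*\{m_4\}=u\cdot\imath$ for a unit $u\in\uTateHH[4]{\Lambda}[\Lambda]$ (which in this positive Hochschild degree agrees with $\uHH[4]{\Lambda}[\Lambda]$). On the other hand, the component of the Maurer--Cartan equation~\eqref{eq:MC} in which $\partial(m_6)$ first occurs reads $\partial(m_6)+m_4\{m_4\}=0$, so $\tfrac12[\{m_4\},\{m_4\}]=0$ in Hochschild cohomology. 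By \Cref{cor:two_equations_determine} these two facts determine $\{m_4\}$ completely: under the splitting $\HH[4][-2]{\Lambda[\imath^{\pm1}]}[\Lambda[\imath^{\pm1}]]=\uHH[4]{\Lambda}[\Lambda]\cdot\imath\oplus\uHH[3]{\Lambda}[\Lambda]\cdot\EulerClass{2}\cdot\imath$ of \Cref{prop:HH_isos}\eqref{it:bracket} it equals $\bigl(u+\tfrac12 u^{-1}[u,u]\cdot\EulerClass{2}\bigr)\cdot\imath$.

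By \Cref{prop:HH_isos}\eqref{it:HH_iso}, the $\CC[\imath^{\pm1}]$-linear Hochschild cocycles of bidegree $(4,-2)$ represent exactly the $\EulerClass{2}$-free classes. Since $m_4$ is automatically a cocycle (because $m_3=0$) and $\{m_4\}$ is an invariant of $\DerCon$, a minimal model with $\CC[\imath^{\pm1}]$-linear $m_4$ can exist only if $[u,u]=0$, and conversely, once $[u,u]=0$ one may replace $m_4$ by a $\CC[\imath^{\pm1}]$-linear cocycle in its class via an $A_\infty$-isomorphism with identity linear part. Thus the whole proposition turns on the vanishing $[u,u]=0$.

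This vanishing is the main obstacle. It expresses the self-compatibility of the $4$-fold periodicity $\Omega_{\Lambda^e}^4(\Lambda)\cong\Lambda$ of the symmetric algebra $\Lambda$ furnished by \Cref{thm:JM-rUMP_unit}, and it is where the geometric input becomes essential. One clean way to see it is to lift the $2$-periodicity of the triangulated category $\SingCat{R}$ to the DG level: the canonical DG enhancement $\dgSingCat{R}$, and hence $\DerCon=\REnd{N}$, can be taken to be a $\CC[\imath^{\pm1}]$-linear DG algebra, so that Kadeishvili's theorem applied $\CC[\imath^{\pm1}]$-linearly already yields a model with all $m_n$ being $\CC[\imath^{\pm1}]$-linear, forcing $[u,u]=0$ by the previous paragraph. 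Alternatively, $[u,u]=0$ is obtained by a purely homological computation from the description of $\{m_4\}$ above together with \Cref{thm:JM-rUMP_unit}; this is the technical heart of the matter and is carried out in~\cite{JKM22}.

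Granting $[u,u]=0$, so that $\{m_4\}=u\cdot\imath$ and $m_4$ may be taken $\CC[\imath^{\pm1}]$-linear, the construction is finished by a standard induction. Suppose $m_4,\dots,m_{2k}$ have been made $\CC[\imath^{\pm1}]$-linear. \Cref{lem:contractible} shows that every class in $\HH{\Lambda[\imath^{\pm1}]}[\Lambda[\imath^{\pm1}]]$ of Hochschild degree $\geq2$ annihilated by $[u\cdot\imath,-]$ is a boundary $[u\cdot\imath,w]$ with $w$ an explicit $\EulerClass{2}$-multiple; since in the inductive extension of $A_\infty$-structures along $m_4$ the obstruction to choosing $m_{2k+2}$ $\CC[\imath^{\pm1}]$-linear is precisely such a class, it is killed by the $A_\infty$-isomorphism with identity linear part determined by $w$, and we may take $m_{2k+2}$ $\CC[\imath^{\pm1}]$-linear. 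Iterating produces the required model $(\Lambda[\imath^{\pm1}],m_4,m_6,\dots)$ with every $m_n$ being $\CC[\imath^{\pm1}]$-linear. For such a model, writing $m_4=m_4^{(0)}\cdot\imath$ with $m_4^{(0)}\in\uHC[4]{\Lambda}[\Lambda]$, the $(7,-4)$-component of~\eqref{eq:MC} read inside $\uHC{\Lambda}[\Lambda][\imath^{\pm1}]$ shows that $\tfrac12[m_4^{(0)},m_4^{(0)}]$ is a coboundary, recovering $\{m_4\}=u\cdot\imath$ with $[u,u]=0$ and giving the ``in particular'' clause.
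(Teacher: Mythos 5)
Your argument is correct and, at its one decisive point, coincides with the paper's: the paper's entire proof of the first assertion is the citation of \cite{HK18}, which supplies precisely what you call the ``clean way''---the $2$-periodicity of $\SingCat{R}$ lifts to the cochain level, so $\DerCon$ is quasi-isomorphic to a $\CC[\imath^{\pm1}]$-linear DG algebra and homotopy transfer over the graded field $\CC[\imath^{\pm1}]$ immediately yields a minimal model with every $m_n$ being $\CC[\imath^{\pm1}]$-linear; the ``in particular'' clause is then read off from \Cref{prop:HH_isos} and the Maurer--Cartan equation exactly as you do. Where you genuinely diverge is in the architecture: you invert the logic, taking $[u,u]=0$ as the pivot and reconstructing the $\CC[\imath^{\pm1}]$-linear model from it by an operation-by-operation induction via \Cref{lem:contractible}. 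That induction is redundant once the DG-level input from \cite{HK18} is granted (the transferred model is already fully $\CC[\imath^{\pm1}]$-linear), and it is also the only place where your argument is merely sketched: identifying the obstruction to a $\CC[\imath^{\pm1}]$-linear choice of $m_{2k+2}$ with a $[u\cdot\imath,-]$-closed class requires setting up the obstruction calculus of \cite{Mur20b} in the $\EulerClass{2}$-graded complex, which is essentially the content of the proof of \Cref{thm:rUMP_ThmB} rather than something available for free at this stage. What your route buys in exchange is a sharper analysis of an \emph{arbitrary} minimal model: via \Cref{cor:two_equations_determine} you pin down $\class{m_4}=(u+\tfrac{1}{2}u^{-1}[u,u]\cdot\EulerClass{2})\cdot\imath$ and thereby make explicit that the sole non-formal content of the proposition is the vanishing of the $\EulerClass{2}$-component of $\class{m_4}$, equivalently $[u,u]=0$---a point the paper's two-line proof leaves implicit. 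You are also right that this vanishing cannot be extracted from the triangulated-level periodicity alone and must come from the cochain-level structure in \cite{HK18} (or from the analysis in \cite{JKM22}).
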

\begin{proof}
  The first part follows from \cite{HK18}. The rest is a direct consecuence of \Cref{prop:HH_isos} and the fact that $[\{m_4\},\{m_4\}]=0$, which follows from \eqref{eq:MC}.
\end{proof}

\subsection{Proof of \Cref{thm:JM-con}}
\label{sec:the_proof}

The introduction of the restricted universal Massey product of $\DerCon$ is
justified by the following result and the subsequent corollary.
\Cref{thm:rUMP_ThmB} is an immediate consequence of~\cite[Thm.~B]{JKM22}, and
the latter theorem is obtained as an application of the obstruction theory for the
existence of $A_\infty$-structures developed by the third-named author
in~\cite{Mur20b}. In this note we give a direct proof of \Cref{thm:rUMP_ThmB}
that leverages our detailed knowledge of the relationship between the Hochschild
cohomology of the contraction algebra and that of its graded variant
(see~\Cref{subsec:HH_Con}), although part of the techniques used to
prove~\cite[Thm.~B]{JKM22} are utilised in some guise.

\begin{theorem}
  \label{thm:rUMP_ThmB}
  Let $A$ be a DG algebra such that $\dgH{A}=\Lambda[\imath^{\pm1}]$ as graded
  algebras. If
  \[
    j^*\!\class{m_4^A}=j^*\!\class{m_4^{\DerCon}}\in\TateHH{\Lambda}[\Lambda[\imath^{\pm1}]],
  \]
  then $A$ is quasi-isomorphic to the $2\ZZ$-derived contraction algebra $\DerCon$ via
  a quasi-iso\-mor\-phism that induces the identity in cohomology.
\end{theorem}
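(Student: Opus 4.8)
The plan is to compare minimal $A_\infty$-models of $A$ and $\DerCon$ and show that, after an $A_\infty$-isomorphism, their structure maps agree, yielding a quasi-isomorphism of DG algebras (via~\Cref{rmk:DG_vs_A-infinity}). First I would invoke~\Cref{prop:periodicity} to fix a $\CC[\imath^{\pm1}]$-linear minimal model $B=(\Lambda[\imath^{\pm1}],m_4,m_6,\dots)$ of $\DerCon$ with $\class{m_4}=u\cdot\imath$, $u\in\uTateHH[4]{\Lambda}[\Lambda]$ a unit with $[u,u]=0$; likewise pick any minimal model $B'=(\Lambda[\imath^{\pm1}],m_4',m_6',\dots)$ of $A$. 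Since $j^*\class{m_4'}=j^*\class{m_4}$ in $\TateHH{\Lambda}[\Lambda[\imath^{\pm1}]]$ and this class is $u\cdot\imath$ (a unit of positive Hochschild degree, by~\Cref{cor:JM-rUMP_unit} and~\Cref{prop:HH_isos}), I would use~\Cref{cor:two_equations_determine}: the unit $u$ together with the Maurer--Cartan constraint $\tfrac12[\class{m_4'},\class{m_4'}]=0$ forces $\class{m_4'}=u\cdot\imath=\class{m_4}$ in $\HH[4][-2]{\Lambda[\imath^{\pm1}]}[\Lambda[\imath^{\pm1}]]$. Thus after replacing $m_4'$ by a cohomologous cocycle (adjusting $B'$ by an $A_\infty$-isomorphism with identity linear part, which only changes $m_4$ by a coboundary), we may assume $m_4'=m_4$ on the nose.

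Next comes the inductive step: having arranged $m_k'=m_k$ for $4\leq k<2\ell$ (with odd-indexed maps zero, as they must be by the degree argument in the Setting after~\Cref{thm:HK}), I would show one can further modify $B'$ by an $A_\infty$-isomorphism with identity linear part so that $m_{2\ell}'=m_{2\ell}$ as well. The obstruction is the class of $m_{2\ell}'-m_{2\ell}$ in $\HH[2\ell][2-2\ell]{\Lambda[\imath^{\pm1}]}[\Lambda[\imath^{\pm1}]]$: the Maurer--Cartan equations~\eqref{eq:MC} for $B$ and $B'$, together with $m_k=m_k'$ for $k<2\ell$, imply $\partial(m_{2\ell}'-m_{2\ell})=0$, so this difference is a cocycle; and by the defining equation~\eqref{A-map} of $A_\infty$-isomorphisms with identity linear part, modifying by the cochain $f=(0,0,0,\dots,0,f_{2\ell-2},\dots)$ changes $m_{2\ell}'$ precisely by $\partial(f_{2\ell-2})$ modulo terms involving lower $m_k'$'s and lower $f_j$'s, all of which have already been matched. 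To conclude that the obstruction vanishes I would apply~\Cref{lem:contractible}: writing the cocycle $m_{2\ell}'-m_{2\ell}$ as $(x+y\cdot\EulerClass{2})\cdot\imath^{\ell-1}$ with $p=2\ell\geq 4\geq 2$, the bracket $[u\cdot\imath,\,m_{2\ell}'-m_{2\ell}]$ vanishes—this is where the Maurer--Cartan equation is used, since $[m_4,-]$ controls the differential on the relevant page and $m_4=u\cdot\imath$—whence $m_{2\ell}'-m_{2\ell}$ is a boundary of the exact form prescribed by~\Cref{lem:contractible}, which is exactly the kind of term realisable by an $A_\infty$-isomorphism with identity linear part.

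Carrying the induction through all even $\ell$ produces an $A_\infty$-isomorphism $B'\xrightarrow{\sim} B$ with identity linear part; equivalently, the two minimal models are $A_\infty$-isomorphic via a map inducing the identity on cohomology. By~\Cref{rmk:DG_vs_A-infinity} (the equivalence of homotopy theories of non-unital DG and $A_\infty$-algebras, plus~\cite[Prop.~6.2]{Mur14} to pass between unital and non-unital), it follows that $A$ and $\DerCon$ are quasi-isomorphic as unital DG algebras, via a quasi-isomorphism inducing the identity in cohomology. The main obstacle I anticipate is the inductive step: one must check carefully that the Maurer--Cartan and $A_\infty$-morphism equations at stage $2\ell$ decouple into exactly "the difference is a $\partial$-cocycle" plus "$[u\cdot\imath,-]$ annihilates it", so that~\Cref{lem:contractible} applies verbatim; the bookkeeping of which quadratic brace terms $m'\{f,\dots,f\}$ and $f\{m\}$ have already been neutralised by the earlier stages is delicate, and the $\CC[\imath^{\pm1}]$-linearity secured by~\Cref{prop:periodicity} is what keeps the relevant cochains within the computable range $\uHH{\Lambda}[\Lambda][\imath^{\pm1},\EulerClass{2}]$ of~\Cref{prop:HH_isos}.
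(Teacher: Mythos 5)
Your proposal is correct in substance and follows essentially the same strategy as the paper: compare minimal models, settle the quaternary operation via \Cref{cor:two_equations_determine}, and then run an induction on Hochschild degree in which the obstruction is killed by \Cref{lem:contractible}, i.e.\ by a correction supported in Hochschild degrees $2\ell-1$ \emph{and} $2\ell-3$. The one genuine organisational difference is where the work is done. You update the $A_\infty$-structure stage by stage, so that the obstruction at stage $\ell$ is literally the cocycle $m_{2\ell}'-m_{2\ell}$; its $\partial$-closedness and the vanishing of $[\class{m_4},\class{m_{2\ell}'-m_{2\ell}}]$ then drop out of subtracting the two Maurer--Cartan equations \eqref{eq:MC} in Hochschild degrees $2\ell+1$ and $2\ell+3$ (in the latter only the pairs $(4,2\ell)$ and $(2\ell,4)$ survive the cancellation, giving $\partial(m'_{2\ell+2}-m_{2\ell+2})+[m_4,m'_{2\ell}-m_{2\ell}]=0$). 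The paper instead builds a single $A_\infty$-morphism, so its obstruction is the degree-$(2n+2)$ part of the full morphism equation \eqref{A-map}, and the $[m_4,-]$-closedness of that cocycle is the content of the lengthy bookkeeping identity \eqref{claim}; your route trades that computation for two checks you only gesture at: (i) the stage-$\ell$ correction must not disturb the already-matched $m_k'$ for $k<2\ell$, which is exactly why the degree-$(2\ell-3)$ component must be a $\partial$-cocycle --- \Cref{lem:contractible} does provide this, since its output $u^{-1}\cdot\EulerClass{2}\cdot x\cdot\imath^{q-1}$ is a cohomology class (compare the paper's condition $\partial(c_{2n-1})=0$); and (ii) at the first non-trivial stage $2\ell=6$ the quadratic terms $c_3\cdot c_3$ and $f_3\{c_3\}$ contribute in Hochschild degree $6$ and must be absorbed into the degree-$5$ component, which is precisely the special case $n=2$ in the paper's formula for $f^{(n)}_{2n+1}$. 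Finally, a small index slip: the component you modify to realise a coboundary in Hochschild degree $2\ell$ is $f_{2\ell-1}$, not $f_{2\ell-2}$; the even-indexed components vanish for degree reasons, as you yourself note.
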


\begin{proof}
  Let
  \[(\Lambda[\imath^{\pm1}],m_4,m_6,\dots)\]
  be a minimal model for the $2\ZZ$-derived contraction algebra as in \Cref{prop:periodicity}, and
  \[(\Lambda[\imath^{\pm1}],m_4',m_6',\dots)\]
  a minimal model for $A$. Inductively, we will construct
  an $A_\infty$-isomorphism with identity linear part
  \[f=(0,0,0,f_3,0,f_5,\dots)\colon
    (\Lambda[\imath^{\pm1}],m_4,m_6,\dots)\longrightarrow
    (\Lambda[\imath^{\pm1}],m_4',m_6',\dots),\] and this clearly suffices to
  prove the claim. Notice that, necessarily, $f_{2n}=0$ for all $n\geq 0$ since
  $\Lambda[\imath^{\pm1}]$ is concentrated in even degrees.

  We proceed as follows. For all $n\geq 0$ we define a Hochschild cochain of
  total degree $1$
  \[
    f^{(n)}=(0,0,0,f^{(n)}_3,0,\dots, f^{(n)}_{2n+1},0,\dots)
  \]
  such that $f^{(n)}$ coincides with $f^{(n-1)}$ up to Hochschild degree $2n-2$
  and \addtocounter{equation}{1}
  \begin{equation}\tag*{(\theequation)$^{\alto}_{\bajo}$}\label{oso}
    \partial(f^{(n)}) +f^{(n)}\cdot f^{(n)}+\sum_{r\geq 0}m'\{f^{(n)},\stackrel{r}{\dots},f^{(n)}\}-m-f^{(n)}\{m\}
  \end{equation}
  vanishes up to Hochschild degree $2n+2$. If we achieve this goal, then we can
  take
  \[f=(0,0,0,f_3^{(3)},0,\dots, f_{2n-3}^{(n)},0,\dots).\]
  Indeed, $f$ coincides with $f^{(n)}$ up to Hochschild degree $2n-2$, so \eqref{A-map}
  coincides with \ref{oso} up to Hochschild degree $2n-1$. In particular \eqref{A-map} vanishes up to Hochschild degree $2n-1$ for all $n\geq 0$. Therefore \eqref{A-map} fully vanishes, so $f$ is indeed an $A_\infty$-isomorphism with identity linear part.

  We start with $f^{(0)}=0$. With this choice, {\renewcommand\alto{0}\ref{oso}} clearly vanishes up to Hochschild degree $2$.

  Below, when defining $f^{(n)}$ we will only specify $f^{(n)}_{2n-1}$ and $f^{(n)}_{2n+1}$ since in smaller Hochschild degrees they are determined by $f^{(n-1)}$ and in higher Hochschild degrees they are irrelevant. Moreover, we will also use that {\renewcommand\alto{n-1}\ref{oso}} and {\renewcommand\alto{n}\ref{oso}} agree (and hence both vanish) up to Hochschild degree $2n-1$.

  Since $j^*\{m_4\}=j^*\{m_4'\}$, then $\{m_4\}=\{m_4'\}$ by \Cref{cor:two_equations_determine}, so there exists $f_3^{(1)}$ such that
  \begin{equation}\label{A-map-4}
    \partial(f_3^{(1)})+m_4'-m_4=0.
  \end{equation}
  This proves that {\renewcommand\alto{1}\ref{oso}} vanishes up to Hochschild degree $4$.

  Assume we have constructed up to $f^{(n-1)}$ for some $n\geq 2$. Let us see how to construct $f^{(n)}$.
  We know by \cite[Lemme B.4.2]{Lef03} that the Hochschild degree $2n+2$ part of {\renewcommand\alto{n-1}\ref{oso}}, that we simply denote by $a$, is an obstruction cocycle ($\partial(a)=0$) which vanishes in cohomology if and only if there exists $f_{2n+1}^{(n)}$ such that, taking $f_{2n-1}^{(n)}=f_{2n-1}^{(n-1)}$, {\renewcommand\alto{n}\ref{oso}} vanishes up to Hochschild degree $2n+2$. We claim that
  \begin{equation}\label{claim}
    [m_4,a]+\partial(b-f_3^{(n-1)}\{a\})
  \end{equation}
  vanishes, where $b$ is the Hochschild degree $2n+4$ part of {\renewcommand\alto{n-1}\ref{oso}}. We prove this claim below. Now, we deduce from \Cref{prop:periodicity} and \Cref{lem:contractible} that there exist Hochschild cochains $c_{2n-1}$ and $c_{2n+1}$ such that
  \begin{align*}
    a+\partial(c_{2n+1})+[m_4,c_{2n-1}] & =0, & \partial(c_{2n-1}) & =0.
  \end{align*}
  If we set
  \begin{align*}
    f^{(n)}_{2n-1} & = f^{(n-1)}_{2n-1}+c_{2n-1}, &
    f^{(n)}_{2n+1} & =
    \left\{\begin{array}{ll}
             c_5+ f^{(1)}_3\{c_3\}+\frac{1}{2}c_3\{c_3\}, & n=2; \\[2mm]
             c_{2n+1}+f_3^{(n-1)}\{c_{2n-1}\},                    & n>2;
           \end{array}\right.
  \end{align*}
  we complete the induction step since the Hochschild degree $2n$ part of {\renewcommand\alto{n}\ref{oso}} is,
  \[\partial(c_{2n-1})=0,\]
  and its Hochschild degree $2n+2$ part is, for $n=2$,
  \begin{multline*}
    a+\partial\Big(c_5+ f^{(1)}_3\{c_3\}+\frac{1}{2}c_3\{c_3\}\Big)+ f_3^{(1)}\cdot c_3 + c_3\cdot f_3^{(1)}+c_{3}\cdot c_{3} +m_4'\{c_{3}\}-c_{3}\{m_4\}\\=
    a+\partial(c_{5})+\partial\big(f^{(1)}_3\big)\{c_3\}+ f^{(1)}_3\{\partial(c_3)\} +m_4'\{c_{3}\}-c_{3}\{m_4\}\\
    = a+\partial(c_{5})+(m_4-m_4')\{c_3\}+m_4'\{c_{3}\}-c_{3}\{m_4\}\\
    = a+\partial(c_{5})+[m_4,c_3]=0,
  \end{multline*}
  where we use that $\partial(c_3\{c_3\})=-2c_3\cdot c_3$ by \Cref{braces_and_differential}, and for $n>2$,
  \begin{multline*}
    a+\partial\Big(c_{2n+1}+f_3^{(n-1)}\{c_{2n-1}\}\Big)+ f_3^{(n-1)}\cdot c_{2n-1} + c_{2n-1}\cdot f_3^{(n-1)}+m_4'\{c_{2n-1}\}-c_{2n-1}\{m_4\}\\=
    a+\partial(c_{2n+1}) +\partial\big(f^{(n-1)}_3\big)\{c_{2n-1}\}+ f^{(n-1)}_3\{\partial(c_{2n-1})\} +m_4'\{c_{2n-1}\}-c_{2n-1}\{m_4\}\\
    =
    a+\partial(c_{2n+1}) +(m_{4}-m_{4}')\{c_{2n-1}\}+m_4'\{c_{2n-1}\}-c_{2n-1}\{m_4\}\\
    =
    a+\partial(c_{2n+1}) +[m_4,c_{2n-1}]=0.
  \end{multline*}

  We finish the proof with the vanishing of \eqref{claim}. In what follows, let us write $\Xi=\text{{\renewcommand\alto{n-1}\ref{oso}}}$ and $f=f^{(n-1)}$, so as not to overload notation. Note that \eqref{claim} is the Hochschild degree $2n+5$ part of
  \begin{equation}\label{claim2}
    [m,\Xi]+\partial(\Xi-f\{\Xi\}).
  \end{equation}
  This cochain vanishes in Hochschild degrees $<2n+5$.

  We now start a series of computations. We number most terms for bookkeeping purposes. In the first equation we use \Cref{braces_and_differential},
  \begin{align*}
    \partial(\Xi)= & \ \overbracket{\partial(f)\cdot f}^{\mytag{d(f)f}}-\overbracket{f\cdot\partial{f}}^{\mytag{fd(f)}}
    +\overbracket{\sum_{r\geq0}\partial(m')\{f,\stackrel{r}{\dots},f\}}^{\mytag{d(m')(f..f)}}                                                           \\
                   & -\overbracket{\sum_{r\geq1}\sum_{i=1}^rm'\{f,\stackrel{i-1}{\dots},\partial(f),\stackrel{r-i}{\dots},f\}}^{\mytag{m'(f..d(f)..f)}} \\
                   & -\overbracket{\sum_{r\geq1}f\cdot m'\{f,\stackrel{r-1}{\dots},f\}}^{\mytag{fm'(f..f)}}
    -\overbracket{\sum_{r\geq2}\sum_{i=1}^{r-1}m'\{f,\stackrel{i-1}{\dots},f^2,\stackrel{r-i-1}{\dots},f\}}^{\mytag{m'(f..f2..f)}}
    \\
                   & + \overbracket{\sum_{r\geq1}m'\{f,\stackrel{r-1}{\dots},f\}\cdot f}^{\mytag{m'(f..f)f}}-\overbracket{\partial(m)}^{\mytag{d(m)}}
    -\overbracket{\partial(f)\{m\}}^{\mytag{d(f)(m)}}
    -\overbracket{f\{\partial(m)\}}^{\mytag{f(d(m))}}
    +\overbracket{f\cdot m}^{\mytag{fm}}
    -\overbracket{m\cdot f}^{\mytag{mf}}
  \end{align*}
  Since $m$ and $m'$ are $A_\infty$-algebra structures,
  \begin{align*}
    \text{\refcirc{d(m)}}        & =-m\{m\},                                                                                                                     \\
    \text{\refcirc{f(d(m))}}     & =-f\{m\{m\}\},                                                                                                                \\
    \text{\refcirc{d(m')(f..f)}} & =-\sum_{r\geq 0}m'\{m'\}\{f,\stackrel{r}{\dots},f\}                                                                           \\
                                 & =-\sum_{r\geq0}\sum_{0\leq i\leq j\leq r}m'\{f,\stackrel{i}{\dots},m'\{f,\stackrel{j-i}{\dots},f\},\stackrel{r-j}{\dots},f\}, \\
                                 & =-\overbracket{\sum_{r\geq0}m'\{m'\{f,\stackrel{r}{\dots},f\}\}}^{\mytag{m'(m'(f..f))}}
    -\overbracket{\sum_{r\geq1}\sum_{\substack{0\leq i\leq j\leq r                                                                                               \\j-i<r}}m'\{f,\stackrel{i}{\dots},m'\{f ,\stackrel{j-i}{\dots},f\} ,\stackrel{r-j}{\dots}, f\}}^{\mytag{m'(f..m'(f..f)..f)_one_f_out}}
  \end{align*}
  Here we also use the brace relation.
  We also split the following summations in two parts,
  \begin{align*}
    \text{\refcirc{m'(f..d(f)..f)}} & =\overbracket{m'\{\partial(f)\}}^{\mytag{m'(d(f))}}+ \overbracket{\sum_{r\geq2}\sum_{i=1}^rm'\{f,\stackrel{i-1}{\dots},\partial(f),\stackrel{r-i}{\dots},f\}}^{\mytag{m'(f..d(f)..f)_r>=2}}, \\
    \text{\refcirc{m'(f..f2..f)}}   & =\overbracket{m'\{f^2\}}^{\mytag{m'(f2)}}
    + \overbracket{\sum_{r\geq3}\sum_{i=1}^{r-1}m'\{f,\stackrel{i-1}{\dots},f^2,\stackrel{r-i-1}{\dots},f\}}^{\mytag{m'(f..f2..f)_r>=3}}.
  \end{align*}
  Consider the following cochain, that we decompose using the brace relation,
  \begin{align*}
    \overbracket{\sum_{r\geq0}m'\{f,\stackrel{r}{\dots},f\}\{m\}}^{\mytag{m'(f..f)(m)}} & =
    \sum_{r\geq0}\sum_{i=0}^r m'\{f,\stackrel{i}{\dots},m ,\stackrel{r-i}{\dots}, f\}                                                                                                                                                   \\
                                                                                        & \phantom{=}+	\sum_{r\geq1}\sum_{i=0}^r m'\{f,\stackrel{i-1}{\dots},f\{m\} ,\stackrel{r-i}{\dots}, f\}                                         \\
                                                                                        & =
    \overbracket{m'\{m\}}^{\mytag{m'(m)}}+\overbracket{\sum_{r\geq1}\sum_{i=0}^r m'\{f,\stackrel{i}{\dots},m ,\stackrel{r-i}{\dots}, f\}}^{\mytag{m'(f..m..f)}}
    +\overbracket{m'\{f\{m\}\}}^{\mytag{m'(f(m))}}                                                                                                                                                                                      \\
                                                                                        & \phantom{=}+	\overbracket{\sum_{r\geq2}\sum_{i=0}^r m'\{f,\stackrel{i-1}{\dots},f\{m\} ,\stackrel{r-i}{\dots}, f\}}^{\mytag{m'(f..f(m)..f)}}.
  \end{align*}
  Consider also the following cochain, which is computed by using \Cref{braces_and_cup_product},
  \begin{align*}
    \overbracket{f^2\{m\}}^{\mytag{f2(m)}} & =\overbracket{f\cdot f\{m\}}^{\mytag{ff(m)}}-\overbracket{f\{m\}\cdot f}^{\mytag{f(m)f}}.
  \end{align*}
  Since $\Xi$ vanishes up to Hochschild degree $2n+1$ and its Hochschild degree $2n+2$ part is a cocycle, the following cochains vanish up to Hochschild degree $2n+5$,
  \begin{align*}
    \sum_{r\geq2}\sum_{i=1}^rm'\{f,\stackrel{i-1}{\dots},\Xi,\stackrel{r-i}{\dots},f\}=\text{\refcirc{m'(f..d(f)..f)_r>=2}}
    + \text{\refcirc{m'(f..f2..f)_r>=3}}
    + \text{\refcirc{m'(f..m'(f..f)..f)_one_f_out}}
    -\text{\refcirc{m'(f..m..f)}}
    -\text{\refcirc{m'(f..f(m)..f)}}, \\
    \partial(f)\{\Xi\}+m'\{\Xi\}-m\{\Xi\},\qquad f\{\partial(\Xi)\}.
  \end{align*}
  Notice that
  \begin{align*}
    \Xi\cdot f                  & =\text{\refcirc{d(f)f}}+f^3+ \text{\refcirc{m'(f..f)f}}-\text{\refcirc{mf}}-\text{\refcirc{f(m)f}}, &
    f\cdot\Xi                   & =\text{\refcirc{fd(f)}}+f^3+ \text{\refcirc{fm'(f..f)}}-\text{\refcirc{fm}}-\text{\refcirc{ff(m)}},   \\
    m'\{\Xi\}                   & =\text{\refcirc{m'(d(f))}}+ \text{\refcirc{m'(f2)}}
    + \text{\refcirc{m'(m'(f..f))}}
    -\text{\refcirc{m'(m)}}
    -\text{\refcirc{m'(f(m))}}, &
    \Xi\{m\}                    & = \text{\refcirc{d(f)(m)}}+ \text{\refcirc{f2(m)}}
    + \text{\refcirc{m'(f..f)(m)}}+\text{\refcirc{d(m)}}+\text{\refcirc{f(d(m))}}.
  \end{align*}

  Using all the above, we obtain the following relations, where $\equiv$ stands for congruence modulo cochains vanishing in Hochschild degrees $\leq 2n+5$,
  \begin{align*}
    \eqref{claim2}={} & m\{\Xi\}+\Xi\{m\}+\partial(\Xi)-\partial(f)\{\Xi\}-f\{\partial(\Xi)\}+f\cdot\Xi-\Xi\cdot f                                                                                                                                                                                                                                     \\
    \equiv{}          & m\{\Xi\}+\left(\text{\refcirc{d(f)(m)}}+ \text{\refcirc{f2(m)}}
    + \text{\refcirc{m'(f..f)(m)}}+\text{\refcirc{d(m)}}+\text{\refcirc{f(d(m))}}\right)                                                                                                                                                                                                                                                               \\
                      & +\left(\text{\refcirc{d(f)f}}-\text{\refcirc{fd(f)}}+ \text{\refcirc{d(m')(f..f)}}-\text{\refcirc{m'(f..d(f)..f)}}-\text{\refcirc{fm'(f..f)}}-\text{\refcirc{m'(f..f2..f)}}+ \text{\refcirc{m'(f..f)f}}-\text{\refcirc{d(m)}}-\text{\refcirc{d(f)(m)}}-\text{\refcirc{f(d(m))}}+\text{\refcirc{fm}}-\text{\refcirc{mf}}\right) \\
                      & +\left(\text{\refcirc{m'(d(f))}}+ \text{\refcirc{m'(f2)}}
    + \text{\refcirc{m'(m'(f..f))}}
    -\text{\refcirc{m'(m)}}
    -\text{\refcirc{m'(f(m))}}\right)-m\{\Xi\}-0                                                                                                                                                                                                                                                                                                       \\
                      & +\left(\text{\refcirc{fd(f)}}+f^3+ \text{\refcirc{fm'(f..f)}}-\text{\refcirc{fm}}-\text{\refcirc{ff(m)}}\right)
    -\left(\text{\refcirc{d(f)f}}+f^3+ \text{\refcirc{m'(f..f)f}}-\text{\refcirc{mf}}-\text{\refcirc{f(m)f}}\right)                                                                                                                                                                                                                                    \\
                      & + \left(\text{\refcirc{m'(f..d(f)..f)_r>=2}}
    + \text{\refcirc{m'(f..f2..f)_r>=3}}
    + \text{\refcirc{m'(f..m'(f..f)..f)_one_f_out}}
    -\text{\refcirc{m'(f..m..f)}}
    -\text{\refcirc{m'(f..f(m)..f)}}\right) =0.
  \end{align*}
  This finally concludes the proof.
\end{proof}

\begin{corollary}
  \label{coro:rUMP_determines}
  Let $A$ be a DG algebra such that $\dgH{A}=\Lambda[\imath^{\pm1}]$ as graded
  algebras. If the restricted universal Massey product
  \[
    j^*\!\class{m_4^A}\in\TateHH{\Lambda}[\Lambda[\imath^{\pm1}]]
  \]
  is a unit, then $A$ is quasi-isomorphic to the $2\ZZ$-derived contraction algebra
  $\DerCon$.
\end{corollary}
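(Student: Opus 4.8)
The plan is to deduce this from \Cref{thm:rUMP_ThmB}: that theorem identifies a DG algebra with $\DerCon$ as soon as its restricted universal Massey product equals the \emph{specific} class $j^*\!\class{m_4^{\DerCon}}$, so it suffices to arrange, by a judicious choice of the graded‑algebra identification $\dgH{A}\cong\Lambda[\imath^{\pm1}]$, that $j^*\!\class{m_4^{A}}$ is not merely \emph{a} unit but is exactly this class. Changing the identification by a graded‑algebra automorphism of $\Lambda[\imath^{\pm1}]$ affects neither the hypotheses nor the conclusion of the corollary, so all we need is enough room inside $\operatorname{Aut}(\Lambda[\imath^{\pm1}])$ to move $j^*\!\class{m_4^{A}}$ onto $j^*\!\class{m_4^{\DerCon}}$.

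Concretely, I would first record that by \Cref{cor:JM-rUMP_unit} the class $j^*\!\class{m_4^{\DerCon}}$ is itself a unit of the bigraded algebra $\TateHH{\Lambda}[\Lambda[\imath^{\pm1}]]$, and by hypothesis so is $j^*\!\class{m_4^{A}}$; both lie in bidegree $(4,-2)$. Hence
\[
  \theta\ \coloneqq\ j^*\!\class{m_4^{A}}\cdot\bigl(j^*\!\class{m_4^{\DerCon}}\bigr)^{-1}
\]
is a unit situated in the bidegree‑$(0,0)$ part of $\TateHH{\Lambda}[\Lambda[\imath^{\pm1}]]$. By \Cref{prop:HH_isos}\eqref{it:TateHH_iso} that part is $\TateHH[0][0]{\Lambda}[\Lambda]=\underline{Z}(\Lambda)$, the stable centre of the contraction algebra; thus $\theta\in\underline{Z}(\Lambda)^{\times}$.

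Next I would lift $\theta$ to an \emph{honest} central unit of $\Lambda$. The canonical comparison map $Z(\Lambda)=\HH[0][0]{\Lambda}[\Lambda]\twoheadrightarrow\TateHH[0][0]{\Lambda}[\Lambda]=\underline{Z}(\Lambda)$ is surjective, and here the hypothesis that $\Lambda$ is \emph{basic and connected} is used: connectedness means $Z(\Lambda)$ has no idempotents besides $0$ and $1$, and, being a finite‑dimensional (hence Artinian) commutative $\CC$‑algebra, $Z(\Lambda)$ is therefore local; consequently every preimage of a unit is a unit, so the surjection above restricts to a surjection $Z(\Lambda)^{\times}\twoheadrightarrow\underline{Z}(\Lambda)^{\times}$. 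Pick $v\in Z(\Lambda)^{\times}$ mapping to $\theta$ (or to $\theta^{-1}$, according to the sign conventions below). Since $v$ is central of internal degree $0$, the rules $g|_{\Lambda}=\id$ and $g(\imath)=v\cdot\imath$ define a graded‑algebra automorphism $g$ of $\Lambda[\imath^{\pm1}]$. Composing the identification $\dgH{A}\cong\Lambda[\imath^{\pm1}]$ with $g$ transports a minimal $A_\infty$‑model of $A$ to a new one, and a routine bookkeeping computation — using the identification $\Lambda\cdot\imath\cong\Lambda$ of $\Lambda$‑bimodules recalled in \Cref{subsec:HH}, and the interplay between the action of $g$ on Hochschild cochains and on cohomology classes — shows that $j^*\!\class{m_4^{A}}$ is thereby rescaled by the image of $v^{\pm1}$, and hence, for the appropriate choice above, becomes exactly $j^*\!\class{m_4^{\DerCon}}$. \Cref{thm:rUMP_ThmB} then applies to $A$ equipped with this new identification and produces a quasi‑isomorphism $A\simeq\DerCon$.

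The step I expect to be the crux is the lifting of $\theta$: for an arbitrary finite‑dimensional algebra a unit of the \emph{stable} centre need not lift to a unit of the centre, and it is precisely the connectedness of the (basic) contraction algebra — forcing $Z(\Lambda)$ to be local — that makes the argument go through. The remaining ingredients are bookkeeping: pinning down the exact way the automorphism $g$ acts on the bidegree‑$(4,-2)$ Hochschild class, and observing, via \Cref{rmk:DG_vs_A-infinity}, that transporting a minimal model along a graded‑algebra automorphism leaves the underlying quasi‑isomorphism type of the DG algebra untouched.
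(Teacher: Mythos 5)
Your proposal is correct and follows essentially the same route as the paper: reduce to \Cref{thm:rUMP_ThmB} by twisting a minimal model of $A$ along a graded-algebra automorphism of $\Lambda[\imath^{\pm1}]$ determined by a central unit of $\Lambda$, so that the restricted universal Massey product is rescaled onto $j^*\!\class{m_4^{\DerCon}}$. The only (cosmetic) difference is in how the required unit is produced: the paper phrases this as transitivity of the $Z(\Lambda)^\times$-action on units of bidegree $(4,-2)$ and lifts stable bimodule automorphisms via \cite[Cor.~2.3]{Che21}, whereas you divide the two units inside $\TateHH{\Lambda}[\Lambda[\imath^{\pm1}]]$ and lift the resulting element of the stable centre using that $Z(\Lambda)$ is local because $\Lambda$ is connected --- the same lifting problem, solved by an equally valid elementary argument.
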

\begin{proof}
  Firstly, we observe that the group of graded-algebra automorphisms of
  $\Lambda[\imath^{\pm1}]$ acts on the right of
  $\HH{\Lambda}[\Lambda[\imath^{\pm1}]]$ by conjugation. In particular, the group
  $Z(\Lambda)^\times$ of units of the centre of $\Lambda$ acts on
  $\HH{\Lambda}[\Lambda[\imath^{\pm1}]]$ via the graded-algebra
  automorphisms
  \[
    g_u\colon x\longmapsto x u^{i},\qquad |x|=2i,
  \]
  where $u\in Z(\Lambda)^\times$. The induced action on
  \[
    \TateHH[4][-2]{\Lambda}[\Lambda[\imath^{\pm1}]]\cong\Ext[\Lambda^e]{\Lambda}{\Lambda}[4]
  \]
  has the following explicit description: Given a unit $u\in Z(\Lambda)^\times$
  and an exact sequence of $\Lambda$-bimodules
  \[
    \eta\colon 0\to\Lambda\xrightarrow{f} X_3\to X_2\to X_1\to X_0\to \Lambda\to
    0,
  \]
  we let $[\eta]\cdot u$ be the class of the exact sequence
  \[
    0\to\Lambda\xrightarrow{f'} X_3\to X_2\to X_1\to X_0\to \Lambda\to 0,
  \]
  where $f'\coloneqq u^{-1}f$.
  The above action clearly restricts to the set of units in
  $\TateHH{\Lambda}[\Lambda[\imath^{\pm1}]]$ of bidegree $(4,-2)$, since these are
  precisely the classes that can be represented by an exact sequence with
  projective(-injective) middle terms~\cite[Rmk.~5.8]{Mur22}, and is in fact
  transitive on this set. To prove the latter claim on the transitivity of the
  action we appeal to~\cite[Cor.~2.3]{Che21}, which allows us to lift stable bimodule
  isomorphisms $\Lambda\simeq\Lambda$ to honest bimodule isomorphisms
  $\Lambda\cong\Lambda$, see the proof of~\cite[Prop.~2.2.16]{JKM22}.

  Let $u\in Z(\Lambda)^\times$ be a unit such that
  \[
    j^*\!\class{m_4^A}\cdot
    u=j^*\!\class{m_4^{\DerCon}}\in\HH[4][-2]{\Lambda}[\Lambda[\imath^{\pm1}]].
  \]
  Given a minimal model
  \[
    (\Lambda[\imath^{\pm1}],m_4^A,m_6^A,m_8^A,\cdots)
  \]
  of the DG algebra $A$ we define a new minimal $A_\infty$-algebra structure
  \[
    (\Lambda[\imath^{\pm1}],\underline{m}_4^A,\underline{m}_6^A,\underline{m}_8^A,\cdots)=(\Lambda[\imath^{\pm1}],m_4^A,m_6^A,m_8^A,\cdots)*g_u
  \]
  with $n$-ary opearations $\underline{m}_n^A\coloneqq g_u^{-1}m_4^Ag_u^{\otimes
    n}$ (notice that $\underline{m}_4^A\neq m_4^A$ as soon as $u\neq1$). It is
  easy to see that
  \[
    j^*\!\class{\underline{m}_4^A}=j^*\!\class{m_4^A}\cdot
    u=j^*\!\class{m_4^{\DerCon}}
  \]
  and that there is an isomorphism of $A_\infty$-algebras
  \[
    (\Lambda[\imath^{\pm1}],m_4^A,m_6^A,m_8^A,\cdots)\rightsquigarrow(\Lambda[\imath^{\pm1}],\underline{m}_4^A,\underline{m}_6^A,\underline{m}_8^A,\cdots).
  \]
  Thus, $A$ is quasi-isomorphic to any DG algebra model $B$ of the minimal
  $A_\infty$-algebra on the right-hand side, see \Cref{rmk:DG_vs_A-infinity}, and by \Cref{thm:rUMP_ThmB} the DG
  algebras $B$ and $\DerCon$ are quasi-isomorphic. Consequently, the DG algebras
  $A$ and $\DerCon$ are quasi-isomorphic, which is what we needed to prove.
\end{proof}

We are ready to prove~\Cref{thm:JM-con}.
\begin{proof}[Proof of~\Cref{thm:JM-con}] Let $R_1$ and $R_2$ be isolated cDV
  singularities with crepant resolutions
  \[
    p_1\colon X_1\to \Spec*{R_1}\qquad\text{and}\qquad p_2\colon X_2\to
    \Spec*{R_2}
  \]
  whose corresponding contraction algebras $\Lambda_1=\Con*(p_1)$ and
  $\Lambda_2=\Con*(p_2)$ are isomorphic. We need to prove that the $2\ZZ$-derived
  contraction algebras $\DerCon_1=\DerCon*(p_1)$ and $\DerCon_2=\DerCon*(p_2)$
  are quasi-isomorphic. Recall that
  \[
    \dgH{\DerCon_1}\cong\Lambda_1[\imath^{\pm1}]\qquad\text{and}\qquad\dgH{\DerCon_2}\cong\Lambda_2[\imath^{\pm1}],\qquad |\imath|=-2.
  \]
  In view of the assumption that $\Lambda_1\cong\Lambda_2$, we obtain a chain of
  isomorphisms of graded algebras
  \[
    \dgH{\DerCon_1}\cong\Lambda_1[\imath^{\pm1}]\cong\Lambda_2[\imath^{\pm1}]\cong\dgH{\DerCon_2}.
  \]
  In particular, we may and we will identify the underlying graded algebra of minimal models of $\DerCon_1$ and
  $\DerCon_2$ via the above isomorphism. For simplicity, let
  $\Lambda=\Lambda_1\cong\Lambda_2$. \Cref{cor:JM-rUMP_unit} shows that the
  restricted universal Massey products
  \[
    j^*\!\class{m_4^{\DerCon_1}}\in\HH[4][-2]{\Lambda}[\Lambda[\imath^{\pm1}]]\qquad\text{and}\qquad
    j^*\!\class{m_4^{\DerCon_2}}\in\HH[4][-2]{\Lambda}[\Lambda[\imath^{\pm1}]]
  \]
  are units in the Hochschild--Tate cohomology
  $\TateHH{\Lambda}[\Lambda[\imath^{\pm1}]]$. \Cref{coro:rUMP_determines} implies that
  the DG algebras $\DerCon_1$ and $\DerCon_2$ are quasi-isomorphic, which is
  what we needed to prove.
\end{proof}

We also have the following important corollary.

\begin{corollary}
  \label{coro:dgSingCat_unique}
  Let $R$ be an isolated cDV singularity that admits a crepant resolution. Then,
  the singularity category $\SingCat{R}$ admits a unique DG enhancement in the
  sense of~\cite{BK90}.
\end{corollary}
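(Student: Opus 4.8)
The statement to prove is that, for an isolated cDV singularity $R$ admitting a crepant resolution, the singularity category $\SingCat{R}$ has a unique DG enhancement. The natural strategy is to reduce the uniqueness of DG enhancements to the uniqueness — up to quasi-isomorphism — of the $2\ZZ$-derived contraction algebra $\DerCon$, which is exactly the content of \Cref{thm:JM-con} combined with the preceding discussion. First I would recall the general fact that, since $\SingCat{R}\simeq\DerCat[c]{A}$ for \emph{some} DG algebra $A$ (the triangulated category is algebraic and admits a classical generator, namely any $2\ZZ$-cluster tilting object $T$), a DG enhancement of $\SingCat{R}$ is the same datum as a DG algebra $A$ together with an equivalence $\DerCat[c]{A}\simeq\SingCat{R}$ sending $A$ to $T$; indeed, the enhancement is determined by the DG endomorphism algebra of a lift of $T$, and conversely any such DG algebra $A$ gives rise to an enhancement via $\DerCat[c]{A}_{\mathrm{dg}}$. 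Thus two enhancements $\mathcal{A}$ and $\mathcal{B}$ yield DG algebras $A=\REnd{T_{\mathcal A}}$ and $B=\REnd{T_{\mathcal B}}$ with $\dgH{A}\cong\dgH{B}\cong\sEnd[R]{T}[\imath^{\pm1}]=\Lambda[\imath^{\pm1}]$ as graded algebras, where the $2$-periodicity of the singularity category (Eisenbud) supplies the $\imath^{\pm1}$.

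\emph{Key steps, in order.} (1) Translate ``DG enhancement of $\SingCat{R}$'' into ``DG algebra model of $\REnd{T}$'' as above; here one uses that $T$ is a classical generator so that $\DerCat[c]{A}_{\mathrm{dg}}\simeq\dgSingCat{R}$ and the enhancement is recovered from the DG endomorphism algebra of the generator, together with the fact that for DG algebras $A,B$ one has $\DerCat[c]{A}_{\mathrm{dg}}\simeq\DerCat[c]{B}_{\mathrm{dg}}$ (as enhancements of the given triangulated category, fixing the image of the generator) if and only if $A$ and $B$ are quasi-isomorphic as DG algebras. (2) Observe that any such $A$ is, by construction, the derived endomorphism algebra of a $2\ZZ$-cluster tilting object in $\DerCat[c]{A}\simeq\SingCat{R}$, so it satisfies the hypotheses that make \Cref{thm:JM-rUMP_unit} and hence \Cref{cor:JM-rUMP_unit} applicable: its restricted universal Massey product $j^*\!\class{m_4^A}$ is a unit in $\TateHH{\Lambda}[\Lambda[\imath^{\pm1}]]$. (3) Apply \Cref{coro:rUMP_determines}: any DG algebra $A$ with $\dgH{A}=\Lambda[\imath^{\pm1}]$ and with $j^*\!\class{m_4^A}$ a unit is quasi-isomorphic to $\DerCon=\DerCon*(p)$. (4) Conclude that all such $A$ are quasi-isomorphic to one another, hence all DG enhancements of $\SingCat{R}$ are equivalent, i.e. the enhancement is unique in the sense of~\cite{BK90}.

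\emph{Main obstacle.} The substantive input — that a priori every DG algebra model $A$ has \emph{unit} restricted universal Massey product — is precisely \Cref{thm:JM-rUMP_unit}/\Cref{cor:JM-rUMP_unit}, whose proof (the $4$-angulated-category argument sketched in \Cref{rmk:thm:JM-rUMP_unit}) is the deep part; within the present section that is already available, so here the only genuine care needed is the bookkeeping in step (1): making precise the dictionary between DG enhancements of a triangulated category with a distinguished classical generator and DG algebras up to quasi-isomorphism, and checking that the generator-preserving condition is automatic (any equivalence can be composed with an autoequivalence permuting cluster-tilting objects, but one must argue that it suffices to match $T$, or rather that the enhancement is intrinsic). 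Concretely, I would invoke the standard fact (e.g.\ as in~\cite{Kel94}) that an algebraic triangulated category with a classical generator $G$ has its DG enhancement determined up to quasi-equivalence by the DG algebra $\REnd{\tilde G}$ for any lift $\tilde G$ of $G$, and that changing the lift changes this DG algebra only up to quasi-isomorphism; everything else is then a formal consequence of \Cref{coro:rUMP_determines} applied to $\DerCon*(p)$ for a fixed crepant resolution $p$.

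\begin{proof}
  Fix a crepant resolution $p\colon X\to\Spec*{R}$ and let
  $N=N(p)\in\SingCat{R}$ be the corresponding $2\ZZ$-cluster tilting object, so
  that $\DerCon=\DerCon*(p)=\REnd{N}$ and $\dgH{\DerCon}\cong\Lambda[\imath^{\pm1}]$
  with $\Lambda=\sEnd[R]{N}$. Since $N$ is a classical generator of
  $\SingCat{R}$, any DG enhancement $\mathcal{A}$ of $\SingCat{R}$ determines a
  DG algebra $A\coloneqq\REnd{\tilde N_{\mathcal A}}$, where $\tilde N_{\mathcal
    A}$ is a lift of $N$ to $\mathcal{A}$; by the standard theory of algebraic
  triangulated categories with a classical generator~\cite{Kel94}, the
  assignment $\mathcal{A}\mapsto A$ is well defined up to quasi-isomorphism of
  DG algebras, the enhancement $\mathcal{A}$ is recovered from $A$ as (a full DG
  subcategory of) $\DerCat[c]{A}_{\mathrm{dg}}$, and two enhancements are
  equivalent in the sense of~\cite{BK90} if and only if the corresponding DG
  algebras are quasi-isomorphic. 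Moreover $\dgH{A}\cong\Lambda[\imath^{\pm1}]$
  as graded algebras, the $2$-periodicity being supplied by Eisenbud's theorem.

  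By construction, $A$ is the derived endomorphism algebra of a $2\ZZ$-cluster
  tilting object in $\DerCat[c]{A}\simeq\SingCat{R}$; hence
  \Cref{thm:JM-rUMP_unit} applies to $A$ and, by \Cref{cor:JM-rUMP_unit}, the
  restricted universal Massey product $j^*\!\class{m_4^A}$ is a unit in
  $\TateHH{\Lambda}[\Lambda[\imath^{\pm1}]]$. \Cref{coro:rUMP_determines} then
  yields a quasi-isomorphism $A\simeq\DerCon$. Consequently, any two DG
  enhancements of $\SingCat{R}$ give rise to DG algebras each quasi-isomorphic to
  $\DerCon$, hence quasi-isomorphic to each other, so the two enhancements are
  equivalent. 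Therefore $\SingCat{R}$ admits a unique DG enhancement.
\end{proof}
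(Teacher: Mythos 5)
Your proof is correct and follows essentially the same route as the paper: compute the derived endomorphism algebra $A$ of the $2\ZZ$-cluster tilting object inside the given enhancement, note that $\dgH{A}\cong\Lambda[\imath^{\pm1}]$ and that \Cref{thm:JM-rUMP_unit} and \Cref{cor:JM-rUMP_unit} apply because they only use the $2\ZZ$-cluster tilting property, then conclude via \Cref{coro:rUMP_determines} that $A$ is quasi-isomorphic to $\DerCon$ and hence that the enhancement is quasi-equivalent to $\DerCat[c]{\DerCon}_{\mathrm{dg}}\simeq\dgSingCat{R}$. The only difference is your more explicit bookkeeping of the dictionary between enhancements and generators' DG endomorphism algebras, which the paper treats more briskly; note that for the argument only the implication ``quasi-isomorphic DG algebras $\Rightarrow$ equivalent enhancements'' is needed, not the converse you assert.
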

\begin{proof}
  Let $\A$ be a DG enhancement of $\SingCat{R}$. By definition, this means that
  $\A$ is a pre-triangulated DG category and there exists an equivalence of
  triangulated categories
  \[
    \dgH[0]{\A}\simeq\SingCat{R}.
  \]
  Let $T\in\SingCat{R}$ be a $2\ZZ$-cluster tilting object and $A$ the
  derived endomorphism algebra of $T$ computed by means of the DG enhancement
  $\A$. In particular $\DerCat[c]{A}_{\mathrm{dg}}\simeq\A$ since $T$ is a classical generator. By definition, $\dgH{A}\cong\dgH{\DerCon}$ where $\DerCon$ is the derived
  endomorphism algebra of $T$ computed by means of the canonical DG enhancement
  of $\SingCat{R}$. The proofs of~\Cref{thm:JM-rUMP_unit} and \Cref{cor:JM-rUMP_unit} only rely on the fact
  that $T\in\SingCat{R}$ is a $2\ZZ$-cluster tilting object,
  see~\Cref{rmk:thm:JM-rUMP_unit}. Consequently, the restricted universal Massey
  product $j^*\!\class{m_4^A}$ is a unit in $\TateHH{\Lambda}[\Lambda[\imath^{\pm1}]]$
  and, by \Cref{coro:rUMP_determines}, the DG algebras $A$ and $\DerCon$ are
  quasi-isomorphic. Therefore, the DG categories
  \[
    \A\simeq\DerCat[c]{A}_{\mathrm{dg}}\qquad\text{and}\qquad\dgSingCat{R}
  \]
  are quasi-equivalent. This shows that every DG enhancement of $\SingCat{R}$ is
  equivalent to the canonical DG enhancement and the claim follows.
\end{proof}

\begin{remark}
  \Cref{coro:dgSingCat_unique} is stronger that \Cref{conj:DW}, as it shows that
  the DG category $\dgSingCat{R}$ is determined by $\Con*$ up to isomorphism in
  $\Hmo$, the Morita category of small DG categories~\cite{Tab05}.
\end{remark}

\section{Concluding remarks}

In this section we collect observations, some of which follow easily from the results in the previous sections.

\subsection{Formality of contraction algebras}

Recall that a DG algebra $A$ is \emph{formal} if there is a quasi-isomorphism
$A\simeq\dgH{A}$, where the graded algebra $\dgH{A}$ is viewed as a DG algebra
with vanishing differential. We observe that $2\ZZ$-derived contraction algebras are
almost never formal.\footnote{This fact will come as no surprise to the experts,
  but we did not find a proof of it in the literature.}

\begin{theorem}
  Let $R$ be an isolated cDV singularity with a crepant resolution and $\DerCon*$ a $2\ZZ$-derived contraction
  algebra for $R$. The following statements are equivalent:
  \begin{enumerate}
    \item\label{it:DerCon_isFormal} The $2\ZZ$-derived contraction algebra $\DerCon*$ is
          formal.
    \item\label{it:Con_isC} There is an isomorphism of algebras $\Con*\cong\CC$.
    \item\label{it:R_isAtiyahFlop} There is an isomorphism of algebras
          \[
            R\cong\CC\llbracket x,y,z,t\rrbracket/(xy-zt),
          \]
          so that $\Spec*{R}$ is the base of the Atiyah flop~\cite{Ati58}.
  \end{enumerate}
  If the above equivalent conditions hold, then there is a quasi-isomorphism
  \[
    \DerCon*\simeq\CC[\imath^{\pm1}],\qquad |\imath|=-2,
  \]
  where the graded algebra $\CC[\imath^{\pm1}]$ is equipped with the trivial
  differential.
\end{theorem}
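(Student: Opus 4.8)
The plan is to prove that \eqref{it:DerCon_isFormal} implies \eqref{it:Con_isC} and conversely, and separately that \eqref{it:Con_isC} is equivalent to \eqref{it:R_isAtiyahFlop}; the last displayed quasi-isomorphism will drop out of the proof that \eqref{it:Con_isC} implies \eqref{it:DerCon_isFormal}. The crux is the implication \eqref{it:DerCon_isFormal} $\Rightarrow$ \eqref{it:Con_isC}. If $\DerCon*$ is formal, then $(\Lambda[\imath^{\pm1}],0,0,\dots)$ is a minimal $A_\infty$-model for it, so the universal Massey product $\class{m_4^{\DerCon*}}$ vanishes and hence so does its restriction $j^*\!\class{m_4^{\DerCon*}}\in\TateHH{\Lambda}[\Lambda[\imath^{\pm1}]]$. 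But by \Cref{cor:JM-rUMP_unit} this class is a unit in the bigraded Hochschild--Tate cohomology ring; since $0$ can be a unit only in the zero ring, we obtain $\TateHH{\Lambda}[\Lambda[\imath^{\pm1}]]=0$ and therefore $\uTateHH{\Lambda}[\Lambda]=0$ by \Cref{prop:HH_isos}\,\eqref{it:TateHH_iso}. In particular the identity of $\Lambda=\Con*$ vanishes in the stable category of $\Lambda$-bimodules, i.e.\ $\Lambda$ is projective as a $\Lambda$-bimodule, i.e.\ $\Lambda$ is a separable $\CC$-algebra; being finite-dimensional over the algebraically closed field $\CC$ it is a finite product of matrix algebras over $\CC$, and since $\Con*$ is basic and connected this forces $\Con*\cong\CC$.

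For the converse \eqref{it:Con_isC} $\Rightarrow$ \eqref{it:DerCon_isFormal} I would run this argument backwards. If $\Lambda=\Con*\cong\CC$, then $\CC$ being separable the stable category of $\CC$-bimodules is trivial, so $\uTateHH{\Lambda}[\Lambda]=0$ and hence $\TateHH{\Lambda}[\Lambda[\imath^{\pm1}]]=0$ by \Cref{prop:HH_isos}\,\eqref{it:TateHH_iso}. Consequently the graded algebra $\CC[\imath^{\pm1}]=\Lambda[\imath^{\pm1}]$, regarded as a DG algebra with trivial differential, satisfies the hypothesis of \Cref{coro:rUMP_determines} vacuously---its restricted universal Massey product lies in the zero ring and so is a unit---whence $\CC[\imath^{\pm1}]\simeq\DerCon*$. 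This shows at once that $\DerCon*$ is formal and that $\DerCon*\simeq\CC[\imath^{\pm1}]$ with trivial differential, which is the final assertion of the theorem.

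It remains to relate \eqref{it:R_isAtiyahFlop} to \eqref{it:Con_isC}. For \eqref{it:R_isAtiyahFlop} $\Rightarrow$ \eqref{it:Con_isC} I would recall the classical computation that a crepant resolution of the base of the Atiyah flop contracts a single rigid $(-1,-1)$-curve, whose simultaneous non-commutative deformations are trivial, so that the associated contraction algebra is $\CC$; see~\cite{DW16}. Conversely, if $\Con*(p)\cong\CC$, then $\Con*(p)$ is isomorphic---hence derived equivalent---to the contraction algebra of the Atiyah flop, so \Cref{conj:DW} (established above) yields an isomorphism of algebras $R\cong\CC\llbracket x,y,z,t\rrbracket/(xy-zt)$; alternatively, one may combine the implication \eqref{it:Con_isC} $\Rightarrow$ \eqref{it:DerCon_isFormal} just proven with \Cref{cor:HK}.

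The step I expect to be the main obstacle is \eqref{it:DerCon_isFormal} $\Rightarrow$ \eqref{it:Con_isC}: granting \Cref{cor:JM-rUMP_unit} and \Cref{coro:rUMP_determines} the argument is short, but it hinges on the slightly delicate point that a Hochschild--Tate cohomology ring in which $0$ is a unit must be the zero ring, which forces the symmetric algebra $\Lambda$ to be separable and hence---being basic and connected over $\CC$---equal to $\CC$. The remaining ingredients, namely the computation of the contraction algebra of the Atiyah flop and the already-established Donovan--Wemyss Conjecture, are standard.
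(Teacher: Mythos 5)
Your proposal is correct and follows essentially the same route as the paper: the key implication \eqref{it:DerCon_isFormal}$\Rightarrow$\eqref{it:Con_isC} via vanishing of the restricted universal Massey product forcing $\Lambda$ to be separable (you phrase this through \Cref{cor:JM-rUMP_unit} and the collapse of $\TateHH{\Lambda}[\Lambda[\imath^{\pm1}]]$, the paper through \Cref{thm:JM-rUMP_unit} directly, which is the same point), the converse via the uniqueness results (your use of \Cref{coro:rUMP_determines} is exactly the alternative the paper records in a footnote to its Kadeishvili argument), and \eqref{it:Con_isC}$\Leftrightarrow$\eqref{it:R_isAtiyahFlop} via the established \Cref{conj:DW} plus the computation of the Atiyah flop's contraction algebra.
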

\begin{proof}
  \eqref{it:DerCon_isFormal}$\Rightarrow$\eqref{it:Con_isC} If the $2\ZZ$-derived
  contraction algebra $\DerCon*$ is formal, then its restricted universal Massey
  product $j^*\!\class{m_4}$ is represented by the trivial sequence in
  $\HH[4][-2]{\Con*}[\Con*[\imath^{\pm1}]]=\Ext[\Con*^e]{\Con*}{\Con*}[4]$. In view of
  \Cref{thm:JM-rUMP_unit}, this can only happen if $\Con*$ is projective as a
  $\Con*$-bimodule or, equivalently, if the algebra $\Con*$ is semisimple.
  Since contraction algebras are basic and connected, we must have an
  isomorphism of algebras $\Con*\cong\CC$, which is what we needed to prove.

  \eqref{it:Con_isC}$\Rightarrow$\eqref{it:DerCon_isFormal} If $\Con*\cong\CC$,
  then there is an isomorphism of graded algebras
  \[
    \dgH{\DerCon*}\cong\Con*[\imath^{\pm1}]\cong\CC[\imath^{\pm1}],\qquad
    |\imath|=-2.
  \]
  It is well-known (and easy to prove using Kadeishvili's Theorem~\cite{Kad88},
  for example) that the latter graded algebra is intrinsically formal, that is
  every DG algebra with cohomology algebra $\CC[\imath^{\pm1}]$ is formal. In
  particular, $\DerCon*$ is formal.\footnote{Alternatively, one can prove this
    fact using the results in this note as follows: Since the enveloping algebra
    \[
      \Con*^e\cong\CC\otimes_\CC\CC\cong\CC
    \]
    is semisimple, the Hochschild--Tate cohomology
    $\TateHH{\Con*}[\Con*[\imath^{\pm1}]]$ vanishes in positive Hochschild degrees. Then,
    by~\Cref{thm:rUMP_ThmB}, the $2\ZZ$-derived contraction algebra $\DerCon*$ is
    quasi-isomorphic to its cohomology algebra $\dgH{\DerCon*}$ for the
    condition on the agreement of the corresponding restricted universal Massey products is
    trivially satisfied. Of course, this is essentially the same proof as the
    one using Kadeishvili's Theorem, which is indeed a special case
    of~\cite[Thm.~B]{JKM22}.}

  \eqref{it:Con_isC}$\Leftrightarrow$\eqref{it:R_isAtiyahFlop} In view of the
  validity of \Cref{conj:DW}, it is enough to observe that $\CC$ is indeed
  isomorphic to the contraction algebra of the Atiyah flop~\cite[Table~2]{DW16}.
\end{proof}

\begin{remark}
  The (non-)formality of the derived contraction algebra $\DerCon*^{\leq0}$ is investigated
  in~\cite[Ch.~9]{Boo19} where minimal models of this DG algebra are computed in
  various examples, see also \cite{Boo18,Boo21,Boo22}.
\end{remark}

\begin{remark}
  More generally, Kadeishvili's Theorem~\cite{Kad88} can be used to prove that the Laurent polynomial algebra
  $K[\imath^{\pm1}]=K\otimes_\CC\CC[\imath^{\pm1}]$ is intrinsically formal if $K$ is
  a finite-dimensional algebra of projective dimension at most $2$ as a
  $K$-bimodule~\cite[Cor.~4.2]{Sai23}. Such an algebra $K$, however, cannot be a contraction algebra unless $K=\CC$ since contraction algebras are basic and connected, and a symmetric $\CC$-algebra has finite projective dimension as a bimodule over itself if and only if it is separable (semisimple).
\end{remark}

\subsection{Relationship to the Derived Auslander--Iyama Correspondence}
\label{sec:der_AIC}

Let $\T$ be a triangulated category whose underlying additive category is
Krull--Schmidt and has finite-dimensional morphism spaces. A basic object
$T\in\T$ is \emph{$d$-cluster tilting}, $d\geq1$, if the following conditions
are satisfied \cite{IY08,Bel15}:
\begin{itemize}
  \item The object $T$ is \emph{$d$-rigid}: $\T(T,T[i])=0$ for all $0<i<d$.
  \item For each object $X\in\T$ there exists a diagram
        \begin{center}
          \begin{tikzcd}[column sep=tiny, row sep=small]
            &T_{d-2}\drar&\cdots&&&T_1\drar\ar{rr}&&T_0\drar[end anchor=north west]\\
            T_{d-1}\urar&&X_{d-2}\ar{ll}[description]{+1}&\cdots&X_2\urar&&X_1\urar\ar{ll}[description]{+1}&&X\ar{ll}[description]{+1}
          \end{tikzcd}
        \end{center}
\end{itemize}
in which $T_i\in\add*{T}$, $0\leq i<d$, the oriented triangles denote exact
triangles in $\T$ and the unoriented triangles commute.
The object $T$ is \emph{$d\ZZ$-cluster tilting} if it is $d$-cluster tilting
and $T\cong T[d]$.
\Cref{thm:JM-con} is a special case of the theorem below. For a
finite-dimensional algebra $\Lambda$, we let $\proj*{\Lambda}$ be the category
of finite-dimensional projective $\Lambda$-modules. For example, if $\Lambda=\T(T,T)$, then
the Yoneda functor
\[
  \T\supseteq\add*{T}\stackrel{\sim}{\longrightarrow}\proj*{\Lambda},\qquad X\longmapsto\T(T,X),
\]
is an equivalence (of additive categories).

\begin{theorem}[Derived Auslander--Iyama Correspondence
      {\cite[Thm.~5.1.10]{JKM22}}]
  \label{thm:derived_dZ-Auslander_correspondence}
  Let $d\geq1$. There is a bijective correspondence between the following:
  \begin{enumerate}
    \item\label{it:dZ-CT_DGAs} Quasi-isomorphism classes of DG algebras $A$ that
          satisfy the following:
          \begin{itemize}
            \item The algebra $\dgH[0]{A}$ is a basic finite-dimensional algebra.
            \item The free DG $A$-module $A\in\DerCat[c]{A}$ is a $d\ZZ$-cluster tilting
                  object.
          \end{itemize}
    \item\label{it:Lambda_pair} Equivalence classes of pairs $(\Lambda,I)$
          consisting of
          \begin{itemize}
            \item a basic finite-dimensional self-injective algebra $\Lambda$ and
            \item an invertible $\Lambda$-bimodule $I$ such that
                  $\Omega_{\Lambda^e}^{d+2}(\Lambda)\cong I$ in the stable category of
                  $\Lambda$-bimodules.
          \end{itemize}
  \end{enumerate}
  The correspondence is given by the formula $A\mapsto(\dgH[0]{A},\dgH[-d]{A})$.
\end{theorem}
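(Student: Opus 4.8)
We only outline the argument; the (considerable) details are in~\cite{JKM22}. The plan is to run the arguments of the previous two sections---carried out there in the case $d=2$, $I=\Lambda$ (so that $\Lambda$ is symmetric and $2$-periodic) relevant to \Cref{conj:DW}---with the integer $2$ and the diagonal bimodule replaced throughout by an arbitrary $d\geq1$ and an arbitrary invertible $\Lambda$-bimodule $I$, and to check that the resulting assignment $A\mapsto(\dgH[0]{A},\dgH[-d]{A})$ is a well-defined bijection.

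\emph{Step 1: the assignment is well defined.} Let $A$ be as in~\eqref{it:dZ-CT_DGAs} and put $\Lambda\coloneqq\dgH[0]{A}$. The isomorphism $A\cong A[d]$ in $\DerCat[c]{A}$ together with $d$-rigidity gives $\dgH[i]{A}\cong\dgH[i-d]{A}$ for all $i$ and $\dgH[i]{A}=0$ for $0<i<d$, hence $\dgH[i]{A}=0$ whenever $i\not\equiv0\pmod d$; moreover it exhibits an invertible homogeneous element of degree $d$ in the graded algebra $\dgH{A}$, so $I\coloneqq\dgH[-d]{A}$ is an invertible $\Lambda$-bimodule (with inverse $\dgH[d]{A}$) and
\[
  \dgH{A}\cong\Pi\coloneqq\bigoplus_{k\in\ZZ}I^{\otimes_\Lambda k},
\]
with $I^{\otimes_\Lambda k}$ in cohomological degree $-kd$ and $I^{\otimes_\Lambda(-k)}\coloneqq(I^{-1})^{\otimes_\Lambda k}$ for $k\geq0$. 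The $d\ZZ$-cluster tilting hypothesis further forces $\Lambda$ to be self-injective (this is part of the general setup of~\cite{JKM22}), and $\add*{A}\simeq\proj*{\Lambda}$ is a $d$-cluster tilting subcategory of $\DerCat[c]{A}$ closed under $[d]$, hence carries a $(d+2)$-angulated structure in the sense of Geiss--Keller--Oppermann~\cite{GKO13}. Running the analysis of \Cref{rmk:thm:JM-rUMP_unit}, with $4$ replaced systematically by $d+2$, then shows that the restricted universal Massey product $j^*\!\class{m_{d+2}^A}\in\Ext[\Lambda^e]{\Lambda}{I}[d+2]$ of a minimal $A_\infty$-model of $A$ is represented by an exact sequence $0\to I\to P_{d+1}\to\cdots\to P_0\to\Lambda\to0$ with projective middle terms---the Auslander--Reiten criterion~\cite{AR91a} being the essential input. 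In particular $\Omega_{\Lambda^e}^{d+2}(\Lambda)\cong I$ in the stable category of $\Lambda$-bimodules, so $(\dgH[0]{A},\dgH[-d]{A})=(\Lambda,I)$ is a pair as in~\eqref{it:Lambda_pair}.

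\emph{Step 2: injectivity.} If $A$ and $A'$ give rise to the same pair $(\Lambda,I)$, then $\dgH{A}\cong\dgH{A'}\cong\Pi$ as graded algebras, and by Step~1 both restricted universal Massey products are units of the Hochschild--Tate cohomology $\TateHH{\Lambda}[\Pi]$ of bidegree $(d+2,-d)$ (the analogue of \Cref{cor:JM-rUMP_unit}, via~\cite[Prop.~5.7 and Rmk.~5.8]{Mur22}). The group $Z(\Lambda)^\times$ acts transitively on these units---one lifts stable automorphisms $\Lambda\simeq\Lambda$ to honest ones by~\cite[Cor.~2.3]{Che21}, exactly as in the proof of \Cref{coro:rUMP_determines}---so, after replacing a minimal model of $A'$ by its image under the corresponding graded-algebra automorphism of $\Pi$, we may assume $j^*\!\class{m_{d+2}^{A'}}=j^*\!\class{m_{d+2}^A}$. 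The generalization of \Cref{thm:rUMP_ThmB}, whose proof transcribes the inductive obstruction computation of this note almost verbatim, then produces a quasi-isomorphism $A\simeq A'$.

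\emph{Step 3: surjectivity, and the main obstacle.} Given a pair $(\Lambda,I)$ as in~\eqref{it:Lambda_pair}, the periodicity isomorphism provides a class $u\in\Ext[\Lambda^e]{\Lambda}{I}[d+2]$ which is a unit in the Hochschild--Tate cohomology $\TateHH{\Lambda}[\Pi]$. One computes the Hochschild cohomology of $\Pi$ in terms of that of $\Lambda$ and a fractional Euler class exactly as in \Cref{prop:HH_isos} (cf.~\cite[Prop.~4.6.9]{JKM22}), whence the analogues of \Cref{cor:two_equations_determine,lem:contractible,prop:periodicity}; feeding these into the obstruction theory for $A_\infty$-structures of~\cite{Mur20b} (cf.~the proof of~\cite[Thm.~B]{JKM22}) yields a minimal $A_\infty$-algebra $(\Pi,m_{d+2},m_{d+4},\dots)$ whose restricted universal Massey product $j^*\!\class{m_{d+2}}$ equals $u$; any DG model $A$ of it (\Cref{rmk:DG_vs_A-infinity}) then satisfies $\dgH[0]{A}\cong\Lambda$ and $\dgH[-d]{A}\cong I$. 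What remains---and this is where I expect the real difficulty to lie---is to verify that $A\in\DerCat[c]{A}$ is $d\ZZ$-cluster tilting. The plan is to show that the candidate $(d+2)$-angles on $\proj*{\Lambda}\simeq\add*{A}$ built from $m_{d+2}$ form a $(d+2)$-angulation and realise $\add*{A}$ as a $d$-cluster tilting subcategory of $\DerCat[c]{A}$ closed under $[d]$---i.e.\ that $\square_{j^*\!\class{m_{d+2}}}=\square_{\mathrm{GKO}}$ in the notation of \Cref{rmk:thm:JM-rUMP_unit}, now established in the direction converse to \Cref{thm:JM-rUMP_unit}. This matching is the delicate comparison between higher Toda brackets, Massey products and these two classes of $(d+2)$-angles, and is the technical heart of the theorem. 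Once it is available $A$ is $d\ZZ$-cluster tilting, so the assignment is surjective; together with Step~2 this accounts for both round trips, so the correspondence $A\mapsto(\dgH[0]{A},\dgH[-d]{A})$ is a bijection.
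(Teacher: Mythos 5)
There is nothing in the paper to check your argument against: the note quotes \Cref{thm:derived_dZ-Auslander_correspondence} verbatim from \cite[Thm.~5.1.10]{JKM22} and offers no proof of it; what it does prove in detail (\Cref{thm:JM-rUMP_unit}, \Cref{cor:JM-rUMP_unit}, \Cref{thm:rUMP_ThmB}, \Cref{coro:rUMP_determines}) is exactly the \emph{injectivity} half of the correspondence in the special case $d=2$, $I=\Lambda$. Measured against that material and against the strategy sketched in \Cref{rmk:thm:JM-rUMP_unit}, your outline is faithful: Step~1 generalises \Cref{thm:JM-rUMP_unit} and \Cref{cor:JM-rUMP_unit}, Step~2 generalises \Cref{thm:rUMP_ThmB} and \Cref{coro:rUMP_determines} (including the $Z(\Lambda)^\times$-action and the lifting via \cite[Cor.~2.3]{Che21}), and Step~3 correctly locates the surjectivity---absent from the note because it is not needed for \Cref{thm:JM-con}---in the converse direction of \cite[Cor.~4.5.17]{JKM22}, i.e.\ in showing $\square_{j^*\!\class{m_{d+2}}}=\square_{\mathrm{GKO}}$ so that the constructed $A$ is $d\ZZ$-cluster tilting.

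Two caveats. First, in Step~1 the self-injectivity of $\Lambda=\dgH[0]{A}$ is not ``part of the general setup'' of \cite{JKM22}: it is a non-trivial \emph{consequence} of the $d\ZZ$-cluster tilting hypothesis (extracted there from the induced $(d+2)$-angulated structure on $\add*{A}\simeq\proj*{\Lambda}$ and Auslander--Reiten theory), and it must be in place before the criterion of \cite{AR91a} can even be stated, since condition~\eqref{it:Lambda_pair} presupposes it. Second, and more importantly, what you have written is a programme rather than a proof: every step carrying real content---the Toda-bracket/Massey-product comparison behind $\square_{\mathrm{GKO}}=\square_{j^*\!\class{m_{d+2}}}$, the vanishing of the higher obstructions in Step~3, and the bookkeeping needed to replace the diagonal bimodule by a general invertible $I$ (which changes the coefficients in all the Hochschild computations of \Cref{prop:HH_isos}--\Cref{prop:periodicity} and introduces signs and twists when $d$ is odd)---is deferred to \cite{JKM22}. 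As an account of how the theorem is proved it is accurate; as a proof it establishes nothing beyond what the citation already provides, which is also all the paper itself provides.
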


\begin{remark}
  The case $d=1$ of \Cref{thm:derived_dZ-Auslander_correspondence} is one way
  to formulate the main result in~\cite{Mur22}. Indeed, an object $T\in\T$ is
  $1$-cluster tilting if and only if it is $1\ZZ$-cluster tilting if and only if
  $\add*{T}=\T$; the latter condition means that $\T$ is a triangulated category
  of finite type in the terminology of~\cite{Mur22}.
\end{remark}

\begin{remark}
  Let $(\Lambda,I)$ be a pair as in
  \Cref{thm:derived_dZ-Auslander_correspondence}\eqref{it:Lambda_pair}. Since
  the algebra $\Lambda$ is assumed to be basic, the map
  \[
    \Aut*{\Lambda}\longrightarrow\Pic(\Lambda),\qquad\sigma\longmapsto[\twBim{\Lambda}[\sigma]]
  \]
  from the group of algebra automorphisms of $\Lambda$ to the Picard group of
  invertible $\Lambda$-bimodules induces an isomorphism of groups~\cite[Prop.~3.8]{Bol84}
  \[
    \Out*{\Lambda}\stackrel{\sim}{\longrightarrow}\Pic(\Lambda),\qquad[\sigma]\longmapsto[\twBim{\Lambda}[\sigma]],
  \]
  where $\Out*{\Lambda}=\Aut*{\Lambda}/\operatorname{Inn}(\Lambda)$ is the group
  of outer automorphisms of $\Lambda$. In particular, there exists
  $\sigma\in\Aut(\Lambda)$ such that $I\cong\twBim{\Lambda}[\sigma]$ as
  $\Lambda$-bimodules. The condition
  $\Omega_{\Lambda^e}^{d+2}(\Lambda)\simeq\twBim{\Lambda}[\sigma]$ expresses the
  fact that the algebra $\Lambda$ is \emph{twisted $(d+2)$-periodic with respect
    to $\sigma$}. When $\sigma=\id$ or, equivalently, $I\cong\Lambda$, the
  algebra $\Lambda$ is said to be \emph{$(d+2)$-periodic}. For example,
  contraction algebras are known to be $4$-periodic. We refer the reader
  to~\cite{ES08} and the references therein for information on (twisted)
  periodic algebras.
\end{remark}

\Cref{thm:JM-con} follows from the \emph{injectivity} of the correspondence
in~\Cref{thm:derived_dZ-Auslander_correspondence} with $d=2$; the proof
of~\Cref{thm:JM-con} outlined in this note effectively goes through the proof of
the latter in the special case of $2\ZZ$-derived contraction
algebras. We record the resulting characterisation of contraction algebras for
the sake of completeness.

\begin{theorem}
  \label{thm:cDV-2Z-CT}
  Let $R$ be an isolated cDV singularity with a crepant resolution $p\colon
    X\to\Spec*{R}$. Up to quasi-isomorphism, the $2\ZZ$-derived contraction algebra
  $\DerCon=\DerCon*(p)$ is the unique DG algebra with the following properties:
  \begin{enumerate}
    \item $\DerCon\in\DerCat[c]{\DerCon}$ is a $2\ZZ$-cluster tilting object.
    \item There is an isomorphism of algebras
          $\dgH[0]{\DerCon}\cong\Con=\Con*(p)$.
    \item There is an isomorphism of $\Con$-bimodules $\dgH[-2]{\DerCon}\cong\Con$.
  \end{enumerate}
  In other words, $\DerCon$ is determined up to quasi-isomorphism by its image
  $(\Con,\Con)$ under the Derived Auslander--Iyama Correspondence.
\end{theorem}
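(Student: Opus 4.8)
The plan is to derive this statement as a direct corollary of the machinery developed in \Cref{sec:the_proof}, essentially by repackaging \Cref{coro:rUMP_determines} together with the geometric input from \Cref{thm:HK} and \Cref{thm:JM-rUMP_unit}. First I would verify that $\DerCon$ does satisfy the three listed properties: property~(1) is recorded in the paragraph following the definition of the $2\ZZ$-derived contraction algebra (the canonical quasi-equivalence $\DerCat[c]{\DerCon*}_{\mathrm{dg}}\simeq\dgSingCat{R}$ together with the fact that $N$ is $2\ZZ$-cluster tilting); property~(2) is $\dgH[0]{\DerCon}\cong\Con$ by construction; and property~(3) follows because $\dgH{\DerCon}\cong\Con[\imath^{\pm1}]$ with $|\imath|=-2$, so the degree $-2$ component is $\Con\cdot\imath$, which is isomorphic to the diagonal bimodule $\Con$ (this is exactly the identification used just before \Cref{thm:JM-rUMP_unit}).

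Next I would establish uniqueness. Let $A$ be any DG algebra satisfying (1)--(3). From (2) and (3) I obtain an isomorphism of graded algebras $\dgH{A}\cong\Con[\imath^{\pm1}]=\Lambda[\imath^{\pm1}]$: indeed $\dgH[0]{A}\cong\Lambda$ and $\dgH[-2]{A}\cong\Lambda$ as $\Lambda$-bimodules, and since $A\in\DerCat[c]{A}$ is $2\ZZ$-cluster tilting we have $A\cong A[2]$ in $\DerCat[c]{A}$, which forces the $2$-periodicity $\dgH{A}\cong\dgH[0]{A}\otimes_\CC\CC[\imath^{\pm1}]$ as graded algebras (the bimodule generator of $\dgH[-2]{A}$ being invertible and isomorphic to the diagonal, it plays the role of $\imath$). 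So $A$ is one of the DG algebras covered by \Cref{coro:rUMP_determines}. It remains to check that its restricted universal Massey product $j^*\!\class{m_4^A}$ is a unit in $\TateHH{\Lambda}[\Lambda[\imath^{\pm1}]]$. But by property~(1), $A$ is the derived endomorphism algebra of a $2\ZZ$-cluster tilting object in $\DerCat[c]{A}$, so \Cref{thm:JM-rUMP_unit} and \Cref{cor:JM-rUMP_unit} apply verbatim---exactly as in the proof of \Cref{coro:dgSingCat_unique}---to show that $j^*\!\class{m_4^A}$ is a unit. \Cref{coro:rUMP_determines} then gives a quasi-isomorphism $A\simeq\DerCon$, proving uniqueness.

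Finally, the last sentence of the statement---that $\DerCon$ is determined by its image $(\Con,\Con)$ under the Derived Auslander--Iyama Correspondence---is just a reformulation: by \Cref{thm:derived_dZ-Auslander_correspondence} with $d=2$, the correspondence sends $A\mapsto(\dgH[0]{A},\dgH[-2]{A})$, and we have shown that the fibre over the pair $(\Lambda,\Lambda)$ contains (the quasi-isomorphism class of) $\DerCon$ and nothing else among DG algebras satisfying (1). The only subtle point, and the step I expect to require the most care, is confirming that conditions (2) and (3) really do pin down $\dgH{A}$ as a graded \emph{algebra} isomorphic to $\Lambda[\imath^{\pm1}]$---that the invertible bimodule in degree $-2$, being isomorphic to the diagonal bimodule $\Lambda$, forces the multiplication $\dgH[-2]{A}\otimes_\Lambda\dgH[-2]{A}\to\dgH[-4]{A}$ to be an isomorphism and that the resulting graded algebra is the untwisted Laurent extension; here one uses that $\Out(\Lambda)\cong\Pic(\Lambda)$ for basic $\Lambda$ (as in the remark after \Cref{thm:derived_dZ-Auslander_correspondence}), so that the trivial bimodule class corresponds to the trivial automorphism and hence to the untwisted extension $\Lambda[\imath^{\pm1}]$. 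Once this identification is in place, the argument reduces cleanly to \Cref{coro:rUMP_determines}.
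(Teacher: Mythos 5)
Your proposal is correct, but it proves uniqueness by a different (more self-contained) route than the paper. The paper's own proof is very short: after checking properties (1)--(3) for $\DerCon$ exactly as you do, it simply observes that $\DerCon$ therefore lies in the class of DG algebras covered by \Cref{thm:derived_dZ-Auslander_correspondence} with $d=2$, and invokes the \emph{injectivity} of that correspondence as a black box to conclude that $\DerCon$ is the unique such DG algebra with image $(\Con,\Con)$. You instead unfold that injectivity in this special case using the machinery of \Cref{sec:the_proof}: you reduce an arbitrary $A$ satisfying (1)--(3) to the hypotheses of \Cref{coro:rUMP_determines} by (a) identifying $\dgH{A}$ with $\Lambda[\imath^{\pm1}]$ as a graded algebra and (b) deducing from condition~(1), via \Cref{thm:JM-rUMP_unit} and \Cref{cor:JM-rUMP_unit}, that $j^*\!\class{m_4^A}$ is a unit in $\TateHH{\Lambda}[\Lambda[\imath^{\pm1}]]$ --- the same pattern as the proof of \Cref{coro:dgSingCat_unique}. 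This is legitimate (the paper itself remarks that its Section~3 effectively reproduces the proof of the correspondence in this special case), and it buys a proof that stays entirely within the note rather than deferring to \cite[Thm.~5.1.10]{JKM22}; the price is that you must carry out step~(a), which the cited correspondence handles internally. You correctly identify this as the delicate point and your sketch is sound: $A\cong A[2]$ in $\DerCat[c]{A}$ produces an invertible degree $-2$ element $t$ of $\dgH{A}$, so $\dgH[-2]{A}\cong\twBim[1]{\Lambda}[\sigma]$ with $\sigma$ conjugation by $t$, and condition~(3) forces $\sigma$ to be inner, whence replacing $t$ by $u^{-1}t$ for a suitable unit $u\in\Lambda$ yields a central invertible $\imath$ and the untwisted Laurent extension. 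Both arguments are valid; the paper's is shorter because it outsources exactly this bookkeeping to the general correspondence.
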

\begin{proof}
  That the $2\ZZ$-derived contraction algebra satisfies the first two properties
  follows from~\Cref{thm:Wemyss} since, by definition, $\DerCon$ is the derived
  endomorphism algebra of a $2\ZZ$-cluster tilting object in $\SingCat{R}\simeq\DerCat[c]{\DerCon}$. 
  The third property was checked in the computation of the cohomology of the $2\ZZ$-derived contraction algebra.
  Therefore $\DerCon$ belongs to the
  class of DG algebras in
  \Cref{thm:derived_dZ-Auslander_correspondence}\eqref{it:dZ-CT_DGAs} with $d=2$.
  Moreover, $\DerCon$ is determined up to quasi-isomorphism by its image $(\Con,\Con)$
  under the Derived Auslander--Iyama Correspondence, which is what we needed to
  prove.
\end{proof}

\subsection{Isolated cDV singularities with non-smooth minimal models}

Contraction algebras are defined for an arbitrary cDV singularity $R$ that is
neither isolated nor admits a crepant resolution. However, contraction algebras
are finite-dimensional if and only if $R$ defines an isolated
singularity~\cite[Summary~5.6]{DW19a}, and this finite-dimensionality is crucial
to our approach. On the other hand, $R$ admits a crepant resolution if and only
if the singularity category $\SingCat{R}$ admits a $2\ZZ$-cluster tilting
object, see~\cite[Thm.~5.4]{BIKR08} and the references therein. If, on the other
hand, the minimal models\footnote{In the context of the MMP in dimension three
  and higher, minimal models play the role of minimal resolutions of surfaces.
  We do not recall the technical definition in this note, but only mention that
  minimal models are permitted to have `mild' singularities as long as they remain
  `closer' to the original space than a smooth resolution (which always exists by a
  famous theorem of Hironaka~\cite{Hir64}.)} of $R$ are singular, then the contraction algebras of
$R$ are the endomorphism algebras of \emph{maximal rigid objects} in
$\SingCat{R}$~\cite{Wem18}, that is objects $T\in\SingCat{R}$ such that
\[
  \add*{T}=\set{X\in\SingCat{R}}[\Hom{T\oplus X}{(T\oplus X)[1]}=0].
\]
It is easy to verify that $2$-cluster tilting objects are maximal rigid, but the
converse is false in general~\cite{BIKR08,BMV10}; moreover, if there exists a
$2$-cluster tilting object then every maximal rigid object is also $2$-cluster
tilting~\cite[Thm.~II.1.8]{BIRS09}.\footnote{The reader should compare this
  statement with the following geometric fact: If one minimal model of
  $\Spec*{R}$ is smooth, then all of its minimal models are also
  smooth~\cite[Cor.~4.11]{Kol89}.} In any case, one may still define the $2\ZZ$-derived contraction
algebras of $R$ as the derived endomorphism algebras of maximal rigid objects in
$\SingCat{R}$, computed in terms of the canonical DG enhancement of the latter
triangulated category. Note, however, that
\Cref{thm:derived_dZ-Auslander_correspondence} does not cover the case of
maximal rigid objects that are not $2\ZZ$-cluster tilting and hence it cannot be
applied to prove a version of \Cref{thm:cDV-2Z-CT} for isolated cDV
singularities that do not admit a crepant resolution. Finally, we mention that
the apparent variant of \Cref{conj:DW} does not hold for isolated cDV
singularities whose minimal models are not smooth, see \cite[Ex~8.4.2]{Boo21}
for an explicit counterexample.

\bibliographystyle{halpha}%


\bibliography{mylibrary}

\end{document}